\documentclass[11pt,a4paper,twoside]{amsart}

\usepackage{amsmath,amssymb,amsthm}
\usepackage{inputenc}
\usepackage{tikz} 
\usetikzlibrary{arrows,shapes}
\usepackage{xifthen}
\usepackage{float}
\floatstyle{boxed} 
\usepackage{graphicx,subfigure}
\usepackage{caption}
\usepackage[T1]{fontenc} 
\usepackage{enumitem}
\usepackage{float}
\usepackage{mathtools}
 \usepackage{relsize}
\usepackage{mathrsfs}
\usepackage{pgfplots}
\usepackage{xcolor}
\usepackage[colorlinks=true, linkcolor=red, citecolor=blue, urlcolor=blue,hyperfootnotes=false]{hyperref}
\usepackage{cleveref}
\usepackage{etoolbox}
\usepackage{upgreek}



\def\@begintheorem#1#2{\par\bgroup{\sc #1 \ #2. }  \it \\\ignorespace }
\def\@opargbegintheorem#1#2#3{\par\bgroup{\sc #1\ #2 \ (#3).}  \it  \ignorespace}
\def\@endtheorem{\egroup}


\theoremstyle{plain}
\newtheorem{theorem}{Theorem}[section]

\theoremstyle{definition}
\newtheorem{definition}[theorem]{Definition}

\theoremstyle{remark}
\newtheorem{remark}[theorem]{Remark}

\theoremstyle{plain}

\theoremstyle{plain}
\newtheorem{lemma}[theorem]{Lemma}

\theoremstyle{plain}

\theoremstyle{plain}
\newtheorem{proposition}[theorem]{Proposition}

\numberwithin{equation}{section}


\newcommand{\C}{{\mathbb C}}

\newcommand{\D}{\mathcal{D}}
\newcommand{\Z}{{\mathbb Z}}

\newcommand{\R}{{\mathbb R}}

\newcommand{\K}{\mathcal{K}}

\renewcommand{\H}{\mathcal{H}}

\newcommand{\de}{\partial}

\newcommand{\supp}{\operatorname{supp}}

\newcommand {\e}{\`{e}~}

\DeclareMathOperator{\Realpart}{Re}
\renewcommand{\Re}{\Realpart}

{\left\lbrace\begin{array}{@{}l@{}}}%
{\end{array}\right.}

\DeclarePairedDelimiter{\abs}{\lvert}{\rvert}
\DeclarePairedDelimiter{\norm}{\lVert}{\rVert}
\DeclarePairedDelimiter{\seq}{\lbrace}{\rbrace}

\newcommand{\sign}{\mathop{\textrm{sign}}}

\DeclareMathOperator{\Ker}{Ker}
\DeclareMathOperator{\Ran}{Ran}

\newcommand{\bvec}[1]{\boldsymbol{#1}}
\newcommand{\id}{\boldsymbol{1}}
\newcommand{\di}{\,\mathrm{d}}

 \newcommand{\Span}{\operatorname{span}}

\newcommand{\cK}{\mathcal{K}}
\newcommand{\cN}{\mathcal{N}}

\newcommand{\inprodC}[2]{\left\langle #1 , #2\right\rangle_{\C^2}}
\newcommand{\inprod}[2]{\left \langle #1 , #2\right \rangle}

\newcommand{\dotprod}[2]{
  \bvec{#1}\mkern1mu{\cdot}\mkern1mu\bvec{#2} \,}
\newcommand{\sn}{ \bvec \sigma \cdot \bvec n}
\newcommand{\bdry}{\Sigma}


\textwidth 16cm
\hoffset -1cm
\voffset -1cm
\oddsidemargin 1cm
\evensidemargin 1cm
\headsep 1cm
\parskip 8pt


\title[Self-Adjointness of Dirac operators on corner domains]{Self-Adjointness of two dimensional Dirac operators\\ on corner domains}
\date{\today}
\author[F.~Pizzichillo]{Fabio Pizzichillo}
\address{F.~Pizzichillo, CNRS \& CEREMADE, Universit\'e Paris-Dauphine, PSL Research University, F-75016 Paris, France}
\email{pizzichillo@ceremade.dauphine.fr}

\author[H.~Van Den Bosch]{Hanne Van Den Bosch}
\address{H.~Van Den Bosch, Departamento de Ingenier\'{i}a Matem\'{a}tica \& CMM - Centro de Modelamiento Matem\'{a}tico, Universidad de Chile \& UMI--CNRS 2807, Beaucheff 851, Santiago, Chile}
\email{hvdbosch@cmm.uchile.cl}

\subjclass[2010]{Primary 81Q10; Secondary 47N20, 47N50, 47B25}
\keywords{Dirac operator, Quantum-dot, Lorentz-scalar $\delta$-shell, boundary conditions, self-adjoint operator, conformal map, corner domains.}

\begin{document}
\begin{abstract}
We investigate the self-adjointness of the two-dimensional Dirac operator $D$,  with 
\emph{quantum-dot} and \emph{Lorentz-scalar $\delta$-shell}
boundary conditions, on piecewise $C^2$ domains with finitely many corners.
For both models, we prove the existence of a unique self-adjoint realization whose domain is included in the Sobolev space $H^{1/2}$, the formal form domain of the free Dirac operator.
The main part of our paper consists of a description of the domain of $D^*$ in terms of the domain of $D$ and the set of harmonic functions that verify some \emph{mixed} boundary conditions.
Then, we give a detailed study of the problem on an infinite sector, where explicit computations can be made: we find the self-adjoint extensions for this case.
The result is then translated to general domains by a coordinate transformation.
\end{abstract}
\maketitle

\section{Introduction}
In this paper we study the self-adjoint realizations of the two-dimensional Dirac operator with boundary conditions on corner domains.

The free massless Dirac operator in $\R^2$ is given by the differential expression
\begin{equation} \label{eq:def-Dirac}
H:=-i\begin{pmatrix}
0 & \partial_x-i \partial_y \\
\partial_x + i \partial_y& 0
\end{pmatrix} = -i \bvec{\sigma} \cdot \nabla,
\end{equation}
where $\bvec \sigma=(\sigma_1,\sigma_2)$ and the \emph{Pauli matrices} are defined as
\[
\quad{\sigma}_1 =\left(
\begin{array}{cc}
0 & 1\\
1 & 0
\end{array}\right),\quad {\sigma}_2=\left(
\begin{array}{cc}
0 & -i\\
i & 0
\end{array}
\right),\quad{\sigma}_3=\left(
\begin{array}{cc}
1 & 0\\
0 & -1
\end{array}\right).
\]
The Dirac operator describes the evolution of a relativistic particle with spin $\tfrac12$.
It also arises as an effective description of electronic excitations in materials with a hexagonal lattice structure, such as graphene.
The free operator in $\R^2$ can be seen to be self-adjoint on $\D(H):=H^1(\R^2,\C^2)$, since it is equivalent to multiplication by $\bvec \sigma \cdot \bvec k$ after Fourier transform.
For more details, see for instance \cite{thaller}.

Let $\Omega \subset \R^2$ be a connected domain with $\bdry:=\partial\Omega$. Throughout, $\gamma$ is the trace at $\bdry$.
We denote by $\bvec n$ the outward normal and by $\bvec t$ the tangent vector to $\bdry$ chosen in such a way that $(\bvec n, \bvec t)$ is positively oriented. 
In this paper, we will study two perturbations of the free Dirac operator related to the domain $\Omega$.

The \emph{quantum-dot} operator arises when the Dirac fermions are confined by a termination of the lattice or by some type of potential. 
The best-known example of these boundary conditions is the one known in different communities as infinite mass, armchair, MIT-bag or chiral, as introduced in \cite{BerryMondragon} for theoretical reasons, or experimentally studied, for instance in \cite{PonomarenkoetAl}. In \cite{stockmeyerinfinite}, it was shown that this operator is the limit (in a suitable sense) of the free Dirac operators perturbed by a mass term localized outside the domain $\Omega$, when this mass tends to infinity.  
The quantum-dot operator $D^Q$ acts as $H$ on 
the domain
\begin{equation}\label{eq:def-dom-DQ}
\D (D^Q) = \{u \in H^1(\Omega,\C^2): P^Q_\eta  \gamma u= 0  \}.
\end{equation}
where  the boundary condition $P^Q_\eta$ is parametrized by $\eta \in [0,\pi)$, and it is given by 
\begin{equation}\label{eq:Q-dot-BC}
P^Q_\eta \gamma u:= \frac{1}{2}(1 -A^Q_\eta)\gamma u,  \quad A^Q_\eta := \sin (\eta ) \bvec\sigma \cdot \bvec t(s) + \cos( \eta) \sigma_3. 
\end{equation}
Throughout this paper we assume that $\eta\in (0,\pi)$. The case $\eta=0$ is known as \emph{zig-zag boundary value conditions}. It is, mathematically speaking, very different from the other cases and we plan to study corners in this model in the future.

The \emph{$\delta$-shell interaction} arises as a limiting case of the free Dirac operator perturbed by a potential that is strongly localized on the curve $\bdry$.
Formally, one can think of this perturbation as a potential that is a coupling constant times the Dirac $\delta $ distribution on $\bdry$.
In order to make sense of this mathematically, it has to be considered as a boundary value problem.
The action of the $\delta$-shell operator on a function defined on the whole space
can be seen as the direct sum of the action of the free Hamiltonian on the restriction of the function on $\Omega$ and its complement $\Omega^c$. Along the curve $\bdry$, both functions are linked by a special type of transmission condition given in \eqref{eq:shell-BC}.
In dimension three, in \cite{klein, sphericalnote} it is shown that this type of operator is exactly the limit of the operators with smooth potentials that approximate a delta function on the surface.
The case that we study here, is the case where this potential takes the form of position-dependent mass term, or formally $ m \delta_{\bdry} \sigma_3$.
We call this model the \emph{Lorentz-scalar} $\delta$-shell (as opposed to an electrostatic delta-shell generated by $ V \delta_{\bdry} \id_{\C^2}$), since
it is invariant under Lorentz transformations.
We study the \emph{Lorentz-scalar $\delta$-shell} operator $D^L$, defined as the action of $H$ on pairs of spinors $u_+, u_-$ defined in $\Omega^+ \equiv \Omega$ and $\Omega^- \equiv\R^2 \setminus \overline \Omega$, with domain
\begin{equation}\label{eq:def-dom-DL}
\D(D^L) = \left\{(u_+, u_-) \in H^1(\Omega^+,\C^2) \times H^1(\Omega^-,\C^2): 
P^L_\mu  (\gamma u_+, \gamma u_-)= 0  \right
\},\\
\end{equation}
where  the boundary condition $P^L_\mu $ is parametrized by $\mu \in (-1,1)$, and defined as the orthogonal projection on pairs of spinors satisfying
\begin{equation} \label{eq:shell-BC}
M^+_\mu \gamma u_++ M^-_\mu \gamma u_- =0,
\quad
M^\pm_\mu:=\left(\pm i \bvec \sigma \cdot \bvec n + \mu \sigma_3 \right).
\end{equation}
For the physical interpretation, $2 \mu$ is the \emph{mass} of the $\delta$ shell. Throughout this paper, we assume that $\mu\neq 0$ since the case $\mu=0$ coincides with the free Dirac operator on $\R^2$.

When $\Omega$ is a $C^2$ domain, both operators are self-adjoint.
In other words, the boundary value problem has an elliptic regularity property.
For the quantum-dot model, this was shown in \cite{benguria2017self}.
The $\delta$-shell interaction has been studied previously in dimension three, but the 3-dimensional theory also applies in dimension 2.
Self-adjointness for 3-dimensional $\delta$-shell interactions has been obtained in \cite{ditexnseb, amv1,amv2,amv3, ourmieres2018strategy, behrndt2016dirac, behrndt2018spectral, holzmann2018dirac}, in increasingly general settings, and  we refer to \cite{diracdeltareview} for a review on the topic.

In this paper we are interested in relaxing the smoothness hypothesis on the domain: we consider domains with \emph{corners}. This is justified by several reasons: first the fact that for numerical approximation, smooth curves are approximated by polygons.
Secondly, from a mathematical point of view this turns out to be an interesting question, and it goes beyond a mere generalization of the methods in previous works.
Indeed, if we compare the same problem for the Schr\"odinger operator, we obtain that for convex corners the operator admits a one-parameter family of self-adjoint extensions. Any element in this set is the norm resolvent limit of a suitable sequence of Friedrichs-Dirichlet Laplacians with point interactions, see \cite{posilicano2013many}.
Although we can expect the existence of a family of self-adjoint extensions, they cannot correspond to point interactions,
since the point interaction for the Dirac operator is not well defined in dimension greater than one.

To our knowledge, boundary value problems on corner domains for the $2$-d Dirac operator have been treated in only two works.
In \cite{le2018self}, the case of polygons has been treated for the \emph{MIT}-bag model, a particular case of quantum-dot boundary conditions.
In the case where $\Omega$ is a sector, the authors prove that the operator defined on $H^1$ is self-adjoint for opening angles in $(0,\pi)$ and it is not self-adjoint for opening angles in $( \pi, 2 \pi)$. In the latter case, it admits a one-parameter family of self-adjoint extensions and among them, only one has domain included in the Sobolev space $H^{1/2}$.
In \cite{cassano2019self}, the authors study the case of two-valley Dirac operator on a wedge in $\R^2$ with infinite mass boundary conditions, whith an additional sign flip at the vertex. They parametrize all its self-adjoint extensions, proving that there exists no self-adjoint extension, which can be decomposed into an orthogonal sum of two two-component operators. This property is related to the \emph{valley-mixing} effect.

These two papers strongly depend on the radial symmetry of the domain.
We generalize the results in \cite{cassano2019self,le2018self} to more general boundary conditions and to \emph{curvilinear polygons}.
Our main result, \Cref{thm:main}, states that for a general bounded and piecewise $C^2$-regular domain $\Omega$ with finitely many corners,  the operators $D^L$ and $D^Q$ have a unique self-adjoint extensions with domain contained in $H^{1/2}$, the natural form domain of $H$. Since functions in $H^{1/2}$ do not necessarily have boundary traces, we need to introduce some definitions before stating this precisely.
The proofs in \cite{le2018self,cassano2019self} use an exact decomposition of the operator on the wedge in angular momentum subspaces. 
This strategy could also work for the operator under consideration here. However, we chose a different method that can be seen as a Dirac analogue of the tools developed in \cite{grisvard} for the case of second-order elliptic operators on corner domains. Indeed, in \Cref{prop:D(D^*)}, we characterize the domain of the adjoint operator in terms of the operator defined on $H^1$ plus
the set of $\C^2$-valued harmonic functions that verify some \emph{mixed} boundary condition, see \Cref{prop:D(D^*)} for more details.
This fact holds independently of details about the domain and may generalize to other or the three-dimensional case. The bottomline is that one has to obtain information about harmonic spinors near corners in order to completely solve the problem.

Before stating our main theorem, following \cite{grisvard}, we define precisely the class of domains under consideration.
\begin{definition}
Let $\Omega\subset\R^2$ be a bounded and simply-connected domain and let $\bdry=\de\Omega$.
We say that $\Omega$ is a \emph{curvilinear polygon} of class $\C^2$ if and only if 
if for every $x\in\bdry$ there exists a neighbourhood $V$ of $x$ in $\R^2$ and a mapping $\psi:V\to\R^2$  such that
\begin{enumerate}[label=\emph{(\roman*)}]
\item $\psi$ is injective;
\item $\psi$ and $\psi^{-1}$ (defined on $\psi(V)$) are of class $C^2$;
\item denoting with $\psi_j$ the $j$-th component of $\psi$,  $\Omega\cap V$ is 
\begin{enumerate}[label=\emph{(\alph*)}]
\label{item:a-def.CurvPol}
\item either $\left\{ y\in V:\psi_2(y)<0\right\}$,
\item either $\left\{ y\in V:\psi_1(y)<0\ \text{and}\ \psi_2(y)<0\right\}$;
\label{item:b-def.CurvPol}
\item or $\left\{ y\in \Omega:\psi_1(y)<0\ \text{or}\ \psi_2(y)<0\right\}$.
\label{item:c-def.CurvPol}
\end{enumerate}
\end{enumerate}
For $x\in\bdry$, we say that $x$ is a \emph{convex corner} in case \ref{item:b-def.CurvPol} and a \emph{non-convex corner} in case  \ref{item:c-def.CurvPol}.
\end{definition}

We will use lowercase letters like $u$, $v$, \dots , to refer to spinors in $L^2(\Omega, \C^2)$ or pairs of spinors in $L^2(\Omega^+, \C^2) \times L^2(\Omega^-, \C^2) $.
When we have to distinguish components,
\[
u = \begin{pmatrix} u_1 \\ u_2 \end{pmatrix}, \quad u_i \in  L^2(\Omega, \C)
\]
in the first case, and
\[
u = \left(u_+, u_-\right) =  \left( \begin{pmatrix} u_{+,1} \\ u_{+,2} \end{pmatrix}, \begin{pmatrix} u_{-,1} \\ u_{-,2} \end{pmatrix} \right), \quad u_{\pm,i} \in  L^2(\Omega^\pm, \C)
\]
in the second case.

From time to time, we omit the superscripts $Q$ and $L$ for statements that apply to both $D^Q$ and $D^L$. 
We define the maximal domain of $H$
for a domain $\mathcal{O}$, by
\[
\cK (\mathcal{O}) := \{ u \in L^2(\mathcal{O},\C^2): H u \in L^2(\mathcal{O},\C^2) \}. 
\]
Since $\mathscr{D}(\Omega, \C^2) \subset \D( D^Q) $, the adjoint operator $(D^Q)^*$ acts as $H$ and $\D((D^Q)^*)\subset\cK(\Omega)$. Analogously $\D((D^L)^*)\subset\cK(\Omega^+)\times\cK(\Omega^-)$.
We can now state our main result.

\begin{theorem} \label{thm:main}
Let $\Omega$ be a bounded and simply connected curvilinear polygon of class $C^2$.
Let the operators $D^Q$ and $D^L$ be defined respectively as in \eqref{eq:def-dom-DQ} and \eqref{eq:def-dom-DL}.
The operators $D^Q$ and $D^L$ admit self-adjoint extensions $D_0^Q$ and $D_0^L$
with domains 
\[
\begin{split}
\D(D_0^Q)&:=\seq{u\in H^{1/2}(\Omega,\C^2)\cap \cK(\Omega): \ P^Q_\eta\gamma u=0};\\
\D(D^L_0) &:=
\left\{
\begin{array}{l}
u=(u_+, u_-) \in \left( H^{1/2}(\Omega^+,\C^2)\cap \cK(\Omega^+) \right)\times \left( H^{1/2}(\Omega^-,\C^2)\cap \cK(\Omega^-)\right) :
\\
 P^L_\mu  (\gamma u_+, \gamma u_-)= 0  
\end{array} 
 \right\}.
\end{split}
\]
\end{theorem}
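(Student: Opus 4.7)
The plan is to follow the Grisvard-style approach outlined in the introduction: characterize $\D(D^*)$ abstractly, reduce the local analysis to an infinite sector near each corner, solve the sector problem explicitly, and glue back via a coordinate change. As a preliminary, one first checks that $D^Q$ and $D^L$ are symmetric on their $H^1$ domains by an integration by parts, using that $-i\bvec\sigma\cdot\bvec n$ anticommutes with $A^Q_\eta$ and intertwines $\ker M^+_\mu$ with $\ker M^-_\mu$ in the way needed by \eqref{eq:shell-BC}. Applying \Cref{prop:D(D^*)} then decomposes $\D(D^*)$ as the sum of the $H^1$ domain and a space $\mathcal{H}_{\mathrm{mix}}$ of harmonic $\C^2$-spinors whose traces satisfy a mixed boundary condition derived from the projector. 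Green's identity equips $\mathcal{H}_{\mathrm{mix}}$ with a symplectic form whose Lagrangian subspaces parametrize all self-adjoint extensions of $D$; the target extensions $D^Q_0$ and $D^L_0$ are to be identified with the Lagrangian cut out by the $H^{1/2}$ constraint.

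The bulk of the work is local. Using a partition of unity subordinate to a covering by one neighbourhood of each corner plus an interior/smooth-boundary piece, the non-corner contributions lie in $H^1$ by the known results for $C^2$ boundaries (\cite{benguria2017self} for the quantum-dot model and \cite{amv1,behrndt2018spectral} for the $\delta$-shell). Near a corner $x_j$ of opening $\omega_j$, I would first straighten the two incident arcs with a local $C^2$ diffeomorphism; under this change of variables the Dirac operator acquires only bounded, lower-order perturbations that do not affect the singular behaviour at the vertex. On the straightened infinite sector a conformal transformation (essentially $z\mapsto z^{\pi/\omega_j}$) reduces the problem to a half-plane, where the harmonic spinors satisfying the mixed condition can be enumerated explicitly by a Fourier/Mellin decomposition in the angular variable. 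This produces a complete classification of the singular modes and, by tracking the exponents under the conformal pull-back, determines exactly which of them lie in $H^{1/2}$ on the original sector.

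With this explicit description at hand, the global assembly proceeds by gluing: the candidate domain $\{u\in H^{1/2}\cap\cK : Pu=0\}$ is shown to be closed, to be a symmetric extension of $D$, and to correspond to a maximal isotropic subspace of $\mathcal{H}_{\mathrm{mix}}$ for the symplectic form of Step 1, which upgrades symmetry to self-adjointness. For $D^L_0$ the same scheme is carried out in parallel on $\Omega^+$ and $\Omega^-$, the transmission condition $P^L_\mu$ being imposed simultaneously on both sides of each corner.

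The main obstacle I anticipate is the explicit sector computation, especially for the Lorentz-scalar $\delta$-shell, where the boundary condition couples the interior and exterior traces through the parameter $\mu$ and the natural angular decomposition is no longer a clean eigenspace splitting. Controlling the $H^{1/2}$ regularity of each angular mode after the conformal rescaling $z\mapsto z^{\pi/\omega_j}$ requires scale-sensitive estimates, and one must verify that only finitely many modes fail to be $H^{1/2}$ so that the defect space is finite-dimensional and the $H^{1/2}$ constraint is genuinely Lagrangian rather than merely isotropic.
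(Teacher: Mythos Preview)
Your overall strategy matches the paper's: decompose $\D(D^*)$ via \Cref{prop:D(D^*)}, analyze the sector problem explicitly, and transfer back by a coordinate change. However, there is a genuine gap in your treatment of the curvilinear straightening, and your sector analysis takes a different route from the paper.

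The gap is the claim that under a $C^2$ straightening diffeomorphism ``the Dirac operator acquires only bounded, lower-order perturbations.'' This is false: since $H$ is first-order, a change of variables alters the principal symbol and produces terms of the form $\sum_j L_j(x)\partial_j$, where $L_j$ measures the deviation of the Jacobian from the identity. Even after arranging $\|L_j(x)\| \le C|x|$, these are not lower-order, and controlling them on $\D(D_0)$ is non-trivial because elements of that domain are not $H^1$ near the corner. The paper handles this in two steps: first (Lemma~4.2) it shows that any $u \in \D(D_0)$ satisfies $\||x|\nabla u\|_{L^2} < \infty$, using the $H^{1/2}$ regularity in an essential way (via the Sobolev embedding into $L^4$ to control integrals on shrinking circles); second (Lemma~4.3), it uses the explicit decomposition $\D(D_0^{W_\omega}) = \D(D) + \operatorname{span}(u_0)$ from the sector analysis to prove that $\sum_j L_j\partial_j$ is $D_0^{W_\omega}$-bounded with relative bound $O(\rho)$ on functions supported in $B(0,\rho)$, so that Kato--Rellich applies for $\rho$ small. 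Without this, you cannot conclude that the straightening preserves self-adjointness or even that $U\D(D_0^{\Omega}) \subset \D(D_0^{W_\omega})$. This is exactly why the paper stresses that the Lorentz-scalar case is delicate: no single conformal map straightens both sides of $\Sigma$, so one is forced into a non-conformal diffeomorphism and must treat these first-order perturbations.

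On the sector itself, the paper does not reduce to a half-plane via $z\mapsto z^{\pi/\omega}$; it works directly on the wedge in polar coordinates and diagonalizes the self-adjoint angular operator $-i\sigma_3\partial_\theta$ with the boundary conditions inherited from $P^Q_\eta$ or $P^L_\mu$ (Lemma~3.4), obtaining explicit eigenvalues $\lambda_k$ and showing $\cN_\rho(0)/(\cN_\rho(0)\cap H^1) = \operatorname{span}\{u_k : \lambda_k \in (-1,0]\}$. Your conformal idea could be made to work for the quantum-dot model, but note that $z\mapsto z^{\pi/\omega}$ has a derivative that blows up or vanishes at the vertex and introduces a singular conformal weight rather than eliminating the corner; for the $\delta$-shell it does not respect the two-sided geometry, so after your diffeomorphism you are still on a two-sided wedge and should carry out the angular analysis there directly, as the paper does.
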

\begin{remark}
The functions in $\cK (\Omega^\pm)$  do not have $H^1$ regularity and so,
a priori, the boundary conditions are ill-defined. 
Nevertheless, we will see in \Cref{lem:trace-ext} that the boundary trace can be defined in a weaker sense. Also, away from the corners, elements of $\D(D_0^Q)$
are $H^1$ and thus the boundary conditions hold in the usual sense in any subset of $\Sigma$ not containing corners.
\end{remark}

Theorem~\ref{thm:main} will be a consequence of a more general result about the decomposition of the domains of the adjoint operators $(D^Q)^*$ and $(D^L)^*$. 
We first define the localized operators close to each corner and the spaces of solutions of the corresponding adjoint problems.

\begin{definition}\label{def:D^*_rho}
Let $\Omega$ be a curvilinear polygon of class $C^2$ and let $D$ be defined as in \eqref{eq:def-dom-DQ} and \eqref{eq:def-dom-DL}.
Let $\mathcal{C}$ be the finite set of the corners of $\Sigma$. 
For every $c\in \mathcal{C} $ we define
\begin{equation}\label{eq:def.K_rho}
\begin{split}
N^Q_\rho(c)&:=
\{ u \in \cK (\Omega\cap B(c,\rho)) : 
\Delta u=0\ \text{and}\ P^Q_\eta\gamma u=P^Q_\eta\gamma (Hu)=0\ \text{on}\ \Sigma\cap   B(c,\rho)
\};
\\
N^L_\rho(c_j)&:=
\left\{
\begin{array}{l}
u=(u^+,u^-)\in  \cK (\Omega^+\cap B(c,\rho))\times \cK (\Omega^-\cap B(c,\rho)):\\
\Delta u=(\Delta u^+,\Delta u^-)=0\  \text{and}\ P^L_\mu\gamma u=P^L_\eta\gamma (Hu)=0\ \text{on}\ \Sigma\cap   B(c,\rho)
\end{array}
\right\}.
\end{split}
\end{equation}
So, the spaces $N_\rho(c)$ contain harmonic functions in a neighborhood of the corner $c$ satisfying some mixed boundary conditions.
We will also need to extend these functions to the entire domain. To this end, fix a radial cut-off $\phi$ such that
\begin{equation}\label{eq:def.cutoff}
\phi \in C^{\infty}(\R^2, [0,1]),\quad\text{and} \quad
\phi(x)=
\begin{cases}
1&\text{for } |x|<1/3;\\
0&\text{for } |x|>2/3;
\end{cases}.
\end{equation}
We define
\begin{equation}\label{eq:def.N_rho}
\mathcal{N}_{\rho}(c):=
\left\{\phi\left(\tfrac{x-c}{\rho}\right) u^E (x) :  u  \in N_\rho(c) \right\},
\end{equation}
where, for $u$ defined in $B(c, \rho)$, $u^E$ denotes its extension by zero.
\end{definition}
\begin{remark}
Since $H^2$ acts as $- \Delta$, if $u\in N_\rho(c)$ then  $Hu \in \cK$ and the boundary trace of $Hu$  is well-defined in the generalized sense.
\end{remark}
With these definitions, we can state the following theorem.

\begin{theorem}\label{prop:D(D^*)}
Let $\Omega$ be a curvilinear polygon of class $C^2$ with finitely many corners. Denote by $\mathcal{C}$ be the set of its corners.
Let $D$ be defined as in \eqref{eq:def-dom-DQ} and \eqref{eq:def-dom-DL} . For $\rho>0$ define $\mathcal{N}_\rho(c)$ for all $c\in\mathcal{C}$ as in \eqref{eq:def.N_rho}.
Then, we have a decomposition:
\begin{equation}\label{eq:D(D^*)}
\D(D^*) = \D(D)+\sum_{c\in\mathcal{C}}\mathcal{N}_\rho(c)
\end{equation}
\end{theorem}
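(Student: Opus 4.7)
The plan is to prove the two set inclusions separately. Since $D$ is symmetric, $\D(D)\subseteq\D(D^*)$ is automatic. For $\mathcal{N}_\rho(c)\subseteq\D(D^*)$, I would check each representative $w=\phi((\cdot-c)/\rho)\,u^E$ (with $u\in N_\rho(c)$) directly: clearly $w\in L^2(\Omega)$, and a Leibniz computation gives $Hw=\phi\,(Hu)^E+(-i\bvec\sigma\cdot\nabla\phi)\,u^E\in L^2(\Omega)$, since $u\in\cK$ and $\nabla\phi$ is bounded. The adjoint property then reduces, via the generalized Green identity afforded by \Cref{lem:trace-ext}, to checking that $\int_\Sigma\langle -i\bvec\sigma\cdot\bvec n\,\gamma v,\gamma w\rangle\di s=0$ for every $v\in\D(D)$; on $\Sigma\cap B(c,\rho)$, both $\gamma v$ and $\gamma w=\phi\,\gamma u$ lie in $\ker P^Q_\eta$ (resp.\ $\ker P^L_\mu$), and since $A^Q_\eta$ anticommutes with $\bvec\sigma\cdot\bvec n$, the matrix $\bvec\sigma\cdot\bvec n$ maps $\ker P^Q_\eta$ into its orthogonal complement, making the pointwise integrand vanish.

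For the reverse inclusion, I start with $u\in\D(D^*)$ and first invoke local elliptic regularity for the Dirac BVP on smooth portions of $\Sigma$, available from the smooth-boundary self-adjointness recalled in the introduction (\cite{benguria2017self,ourmieres2018strategy}): this gives $u\in H^1_{\mathrm{loc}}(\overline\Omega\setminus\mathcal{C})$ with $P^Q_\eta\gamma u=0$ on $\Sigma\setminus\mathcal{C}$. Fix a partition of unity $\{\chi_c\}_{c\in\mathcal{C}}\cup\{\chi_0\}$ with $\chi_c:=\phi((\cdot-c)/\rho)$ and $\chi_0:=1-\sum_c\chi_c$; then $\chi_0 u\in\D(D)$, so the problem reduces, for each corner $c$, to writing the localized piece $\chi_c u$ as $v_c+\chi_c\,\tilde w_c^E$, with $v_c\in H^1(\Omega)$ satisfying $P^Q_\eta\gamma v_c=0$ (so that $v_c\in\D(D)$) and with $\tilde w_c\in N_\rho(c)$.

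The heart of the proof is this local decomposition. Concretely, I would establish
\[
\{u\in\cK(\Omega\cap B(c,\rho)):P^Q_\eta\gamma u=0\}=\{u\in H^1(\Omega\cap B(c,\rho)):P^Q_\eta\gamma u=0\}+N_\rho(c)
\]
by setting up an auxiliary mixed boundary value problem. Given $\chi_c u$ in the left-hand side, note that $-\lapl(\chi_c u)=H(H(\chi_c u))\in H^{-1}$ by $H^2=-\lapl$. I would then seek $v_c\in H^1(\Omega\cap B(c,\rho))$ with $-\lapl v_c=-\lapl(\chi_c u)$, Dirichlet-type data $P^Q_\eta\gamma v_c=0$ and Neumann-type matching $P^Q_\eta\gamma Hv_c=P^Q_\eta\gamma H(\chi_c u)$ on $\Sigma\cap B(c,\rho)$, together with vanishing Dirichlet data on the artificial arc $\Omega\cap\partial B(c,\rho)$. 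Once $v_c$ is produced, the function $\tilde w_c:=\chi_c u-v_c$ is harmonic in $\Omega\cap B(c,\rho)$ and, by construction, satisfies the two boundary conditions in the definition of $N_\rho(c)$: $P^Q_\eta\gamma\tilde w_c=0$ from the Dirichlet matching, and $P^Q_\eta\gamma H\tilde w_c=0$ from the Neumann matching. Extending by zero places $\chi_c\tilde w_c^E\in\mathcal{N}_\rho(c)$, and patching across all corners yields the global decomposition.

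The main obstacle is the solvability of this mixed BVP: \emph{it is precisely this step that forces the appearance of $\mathcal{N}_\rho(c)$.} In the spirit of Grisvard's framework for elliptic problems on corner domains, the Laplace mixed problem on the curvilinear polygon is expected to be Fredholm in $H^1$, and its obstruction to $H^1$-solvability is captured by a finite-dimensional space of harmonic singularities satisfying the homogeneous boundary conditions --- exactly $N_\rho(c)$. Whenever the data is not in the image of this Fredholm map, one must add back the corresponding singular harmonic function to restore solvability, and this addendum is $\tilde w_c$. Verifying the Fredholm and compatibility structure is the technical crux; it can be carried out abstractly or, if needed, by a conformal straightening of the corner to an infinite sector, where the singular harmonic spinors satisfying both boundary conditions can be enumerated explicitly. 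The same scheme applies verbatim to $D^L$, with $P^Q_\eta$ replaced by $P^L_\mu$ and $\Omega$ by the pair $(\Omega^+,\Omega^-)$.
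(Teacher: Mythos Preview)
Your overall architecture matches the paper's: symmetry gives $\D(D)\subset\D(D^*)$, one checks $\mathcal{N}_\rho(c)\subset\D(D^*)$ directly, and the substantive work is the reverse inclusion, handled by localising near each corner and splitting off a harmonic singular part. Where you and the paper diverge is the \emph{mechanism} for this local splitting.

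You propose to produce the $H^1$-piece $v_c$ by solving a second-order mixed Laplace problem (Dirichlet-type condition $P^Q_\eta\gamma v_c=0$, Neumann-type matching on $P^Q_\eta\gamma H v_c$, Dirichlet on the artificial arc), appealing to a Grisvard-style Fredholm alternative whose cokernel you identify with $N_\rho(c)$. You correctly flag this as the crux, but you do not establish it: the variational formulation of this particular vector mixed problem is nonstandard (the ``Neumann'' datum involves $\gamma Hv_c$ for $v_c\in H^1$, which only makes sense through the extended trace), and the identification of the cokernel with $N_\rho(c)$ is asserted, not proved. So as written there is a genuine gap at precisely the step you highlight.

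The paper bypasses this entirely by staying at first order. It introduces the localised operator $D_{\rho,c}$ (action of $H$ on $H^1$ spinors in $\Omega\cap B(c,\rho)$, vanishing on the artificial arc, satisfying the boundary condition on $\Sigma$) and proves directly, via the quadratic-form identity \eqref{eq:q-form-D-Q}/\eqref{eq:q-form-D-L} and compact embedding, that $D_{\rho,c}$ is closed with closed range and trivial kernel (\Cref{lem:Fredholm}). Then, for $w\in\D(D^*)$, one projects $D^*_{\rho,c}w^R$ onto $\Ran(D_{\rho,c})$ to get $D_{\rho,c}\tilde w$, and the remainder lies in $\Ran(D_{\rho,c})^\perp=\Ker(D^*_{\rho,c})$; hence $u:=w^R-\tilde w$ satisfies $Hu\in\Ker(D^*_{\rho,c})$, i.e.\ $u\in\Ker((D^*_{\rho,c})^2)$, and the last item of \Cref{lem:Fredholm} identifies this kernel with $N_\rho(c)$. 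No second-order BVP, no Grisvard machinery: the Fredholm input is the elementary $H^1$-coercivity of $D_{\rho,c}$, and the link to $N_\rho(c)$ is the one-line observation that $N_\rho(c)=\Ker((D^*_{\rho,c})^2)$. This is both shorter and fully self-contained; your route could be completed, but would require building the mixed-BVP theory from scratch.
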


We will prove Theorem~\ref{prop:D(D^*)} in Section~\ref{sec:general}.
In Section~\ref{sec:separation}, we use separation of variables to compute a basis of $\mathcal{N}_\rho(c)$ in the case of a wedge with straight edges.
This allows to obtain the complete description of self-adjoint extensions for corners with straight edges.
In section~\ref{sec:curvilinear}, we obtain a unique self-adjoint extension with domain $\D(D^*) \cap H^{1/2}$ for curvilinear polygons.

\section{General considerations and proof of \texorpdfstring{\Cref{prop:D(D^*)}}{Theorem 1.3}}\label{sec:general}

In this section, we group some properties of the operators $D^Q$ and $D^L$, their adjoints, and finally prove Theorem~\ref{prop:D(D^*)} . 
We assume that $\Omega$ is a bounded curvilinear polygon of class $C^2$. 

We start by some identities that are well-known from the smooth case.
In order to simplify the computations, we rewrite the boundary condition $P^Q_\eta$ and $P^L_\mu$ defined respectively in \eqref{eq:Q-dot-BC} and \eqref{eq:shell-BC}. 
Throughout the paper, we use the canonical identification $\R^2\sim \C$, that is for any $\bvec x\in \R^2$, we will denote $x:=\bvec{x}_1 + i \bvec{x_2}\in\C$. In particular, with this notation $n=\bvec{n}_1+i \bvec{n}_2$ and $t=\bvec{t}_1+ i \bvec{t}_2$, where $\bvec{n}$ is the outward unit normal and $\bvec{t} = (-\bvec n_2, \bvec n_1)$ is the tangent vector with our choice of orientation.

For the quantum-dot model, 
$P^Q_\eta \begin{pmatrix}
\gamma u_1 \\ \gamma u_2
\end{pmatrix} = 0$ if and only if 
\[
\gamma u_2 = B t \gamma u_1, \quad\text{where}\ B := \frac{\sin \eta}{1 - \cos \eta}.
\]
So we can use equivalently
\begin{equation}\label{eq:def-dom-DQ-R}
\D(D^Q) =\seq{u\in H^1(\Omega,\C^2): \gamma u_2=B t \gamma u_1}.
\end{equation}
From \eqref{eq:def-dom-DQ-R}, the operator $D^Q$ depends on a parameter $B>0$. We use the notation $D^{Q,B}$ to stress this dependence.
Setting 
\[
M_B:=\begin{pmatrix}
B^{-1/2}&0\\
0& B^{1/2}
\end{pmatrix},
\]
 we have that
\[
D^{Q,B}=M_B D^{Q,1} M_B.
\]
Thanks to this and since the matrix  $M_B$ is Hermitian and invertible,
the problem of the self-adjointness for the operator $D^{Q,B}$ is equivalent to the problem of self-adjointness for the operator $D^{Q,1}$. For this reason, from now on we we only assume that $B=1$ or equivalently $\eta=\pi/2$.  This kind of boundary condition, is called \emph{infinite-mass} boundary condition.
Finally, for sake of clarity we identify $P^Q=P^Q_{\pi/2}$.

For the Lorentz-scalar $\delta$-shell, we have that $P^L_\eta \gamma(u_+,u_-) = 0$ if and only if  
\[
\gamma u_- =   - (M_\mu^-)^{-1} M_\mu^+ \gamma u_+ = - \cosh(\alpha) \gamma u_+  - \sinh(\alpha) \dotprod{\sigma}{t} \gamma u_+,\quad\text{where}\ \tanh(\alpha) = \frac{2 \mu}{1+\mu^2}.
\]
Again, we will mainly use this characterization of the domain
\begin{equation} \label{eq:def-dom-DL-R}
\D (D^L) =
\left\{
 (u_+, u_-)  \in H^1(\Omega_+,\C^2 )\times H^1(\Omega_+,\C^2 ):
 \gamma u_- = (\cosh(\alpha) - \sinh(\alpha) \dotprod{\sigma}{t}) \gamma u_+ 
\right\}.
\end{equation}
We now list some useful identities. For smooth $u,v$, these identities follow from the divergence theorem and identities of the Pauli matrices. They follow for general $u,v$ by an approximation argument that requires some extra care in the case of limited boundary regularity. We provide a detailed proof in Appendix~\ref{sec:appendix}.
\begin{lemma} \label{lem:ibp} Let 
$\mathcal{O}$ be a piecewise $C^1$ domain,  $\bvec{n}_{\mathcal{O}} $ the outward normal. Let $\Omega$ be a curvilinear polygon of class $C^2$ with boundary $\Sigma$.
We define, almost everywhere on $\Sigma$, $\kappa \equiv \dotprod{t}{}\partial_{\bvec t} \bvec n$, which equals, up to a sign depending on the orientation, the piecewise continuous curvature of the boundary.
 \begin{enumerate}[label=(\roman*)]
  \item For all $u, v \in H^1(\mathcal{O})$ , we have
  \begin{equation} \label{eq:ibp}
   \inprod{u}{Hv}_{L^2(\mathcal{O})} - \inprod{Hu}{v}_{L^2(\mathcal{O})}  =i\int_{\partial \mathcal{O}} \inprodC{\bvec \sigma \cdot \bvec n_\mathcal{O}u}{ v};
  \end{equation}
  \item For all $u, v \in H^1(\mathcal{O})$
    \begin{equation}    \label{eq:q-form-general}
    \inprod{Hu}{Hv}_{L^2(\mathcal{O})}  = \inprod{\nabla u}{\nabla v}_{L^2(\mathcal{O})}  + \int_{\partial \mathcal{O}} \inprodC{u}{i \sigma_3 \partial_{\bvec t_\mathcal{O}} v};
  \end{equation}
     \item For all $u, v \in \D(D^Q)$
  \begin{equation} \label{eq:q-form-D-Q}
  \inprod{D^Q u}{ D^Q v}_{L^2(\Omega)} = \inprod{\nabla u}{\nabla v}_{L^2(\Omega)} + \int_{\bdry} \frac{\kappa}{2} \inprodC{u}{v};
  \end{equation}
  \item For all $u,v \in \D(D^L)$
  \begin{equation} \label{eq:q-form-D-L}
  \begin{split}
   \inprod{D^L u}{D^L v}_{L^2(\Omega^+)\times L^2( \Omega^-)} =& \inprod{\nabla u}{\nabla v}_{L^2(\Omega^+)\times L^2( \Omega^-)} - \sinh(\alpha)\int_{\bdry} \kappa  \inprodC{u_-} {\dotprod{\sigma}{t} v_+} 
 \end{split}
  \end{equation} 
 \end{enumerate}
\end{lemma}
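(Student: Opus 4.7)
The plan is to verify each identity first on smooth functions via the divergence theorem and Pauli-matrix algebra, then extend by density to $H^1$, and finally obtain (iii)--(iv) by specializing (ii) to functions satisfying the respective boundary conditions.

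For $u, v \in C^\infty(\overline{\mathcal O})$, identity (i) is the classical Green formula for the Dirac operator: since each $\sigma_j$ is Hermitian, $\vdiv\bigl(\inprodC{u}{\bvec\sigma v}\bigr) = \inprodC{\nabla u}{\bvec\sigma v} + \inprodC{u}{\bvec\sigma\cdot\nabla v}$, and the divergence theorem yields the claim. For (ii), one would use the two-dimensional Pauli identity $(\bvec\sigma\cdot\bvec a)(\bvec\sigma\cdot\bvec b) = (\bvec a\cdot\bvec b)\id + i\sigma_3(a_1b_2-a_2b_1)$ to decompose $(\bvec\sigma\cdot\nabla u)^*(\bvec\sigma\cdot\nabla v)$ as $\overline{\nabla u}\cdot\nabla v$ plus a skew cross term that is a pure divergence (the mixed second derivatives cancel); the divergence theorem then turns the skew term into the boundary integral with integrand $i\inprodC{u}{\sigma_3\partial_{\bvec t_{\mathcal O}}v}$. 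Since $\mathcal O$ is piecewise $C^1$, hence Lipschitz, $C^\infty(\overline{\mathcal O})$ is dense in $H^1(\mathcal O)$; both sides of (i)--(ii) define continuous bilinear forms on $H^1(\mathcal O) \times H^1(\mathcal O)$ via the trace $\gamma: H^1(\mathcal O) \to H^{1/2}(\partial\mathcal O)$ and its distributional tangential derivative $\partial_{\bvec t_{\mathcal O}}: H^{1/2}(\partial \mathcal O) \to H^{-1/2}(\partial \mathcal O)$, so the identities pass to $H^1(\mathcal O)$ by density.

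For (iii), I would substitute $\gamma u_2 = t\gamma u_1$ and $\gamma v_2 = t\gamma v_1$ (with $t = t_1+it_2$ the complex tangent) into the boundary integrand of (ii). A direct computation using $|t|=1$, the geometric identity $\partial_{\bvec t}t = -\kappa n$, and the orientation relation $\bar t\, n = -i$ arising from $\bvec t = (-n_2,n_1)$, collapses $i\inprodC{\gamma u}{\sigma_3\partial_{\bvec t}\gamma v}$ to $\tfrac{\kappa}{2}\inprodC{\gamma u}{\gamma v}$ on each smooth arc. For (iv), apply (ii) separately on $\Omega^+$ and $\Omega^-$ (noting $\bvec n_{\Omega^-} = -\bvec n$), add the two boundary contributions, and insert the transmission identity $\gamma u_- = (\cosh\a - \sinh\a\,\bvec\sigma\cdot\bvec t)\gamma u_+$; the simplification then uses the anticommutation $\{\bvec\sigma\cdot\bvec t, \sigma_3\} = 0$ and $(\bvec\sigma\cdot\bvec t)^2 = \id$, yielding the stated reduced form.

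The principal obstacle is the treatment of the corners of $\Sigma$ in (iii) and (iv). There $\bvec t$ jumps, so interpreting $\partial_{\bvec t}(t\gamma u_1)$ distributionally on all of $\Sigma$ produces delta-type corner contributions that must be shown to cancel against analogous terms generated by the jumps of $\gamma u_2$, so that the corner-free expressions claimed on the right-hand sides of (iii) and (iv) genuinely hold for all of $\D(D^Q)$ and $\D(D^L)$. One can establish this by a careful approximation of elements of $\D(D^Q)$ (resp.\ $\D(D^L)$) by smoother functions that respect the boundary (resp.\ transmission) condition arc-by-arc and whose traces match compatibly at each corner, exploiting the fact that $H^{1/2}(\Sigma)$ controls the boundary values in a neighborhood of each corner. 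These technical trace-theoretic arguments are the content of Appendix~\ref{sec:appendix}.
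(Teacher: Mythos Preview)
Your approach is correct and matches the paper's for (i) and (ii): divergence theorem plus Pauli algebra for smooth functions, then density into $H^1$ using that the boundary term is the $H^{1/2}$--$H^{-1/2}$ pairing. For (iii) and (iv) your pointwise computation on smooth arcs (using $\partial_{\bvec t}\bvec t = -\kappa\bvec n$, the anticommutation $\{\dotprod{\sigma}{t},\sigma_3\}=0$, and the boundary/transmission relations) is also what the paper does.

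The one place you take a different route is the corner issue. You propose to track the delta-type contributions from the jumps of $\bvec t$ and show they cancel, via an approximation of elements of $\D(D)$ by smoother functions matching the boundary conditions arc-by-arc. The paper avoids this entirely: it first proves the pointwise identity $\inprodC{f}{i\sigma_3\partial_{\bvec t}g}=\tfrac{\kappa}{2}\inprodC{f}{g}$ (or the Lorentz-scalar analogue) only on compact portions $\Sigma\cap\mathcal O$ away from the corners, extends by density to traces in $H^{1/2}(\Sigma\cap\mathcal O)$, and then lets $\mathcal O\uparrow\Sigma$ by dominated convergence, using that $\kappa$ is bounded and the traces are in $L^2(\Sigma)$. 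This sidesteps any distributional analysis at the corners; your route would work too but is more laborious.
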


With these identities, we check that the operators defined previously are symmetric.
\begin{proposition}\label{prop:symmetry}
The operators $D^Q$ and $D^L$, defined in \eqref{eq:def-dom-DQ} and \eqref{eq:def-dom-DL} respectively, are symmetric and closed.
\end{proposition}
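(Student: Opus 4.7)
The plan is to establish symmetry directly from the integration by parts identity \eqref{eq:ibp}, and to deduce closedness from an a priori estimate asserting that the graph norm controls the $H^1$ norm.

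For symmetry of $D^Q$, let $u,v\in\D(D^Q)$; by Lemma~\ref{lem:ibp}(i) it suffices to show that the boundary contribution $\int_{\bdry}\inprodC{\bvec\sigma\cdot\bvec n\,u}{v}$ vanishes. Writing $\bvec\sigma\cdot\bvec n$ out and using the complex identification $t=in$, the condition $\gamma u_2 = t\,\gamma u_1$ (cf.\ \eqref{eq:def-dom-DQ-R}) together with the analogous one for $v$ collapses the integrand to $(\bar n t + n\bar t)\,\gamma u_1\,\overline{\gamma v_1}=0$. For $D^L$, applying \eqref{eq:ibp} separately on $\Omega^\pm$ (with outward normals $\pm\bvec n$) leaves the difference $-i\int_{\bdry}\bigl(\inprodC{\bvec\sigma\cdot\bvec n\,u_+}{v_+}-\inprodC{\bvec\sigma\cdot\bvec n\,u_-}{v_-}\bigr)$. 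Writing \eqref{eq:def-dom-DL-R} as $\gamma u_- = T\gamma u_+$ with the self-adjoint operator $T:=\cosh(\alpha)-\sinh(\alpha)\bvec\sigma\cdot\bvec t$, the second integrand becomes $\inprodC{T(\bvec\sigma\cdot\bvec n)T\,u_+}{v_+}$, so the problem reduces to verifying the identity $T(\bvec\sigma\cdot\bvec n)T=\bvec\sigma\cdot\bvec n$. This is a short Pauli-algebra computation based on $(\bvec\sigma\cdot\bvec n)(\bvec\sigma\cdot\bvec t)+(\bvec\sigma\cdot\bvec t)(\bvec\sigma\cdot\bvec n)=0$, on $(\bvec\sigma\cdot\bvec t)(\bvec\sigma\cdot\bvec n)(\bvec\sigma\cdot\bvec t)=-\bvec\sigma\cdot\bvec n$, and on $\cosh^2\alpha-\sinh^2\alpha=1$, which makes the two integrals cancel.

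For closedness, I rely on the quadratic form identities \eqref{eq:q-form-D-Q} and \eqref{eq:q-form-D-L}. Bounding the boundary term in the $D^L$ case by Cauchy--Schwarz, both yield
\[
\|\nabla u\|_{L^2}^2 \leq \|Du\|_{L^2}^2 + C_\Omega\,\|\gamma u\|_{L^2(\bdry)}^2,
\]
with $C_\Omega$ depending on $\sup_{\bdry}|\kappa|$ and (for $D^L$) on $|\sinh(\alpha)|$. Since $\Omega$ is a Lipschitz curvilinear polygon and a bounded annular neighborhood of the compact boundary is Lipschitz as well, the trace map $H^1\to L^2(\bdry)$ is compact; Ehrling's lemma then supplies, for every $\epsilon>0$, the interpolation
\[
\|\gamma u\|_{L^2(\bdry)}^2 \leq \epsilon\,\|\nabla u\|_{L^2}^2 + C_\epsilon\,\|u\|_{L^2}^2.
\]
Choosing $\epsilon$ small enough to absorb the gradient contribution gives $\|u\|_{H^1} \leq C(\|Du\|_{L^2}+\|u\|_{L^2})$. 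Hence any sequence $(u_n)\subset\D(D)$ that is Cauchy in the graph norm is Cauchy in $H^1$; the limit $u$ inherits the boundary condition because traces converge in $L^2(\bdry)$, and $Du$ equals the $L^2$-limit distributionally, so $u\in\D(D)$ and $D$ is closed.

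The main technical point is the graph-norm bound itself: since $\kappa$ is only piecewise continuous and may change sign, the boundary term in \eqref{eq:q-form-D-Q}--\eqref{eq:q-form-D-L} cannot be absorbed without an interpolation/compactness argument. The exterior domain $\Omega^-$ introduces an additional minor wrinkle that is disposed of by localizing near the compact boundary $\bdry$ before invoking the classical trace and Ehrling estimates.
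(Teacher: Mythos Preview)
Your proof is correct and follows essentially the same route as the paper: symmetry via the boundary algebra (the paper packages the $D^Q$ computation using the projection identity $P^Q\bvec\sigma\cdot\bvec n=\bvec\sigma\cdot\bvec n(1-P^Q)$ and the $D^L$ computation using the $M_\mu^\pm$ matrices, but the content is identical to your direct Pauli-algebra check), and closedness via the quadratic form identities \eqref{eq:q-form-D-Q}--\eqref{eq:q-form-D-L} plus absorption of the boundary term. The only minor variation is that the paper uses the multiplicative trace inequality $\|\gamma u\|_{L^2(\Sigma)}^2\le C\|u\|_{H^1}\|u\|_{L^2}$ together with Young's inequality in place of your compactness/Ehrling argument; both yield the same graph-norm bound \eqref{eq:controlH1}.
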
 

\begin{proof}
Tanks to \eqref{eq:ibp} we have that $D^Q$ is symmetric if 
\[
\int_\bdry \inprodC{\sn_\bdry u}{v}=0,
\quad \text{for all}\  u,v\in D^Q. 
\]
Let $u,v \in\D(D^Q)$, then $P^Q\gamma u=P^Q\gamma v=0$, where $P^Q$ is defined in \eqref{eq:Q-dot-BC} for $\eta=\pi/2$. 
Moreover, since $\bvec{\sigma} \cdot \bvec{n}_\bdry$ anti-commutes with both $\sigma_3$ and $\bvec{\sigma} \cdot \bvec{t}_\bdry$, it anti-commutes with $A^Q$ and so
\[
P^Q\sn_\bdry =\sn_\bdry(1-P^Q).
\]
Thanks to this, since $P^Q$ is a hermitian matrix on $\C^2$, we can conclude that 
\[
0=\inprodC{P^Q \gamma u}{\sn_\bdry \gamma v}
=\inprodC{\sn_\bdry\gamma u}{ (1-P^Q)\gamma v}=
\inprodC{\sn_\bdry\gamma u}{ \gamma v}.
\]
Thus $D^Q$ is symmetric.

Let us analyse  $D^L$. Thanks to \eqref{eq:ibp} we have that $D^L$ is symmetric if and only if
\[
\int_\bdry 
\inprodC{\sn_\bdry u_+}{v_+}
-
\inprodC{\sn_\bdry u_-
}{v_-}=0,\quad
\text{for all}\ u, v 
\in\D(D^L).
\]
Since $\sn_\bdry$ anti-commutes with $\sigma_3$ we have that 
\[
\begin{array}{ccc}
(M^\pm_\mu)^*=M^\mp_\mu, &
(M^\pm_\mu)^{-1}=(\mu^2-1)^{-1}M^\pm_\mu, &
M^\pm_\mu\sn_\bdry=-\sn_\bdry M^\mp_\mu.
\end{array}
\]
Using these properties, the boundary condition can be rewritten as
\[  u_- (x) = - (M_\mu^-)^{-1} M_\mu^+ u_+(x) = (1-\mu^2)^{-1} M_\mu^- M_\mu^+ u_+(x) , \quad x \in \bdry\] and the same holds for $v$. Thus, we compute 
\begin{align*}
\inprodC{ u_-}{\sn_\bdry v_-} 
&= (1-\mu^2)^{-2} \inprodC{M_\mu^- M_\mu^+  u_+}{\sn_\bdry M_\mu^- M_\mu^+ v_+} \\
&= (1-\mu^2)^{-2} \inprodC{  u_+}{M_\mu^- M_\mu^+\sn_\bdry M_\mu^- M_\mu^+ v_+} \\
&= (1-\mu^2)^{-2} \inprodC{  u_+}{\sn_\bdry M_\mu^+ M_\mu^- M_\mu^- M_\mu^+ v_+} =  \inprodC{ u_+}{\sn_\bdry v_+} 
\end{align*}
Therefore, the boundary term vanishes and $D^L$ is symmetric.

Finally, to obtain the closedness of $D^Q$, we start from \eqref{eq:q-form-D-Q}.
There exists a constant $C_\bdry>0$, depending only on the curvature of $\bdry$, such that
\begin{align*}
\norm{u}_{H^1(\Omega )}^2 
&\le   \norm{D^Q u}_{L^2(\Omega)}^2 + \norm{u}_{L^2(\Omega)}^2 +  C_\bdry \norm{\gamma u}_{L^2(\bdry)}^2 \\
& \le \norm{D^Q u}_{L^2(\Omega)}^2 + \norm{u}_{L^2(\Omega)}^2 +  C_\bdry  \norm{ u}_{H^{1}( \Omega)} \norm{u}_{L^2( \Omega)} \\
 &\le \norm{D^Q u}_{L^2(\Omega)}^2 + (1 + \epsilon^{-1} C_\bdry)\norm{u}_{L^2(\Omega)}^2 +  \epsilon C_\bdry \norm{u}_{H^1(\Omega )}^2.
\end{align*}
Thus, taking $\epsilon$ sufficiently small, we can find a constant such that 
\begin{equation}\label{eq:controlH1}
\norm{u}_{H^1(\Omega)} \le C \left( \norm{D^Q u}_{L^2(\Omega)} + \norm{ u}_{L^2(\Omega )} \right) .
\end{equation}
Let $(u_n)_n\subset \D(D^Q)$ 
be a Cauchy sequence in the graph norm for $D^Q$.
 Then \eqref{eq:controlH1} implies that $(u_n)_n$ is a Cauchy sequence in $H^1(\Omega,\C^2)$ and so  there exists $u\in H^1(\Omega,\C^2)$ such that 
 $(u_n)\to u$ in $H^1$.
Since the boundary trace map $\gamma$ is continuous from $\D(D^Q)$ to $H^{1/2}(\bdry)$ , we have that $\gamma u_n\to \gamma u$, and so $\gamma u$ verifies the quantum-dot boundary conditions.
Thus, $D^Q$ is closed.

The proof for $D^L$ is analogous. We start this time from \eqref{eq:q-form-D-L} to conclude that there exists a constant such that there exists $C_\bdry>0$ only depending on $\bdry$ such that
to obtain 
\begin{align*}
 \norm{u}_{H^1(\Omega^+ )\times H^1(\Omega^-)}^2 
\le&   \norm{D^L u}_{L^2(\Omega^+) \times L^2(\Omega^-)}^2 + \norm{u}_{L^2(\Omega^+) \times L^2(\Omega^-)}^2+  C_\bdry \norm{\gamma u}_{L^2(\bdry)}^2 
\end{align*}
This expression can be bounded in a completely analogous way to obtain
\begin{equation}\label{eq:controlH1-L}
\norm{u_+}_{H^1(\Omega^+)}+\norm{u_-}_{H^1(\Omega^-)}  \le C \left( \norm{D^L u} + \norm{ u} \right) .
\end{equation}
Reasoning as before, we conclude that $D^L$ is closed.
\end{proof}

Now, we move on to study $\D(D^*)$. Since test functions are included in the domain of $D$, its adjoint $D^*$ acts, in distribution sense, as the differential expression $H$.
Therefore, the domain of the adjoint is included in the maximal domain of the elliptic differential expression $H$. Spinors in the maximal domain have boundary traces, as is the case for functions in the maximal domain of second order elliptic operators, see e.g, \cite[Sec 1.5.3]{grisvard}.
Furthermore, functions in the domain of the adjoint satisfy boundary conditions in a weak sense.

\begin{lemma} \label{lem:trace-ext}
Let $\mathcal{O}$ be a a curvilinear polygon of class $C^1$. The map $\dotprod{\sigma}{n} \gamma : H^1(\mathcal{O},\C^2) \to L^2(\partial \mathcal{O},\C^2)$ extends to a bounded map $T : \cK(\mathcal{O}) \to H^{-1/2}(\partial \mathcal{O},\C^2)$. 
\end{lemma}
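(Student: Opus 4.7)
The plan is to define the extended trace $T$ by duality, using the integration-by-parts formula \eqref{eq:ibp} as a definition rather than a consequence. Fix $u \in \cK(\mathcal{O})$ and $\varphi \in H^{1/2}(\partial \mathcal{O}, \C^2)$, and let $\Phi \in H^1(\mathcal{O}, \C^2)$ be an $H^1$-extension of $\varphi$, which exists for piecewise $C^1$ domains by standard trace theory (see \cite{grisvard}) and satisfies $\|\Phi\|_{H^1(\mathcal{O})} \le C\|\varphi\|_{H^{1/2}(\partial \mathcal{O})}$. I would set
\[
\langle T u , \varphi\rangle := -i\left( \inprod{u}{H\Phi}_{L^2(\mathcal{O})} - \inprod{Hu}{\Phi}_{L^2(\mathcal{O})}\right),
\]
where the right-hand side makes sense because $u, Hu \in L^2(\mathcal{O})$ and $\Phi, H \Phi \in L^2(\mathcal{O})$.

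The first step is to check that this definition does not depend on the choice of extension $\Phi$. If $\Phi_1, \Phi_2$ are two such extensions, then $w := \Phi_1 - \Phi_2 \in H^1_0(\mathcal{O},\C^2)$, so there exists a sequence $(w_n) \subset \mathscr{D}(\mathcal{O},\C^2)$ with $w_n \to w$ in $H^1$. For each $w_n$, the distributional identity $\inprod{u}{H w_n} = \inprod{H u}{w_n}$ holds by the very definition of $\cK(\mathcal{O})$. Passing to the limit in $n$ using $u, Hu \in L^2$, we conclude that $\inprod{u}{H w} = \inprod{H u}{w}$, so the proposed definition is unambiguous.

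Next I would establish boundedness. By Cauchy--Schwarz and the continuity of $H$ on $H^1$,
\[
|\langle T u , \varphi\rangle| \le \|u\|_{L^2(\mathcal{O})}\,\|H\Phi\|_{L^2(\mathcal{O})} + \|Hu\|_{L^2(\mathcal{O})}\,\|\Phi\|_{L^2(\mathcal{O})} \le C\,\|u\|_{\cK(\mathcal{O})}\,\|\Phi\|_{H^1(\mathcal{O})},
\]
and taking the infimum over all extensions $\Phi$ of $\varphi$ yields $|\langle Tu,\varphi\rangle| \le C\,\|u\|_{\cK(\mathcal{O})}\,\|\varphi\|_{H^{1/2}(\partial \mathcal{O})}$. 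Thus $Tu \in H^{-1/2}(\partial \mathcal{O},\C^2)$ and the map $T:\cK(\mathcal{O}) \to H^{-1/2}(\partial \mathcal{O},\C^2)$ is bounded.

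Finally, I would verify that $T$ extends $\dotprod{\sigma}{n} \gamma$. When $u \in H^1(\mathcal{O},\C^2)$, the identity \eqref{eq:ibp} applied with $v = \Phi$ gives
\[
-i\left(\inprod{u}{H\Phi} - \inprod{Hu}{\Phi}\right) = \int_{\partial \mathcal{O}} \inprodC{\sn \gamma u}{\varphi},
\]
which is precisely the action of the $L^2$-trace $\sn \gamma u$ viewed as an element of $H^{-1/2}$ via the $H^{-1/2}$--$H^{1/2}$ duality. Since $H^1(\mathcal{O})$ is dense in $\cK(\mathcal{O})$ for the graph norm (by standard mollification arguments inside $\mathcal{O}$ combined with an extension across $\partial \mathcal{O}$), $T$ is the unique bounded extension. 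The main (modest) technical point is the independence-of-extension step, which relies on $H^1_0$-density of $\mathscr{D}(\mathcal{O})$ and requires no regularity of $\partial \mathcal{O}$ beyond what is already assumed.
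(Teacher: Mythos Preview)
Your proposal is correct and follows essentially the same route as the paper: define the trace by the integration-by-parts formula \eqref{eq:ibp}, check boundedness via Cauchy--Schwarz, and verify agreement with $\dotprod{\sigma}{n}\gamma$ on $H^1$. The only minor technical difference is that the paper fixes a single bounded extension operator $\xi:H^{1/2}\to H^1$ and deduces independence of this choice from the density of $H^1(\mathcal{O})$ in $\cK(\mathcal{O})$ (cited from \cite{ourmieres2018strategy}), whereas you allow an arbitrary extension $\Phi$ and prove independence directly from the density of $\mathscr{D}(\mathcal{O})$ in $H^1_0(\mathcal{O})$; your argument for this step is slightly more self-contained.
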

We defer the proof of \ref{lem:trace-ext} to \Cref{sec:appendix}. We are ready now to give a first characterization of $\D(D^*)$.
\begin{proposition}\label{prop:def-D(D^*)}
Let $D^Q$ be the quantum-dot operator defined as in \eqref{eq:def-dom-DQ}. Then
\begin{equation}
\label{eq:def-dom-DQ^*}
\D((D^Q)^*) = \{u \in \cK(\Omega) : P^Q_\eta 
\dotprod{\sigma }{n} 
T u = 0\},
\end{equation}
where the boundary conditions hold in the sense that, for all $f \in H^{1/2}(\bdry,\C^2)$ such that $P^Q_\eta f \in H^{1/2}(\bdry,\C^2)$, we have 
\[
T u[(1-P^Q_\eta)  f] = 0.
\]
Let $D^L$ be the Lorentz-scalar $\delta$-shell operator defined as in \eqref{eq:def-dom-DL}. Then
\begin{equation}
\label{eq:def-dom-DL^*}
\D((D^L)^*) = \{ u=(u_+,u_-) \in \cK (\Omega^+ ) \times \cK (\Omega^- ) : P^L_\mu
 T u =0\},
\end{equation}
where the boundary conditions hold in the sense that, for all $f_+$ and $f_-$ in $  H^{1/2}(\bdry,\C^2)$ such that $M^\pm_\mu f_\pm \in  H^{1/2}(\bdry,\C^2)$, we have 
\[
T u_+[M^-_\mu f^+]+ Tu_-[M^+_\mu f_-]=0. 
\]
\end{proposition}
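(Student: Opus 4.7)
The plan is to carry out the adjoint computation via an extended integration by parts, interpreting the boundary term in the $H^{-1/2}$–$H^{1/2}$ duality given by \Cref{lem:trace-ext}. I sketch the argument for $D^Q$; the case of $D^L$ is parallel, obtained by repeating the same identity on $\Omega^+$ and $\Omega^-$.

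First, since $\mathscr{D}(\Omega, \C^2) \subset \D(D^Q)$, the defining identity of the adjoint applied to test functions shows that $(D^Q)^* u = H u$ in the distributional sense for every $u \in \D((D^Q)^*)$, so that $\D((D^Q)^*) \subset \cK(\Omega)$ automatically (and analogously $\D((D^L)^*) \subset \cK(\Omega^+) \times \cK(\Omega^-)$). Only the boundary condition remains to be identified. The key step is to extend the Green's identity \eqref{eq:ibp} to $u \in \cK$ and $v \in H^1$ in the form
\[
\inprod{u}{H v}_{L^2(\Omega)} - \inprod{H u}{v}_{L^2(\Omega)} = i\, T u[\gamma v],
\]
where $T u[\cdot]$ denotes the duality pairing. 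This is obtained by approximating $u \in \cK$ in the graph norm of $H$ by $H^1$ spinors (a standard density statement on Lipschitz, hence curvilinear polygonal, domains), for which \eqref{eq:ibp} already holds, and passing to the limit using the continuity of $T$ from \Cref{lem:trace-ext} together with the continuity of $\gamma: H^1 \to H^{1/2}$. Combined with the first step, this gives: $u \in \D((D^Q)^*)$ if and only if $u \in \cK(\Omega)$ and $T u[\gamma v] = 0$ for every $v \in \D(D^Q)$.

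It then remains to rewrite the vanishing condition in the form stated in the proposition. The surjectivity of $\gamma: H^1(\Omega, \C^2) \to H^{1/2}(\bdry, \C^2)$ identifies $\gamma(\D(D^Q))$ with $\{g \in H^{1/2}(\bdry, \C^2) : P^Q_\eta g = 0\}$, equivalently with $\{(1-P^Q_\eta) f : f \in H^{1/2}(\bdry, \C^2),\ P^Q_\eta f \in H^{1/2}(\bdry, \C^2)\}$, and so the vanishing condition becomes exactly $T u[(1 - P^Q_\eta) f] = 0$ for all admissible $f$. Since $\dotprod{\sigma}{n}$ anti-commutes with $A^Q_\eta$ and both $\dotprod{\sigma}{n}$ and $P^Q_\eta$ are self-adjoint on $\C^2$, one has $P^Q_\eta \dotprod{\sigma}{n} = \dotprod{\sigma}{n}(1 - P^Q_\eta)$, which is exactly the formal meaning of writing the boundary condition as $P^Q_\eta \dotprod{\sigma}{n} T u = 0$. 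For $D^L$ the same argument, performed separately on $\Omega^\pm$ (with outward normals of opposite sign producing the difference of two pairings), combined with the identities $(M^\pm_\mu)^* = M^\mp_\mu$ and $M^\pm_\mu \dotprod{\sigma}{n} = - \dotprod{\sigma}{n} M^\mp_\mu$ already used in the proof of \Cref{prop:symmetry}, yields the stated transmission duality $T u_+[M^-_\mu f_+] + T u_-[M^+_\mu f_-] = 0$.

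The main technical obstacle I expect is corner-related: because $\bvec{t}$ is discontinuous at vertices, the matrix-valued projection $P^Q_\eta$ and the operators $M^\pm_\mu$ do \emph{not} preserve $H^{1/2}(\bdry, \C^2)$, so $(1 - P^Q_\eta) f$ and $M^\pm_\mu f_\pm$ are typically only in $L^2$. This is precisely why the weak boundary condition must be formulated against the restricted class of test functions $f$ for which $P^Q_\eta f$ (respectively $M^\pm_\mu f_\pm$) remains in $H^{1/2}$. Verifying that this restricted class is still large enough to encode $T u \in H^{-1/2}$ — essentially by producing sufficiently many admissible $f$ supported away from the vertices and approximating arbitrary traces of $\D(D)$ by these — is the delicate point, and it is the reason the description is stated as a duality pairing rather than as an identity in $H^{-1/2}(\bdry, \C^2)$.
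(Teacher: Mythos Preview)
Your proposal is correct and follows essentially the same route as the paper: both arguments first use test functions to get $\D((D^Q)^*)\subset\cK(\Omega)$, then use the extended Green identity (which in the paper is precisely the \emph{definition} of $T$ in \eqref{eq:trace-ext-def}) together with a bounded right inverse $\xi:H^{1/2}(\bdry)\to H^1(\Omega)$ of the trace to identify $\gamma(\D(D^Q))$ with $\{(1-P^Q_\eta)f: f,\,P^Q_\eta f\in H^{1/2}\}$ and conclude. One small remark: your closing worry about the restricted test class being ``large enough'' is unnecessary here, since the set $\{(1-P^Q_\eta)f: f,\,P^Q_\eta f\in H^{1/2}\}$ is \emph{exactly} $\gamma(\D(D^Q))$ (take $f=\gamma u$, so $P^Q_\eta f=0\in H^{1/2}$), and both inclusions follow directly without any further approximation.
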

\begin{remark}
If $v\in \D((D)^*)$ is supported away from the corners, multiplication of $Tv$ by $\dotprod{\bvec \sigma }{n}$ and $P^Q_\eta$ makes sense and the boundary conditions hold in the usual sense.
\end{remark}
\begin{remark}
If $\mathcal C$ is the set of corner points of $\partial \mathcal{O}$, if $f$ and $P^Q_\eta f$ are in $H^{1/2}(\partial \mathcal{O})$, $f$ vanishes on $\mathcal C$ in the sense that it can be written as a $H^{1/2}$ limit of functions with compact support in $\partial \mathcal{O} \setminus \mathcal C$. 
\end{remark}

\begin{proof}[Proof of \Cref{prop:def-D(D^*)}.]
Since $H_0^1(\Omega, \C^2) \subset \D( D^Q) $, for $v \in  \D( (D^Q)^*)$ we have
  \[
  (D^Q)^* v = H v,
  \]
  which implies that  $\D( (D^Q)^*) \subset \cK(\Omega)$.
  
Set 
\[
\D^*:= \{u \in \cK(\Omega) : P^Q_\eta \dotprod{\sigma }{n} T u = 0\},
\]
and let us prove first that $\D^* \subset \D((D^Q)^*)$.
  Let $v \in \D^*$, then for all $u \in \D(D^Q)$, $\gamma u = (1-P^Q_\eta)\gamma u \in H^{1/2}(\bdry) $.
  By \eqref{eq:trace-ext-def}, 
  \[
\inprod{D^Q u}{v}_{L^2(\Omega)} - \inprod{u}{H v}_{L^2(\Omega)}  = 0,  
  \]
  so $v \in \D( (D^Q)^*)$.
  
  For the opposite inclusion, take $v \in \D((D^Q)^*)$ and $f \in H^{1/2}(\bdry,\C^2)$ such that $P^Qf \in H^{1/2}(\bdry,\C^2)$.
  Then $u \equiv \xi (1-P^Q) f \in \D(D^Q)$, where $\xi : H^{1/2}(\bdry,\C^2) \mapsto H^1(\Omega, \C^2)$ is a bounded extension operator, and therefore,
\[\begin{split}
T v [(1-P^Q_\eta) f ] = i \inprod{v}{D^Q u}_{L^2(\Omega)} - i \inprod{(D^Q)^* v}{ u}_{L^2(\Omega)} = 0. 
\end{split}
\]

The proof \eqref{eq:def-dom-DL^*} is completely analogous.
\end{proof}

The last ingredient for the proof of \Cref{prop:D(D^*)} is the following lemma.

\begin{lemma}  \label{lem:Fredholm}
For every $c\in\mathcal{C}$ let  $N_{\rho,c}$ be defined as in \eqref{eq:def.N_rho} and define
 $D_{\rho,c}^Q $ and $D_{\rho,c}^L$ as the action of $H$
on the domains
\begin{equation}\label{eq:def:D_rho,c}
\begin{split}
\D(D^Q_{\rho,c})& := \{ u \in H^1 (\Omega\cap B(c, \rho)) : u^E \in \D(D^Q) \};\\
 \D(D^L_{\rho,c})& :=
 \left\{
 u=(u^+,u^-)\in H^1 (\Omega^+\cap B(c, \rho),\C^2)\times H^1 (\Omega^-\cap B(c_i, \rho),\C^2): 
 u^E \in \D (D^L)
\right\};
 \end{split}
\end{equation} 
and let $D_{\rho,c}^*$ be its adjoint.
Then
\begin{enumerate}[label=(\roman*)]
\item \label{item:1-ker} $D_{\rho,c}$ is closed and symmetric;
\item \label{item:2-ker} $\Ran (D_{\rho,c})$ is closed and $\Ker(D_{\rho,c})=\{0\}$;
 \item \label{item:3-ker} $\Ker((D_{\rho,c}^*)^2)=N_\rho(c)$.
\end{enumerate} 
\end{lemma}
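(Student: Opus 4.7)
The plan is to treat the three items in order, with (i) and (iii) reducing to localized versions of the arguments from \Cref{prop:symmetry} and \Cref{prop:def-D(D^*)}, while (ii) contains the technical heart of the lemma. Throughout I work on $\Omega \cap B(c,\rho)$, whose boundary splits as $(\bdry \cap \overline{B(c,\rho)}) \cup (\partial B(c,\rho) \cap \overline{\Omega})$, and I use the fact that elements of $\D(D_{\rho,c})$ have vanishing trace on the interior arc $\partial B(c,\rho) \cap \Omega$, because the zero-extension of such an element must lie in $H^1(\Omega)$.

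For (i), apply the integration-by-parts identity \eqref{eq:ibp} to $u,v \in \D(D_{\rho,c})$. The boundary contribution over $\bdry \cap B(c,\rho)$ cancels by the quantum-dot (respectively Lorentz-scalar) projection exactly as in the proof of \Cref{prop:symmetry}, while the contribution over the arc vanishes because $u$ and $v$ trace to zero there. Symmetry follows. For closedness I reproduce verbatim the absorption argument leading to the $H^1$-estimate \eqref{eq:controlH1} (respectively \eqref{eq:controlH1-L}): graph-Cauchy sequences are then $H^1$-Cauchy, and the $H^1$-limit preserves both the boundary condition on $\bdry \cap B(c,\rho)$ and the vanishing trace on the arc, hence belongs to $\D(D_{\rho,c})$.

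For (ii), my plan is to establish the bounded-below estimate
\[
\norm{u}_{L^2(\Omega \cap B(c,\rho))} \le C \rho\, \norm{D_{\rho,c} u}_{L^2(\Omega \cap B(c,\rho))}, \qquad u \in \D(D_{\rho,c}),
\]
for $\rho$ small enough, from which triviality of the kernel is immediate and closedness of the range follows because a closed, bounded-below operator has closed range. Starting from \eqref{eq:q-form-D-Q} or \eqref{eq:q-form-D-L}, only the $\bdry \cap B(c,\rho)$-piece of the boundary integral survives, since $u$ vanishes on the arc. Bounding the curvature term by $\|\kappa\|_\infty \norm{u}_{L^2(\bdry \cap B(c,\rho))}^2$ and then applying a standard trace inequality of the form $\norm{u}_{L^2(\bdry)}^2 \le \varepsilon \norm{\nabla u}_{L^2}^2 + C_\varepsilon \norm{u}_{L^2}^2$ with $\varepsilon$ small, I absorb to obtain $\norm{\nabla u}_{L^2}^2 \lesssim \norm{D_{\rho,c} u}_{L^2}^2 + \norm{u}_{L^2}^2$. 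The final ingredient is a Poincar\'e inequality: since $u$ vanishes on the arc, which has positive one-dimensional measure, a rescaling argument against a fixed sector-like reference region gives $\norm{u}_{L^2}^2 \le C_P \rho^2 \norm{\nabla u}_{L^2}^2$; for $\rho$ small the $\norm{u}_{L^2}^2$ term absorbs into the left-hand side and yields the displayed estimate.

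For (iii), I would first mimic the proof of \Cref{prop:def-D(D^*)} applied on $\Omega \cap B(c,\rho)$: since $\D(D_{\rho,c})$ contains spinors with \emph{arbitrary} admissible boundary data on $\bdry \cap B(c,\rho)$ but only vanishing data on the arc, the adjoint $D_{\rho,c}^*$ acts as $H$ on $v \in \cK(\Omega \cap B(c,\rho))$ subject to the generalized boundary condition of \Cref{prop:def-D(D^*)} on $\bdry \cap B(c,\rho)$ and to \emph{no} condition on the arc. Consequently, $v \in \Ker((D_{\rho,c}^*)^2)$ if and only if both $v$ and $Hv$ lie in $\D(D_{\rho,c}^*)$ and $H^2 v = -\Delta v = 0$, which is precisely the definition of $N_\rho(c)$ in \eqref{eq:def.K_rho}. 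The main anticipated obstacle lies in (ii), specifically in making the absorption argument quantitative near the corner: one must secure a Poincar\'e constant that genuinely scales like $\rho^2$ uniformly in the opening angle at $c$, so that it dominates the fixed curvature constant $\|\kappa\|_\infty$ and forces $\rho$ small enough for all lower-order terms to be absorbed into $\norm{D_{\rho,c} u}_{L^2}^2$.
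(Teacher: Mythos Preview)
Your approach to (i) and (iii) matches the paper's; the interesting divergence is in (ii).

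The paper does \emph{not} prove a bounded-below estimate. Instead, it combines the a-priori inequality \eqref{eq:controlH1} with the compact embedding $\D(D_{\rho,c})\hookrightarrow L^2$ and Peetre's characterization of semi-Fredholm operators to conclude directly that $D_{\rho,c}$ has closed range and finite-dimensional kernel. Triviality of the kernel is then obtained by an algebraic trick: for an eigenfunction $u$ with eigenvalue $\lambda$ one computes, using \eqref{eq:ibp} and the boundary conditions,
\[
2\lambda\,\inprod{u}{\sigma_3 u}_{L^2}=\int_{\bdry}\inprodC{u^E}{u^E},
\]
(with an analogous positive-definite boundary pairing in the Lorentz-scalar case), so $\lambda=0$ forces $\gamma u=0$ and hence $u\equiv 0$ by (anti-)holomorphy of its components. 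This argument is insensitive to the size of $\rho$.

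Your route---quadratic-form identity, trace absorption, then a scaled Poincar\'e inequality exploiting the Dirichlet condition on the circular arc---is correct and in some ways more quantitative: it yields an explicit lower bound $\norm{D_{\rho,c}u}\gtrsim \rho^{-1}\norm{u}$, hence a spectral gap, whereas the paper only gets injectivity plus closed range. The trade-off is that your argument requires $\rho$ small enough (to absorb the fixed curvature constant against the $O(\rho^2)$ Poincar\'e constant), so you prove a slightly weaker statement than the lemma as written. Since the lemma is only invoked in the proof of \Cref{prop:D(D^*)}, where $\rho$ is at your disposal, this restriction is harmless. Your anticipated obstacle about uniformity in the opening angle is not really an issue: for a fixed corner $c$ the angle is fixed, and the rescaled domains converge to $W_\omega\cap B(0,1)$, so the Poincar\'e and trace constants stabilize.
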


\begin{proof}
Let us analyse $D_{\rho,c}^Q$. For all $u \in \D(D_{\rho,c}^Q)$, the extension by zero $u^E$ is in $\D(D^Q)$.
Thanks to this it is easy to see that $D_{\rho,c}^Q$ is symmetric.
By applying \eqref{eq:controlH1} to $u^E$, we find that a constant such that 
\begin{equation}\label{eq:controlH1-loc-op}
\norm{u}_{H^1(\Omega\cap B(c,\rho))} \le C \left( \norm{D_{\rho,c}^Q u}_{L^2(\Omega\cap B(c,\rho))} + \norm{ u}_{L^2(\Omega \cap B(c,\rho))} \right) .
\end{equation}
Let $(u_n)_n\subset \D(D_{\rho,c}^Q)$ 
be a Cauchy sequence in the graph norm for $D_{\rho,c}^Q$.
 Then \eqref{eq:controlH1-loc-op} implies that $(u_n^E)_n$ is a Cauchy sequence in $H^1(\Omega,\C^2)$ and so  there exists $u\in H^1(\Omega,\C^2)$ such that 
 $(u_n^E)\to u$ in $H^1$ and $\supp (u) \subset \overline{B(c, \rho)}$.
Since the boundary trace map $\gamma$ is continuous from $\D(D_{\rho,c}^Q)$ to $H^{1/2}(\bdry)$ , we have that $\gamma u_n\to \gamma u$, and so $\gamma u$ verifies the quantum-dot boundary conditions.
Thus, $D_{\rho,c}^Q$ is closed.

Next, since $\D(D_{\rho,c}^Q)$ is compactly embedded in $L^2(\Omega,\C^2)$ and $D_{\rho,c}^Q$ is closed, thanks to \eqref{eq:controlH1-loc-op}, and the Peetre characterization theorem for semi-Fredholm operators (see for instance \cite[Theorem 2.42]{taira}) 
we conclude that $D_{\rho,c}^Q$ is semi-Fredholm, i.e., $D_{\rho,c}^Q$ has closed range and a finite dimensional kernel.
Let us now prove that $\Ker (D_{\rho,c}^Q)=\{0\}$. Assume that $u$ is an eigenfunction of $D_{\rho,c}^Q$ with eigenvalue $\lambda \in \R$. Then we have $u^E \in \D(D^Q)$ and may apply \eqref{eq:ibp}.
We obtain 
\begin{align*}
2 \lambda \inprod{u}{\sigma_3 u}_{L^2(\Omega\cap B(c,\rho))} &= \inprod{D^Q u^E}{\sigma_3 u^E}_{L^2(\Omega )} - \inprod{u}{D^Q \sigma_3 u}_{L^2(\Omega )} \\
&= -i \int_{\bdry}    \inprodC{u^E}{\dotprod{\sigma}{n} \sigma_3 u^E} =  \int_{\bdry}    \inprodC{u^E}{\dotprod{\sigma}{t}  u^E},
\end{align*}
where in the last line we used
\begin{equation}\label{eq:sigma.n.t}
-i\dotprod{\sigma}{n}\sigma_3= \dotprod{\sigma}{t}.
\end{equation}
Using the boundary condition $P^Q \gamma u=0$, with $P^Q$ defined in \eqref{eq:Q-dot-BC} for $\eta=\pi/2$, finally gives
\begin{equation}\label{eq:doble.product.eigenvalueQD}
\begin{split}
2 \lambda \inprod{u^E}{\sigma_3 u^E}_{L^2(\Omega)} &=   
\int_{\bdry}    \inprodC{u^E}{ \dotprod{\sigma}{t}  u^E} =
\frac{1}{2}\int_{\bdry}    \inprodC{u^E}{ \seq{\dotprod{\sigma}{t},\id}  u^E} =
   \inprodC{u^E}{ \{\dotprod{\sigma}{t}, A_{\pi/2}^Q \}  u^E} \\
&= \int_{\bdry}    \inprodC{u^E}{  u^E}.
\end{split}
\end{equation}
If $\lambda = 0$, we conclude that $u\in \Ker D_{\rho,c}^Q$ implies that $\gamma u = 0$. In addition, the components of $u$ are (anti-)holomorphic in the interior of $\Omega$, which implies $u \equiv 0$.

We move on to the localized Lorentz scalar operator $D^L_{\rho,c}$. Again, it is symmetric and thanks to \eqref{eq:q-form-D-L}, there exists $C_\bdry>0$ only depending on $\bdry$ such that
to obtain 
\begin{align*}
 \norm{u}_{H^1(\Omega^+ \cap B(c,\rho))\times H^1(\Omega^-\cap B(c,\rho))}^2 
\le&   \norm{D^L u^E}_{L^2(\Omega^+) \times L^2(\Omega^-)}^2 + \norm{u^E}_{L^2(\Omega^+) \times L^2(\Omega^-)}^2+  C_\bdry \norm{\gamma u^E}_{L^2(\bdry)}^2 
\end{align*}
This expression can be bounded in a completely analogous way to obtain
\begin{equation}\label{eq:controlH1_L_loc_op}
\norm{u_+}_{H^1(\Omega^+\cap B(c,\rho))}+\norm{u_-}_{H^1(\Omega^-\cap B(c,\rho))}  \le C \left( \norm{D_{\rho,c}^Q u} + \norm{ u} \right) .
\end{equation}
Reasoning as before, we conclude that $D^L_{c,\rho}$ is closed and semi-Fredholm.

Now if $u$ is an eigenfunction for $D^L_{c,\rho}$ with eigenvalue $\lambda$, we apply the previous identity to $u_+$ and $u_-$ separately to obtain
\begin{align*}
2 \lambda \inprod{u^E}{\sigma_3 u^E}_{L^2(\Omega_+) \times L^2(\Omega_-)} & =  \int_{\bdry}    \inprodC{u_+^E}{\dotprod{\sigma}{t}  u_+} - \inprodC{u_-^E}{\dotprod{\sigma}{t}  u_-}.
\end{align*}
The boundary conditions give
\begin{align*}
\int_{\bdry}   & \inprodC{u_+^E}{\dotprod{\sigma}{t}  u_+^E} - \inprodC{\left(\cosh(\alpha) - \sinh(\alpha) \dotprod{\sigma}{t}  \right)u_+^E}{ \dotprod{\sigma}{t} \left(\cosh(\alpha) - \sinh(\alpha) \dotprod{\sigma}{t}  \right) u_+^E} \\
&= \int_{\bdry}    \inprodC{u_+^E}{\dotprod{\sigma}{t}\left(1 - \cosh^2(\alpha) - \sinh^2(\alpha) \right)  u_+^E} + \inprodC{u_+^E}{ 2 \sinh(\alpha) \cosh(\alpha)  u_+^E} \\
&= 2 \sinh(\alpha) \int_{\bdry}    \inprodC{u_+^E}{\left(- \sinh(\alpha)\dotprod{\sigma}{t} + \cosh(\alpha) \right) u_+^E}.
\end{align*}
Since $\cosh (\alpha) > \abs{\sinh(\alpha)}$, the matrix $- \sinh(\alpha)\dotprod{\sigma}{t} + \cosh(\alpha)$ is positive definite.  As before we deduce that if $\lambda = 0$,  the traces must vanish, which implies again $u_+=u_-=0$.

Finally, the proof of \emph{\ref{item:3-ker}} follows from the same reasoning as the proof of Proposition~\ref{prop:def-D(D^*)} and the fact that $H^2=-\Delta$.
\end{proof}

With these preliminaries, we can prove \Cref{prop:D(D^*)}.
\begin{proof}[Proof of \Cref{prop:D(D^*)}]
Fix $c \in \mathcal{C}$ and $\rho > 0$. We write $\phi_{\rho,c}(x) = \phi((x-c  )/\rho)$.
As we have established in \ref{prop:symmetry}, $\D(D)\subset \D(D^*)$. Let us prove that  $\cN_\rho(c) \subset \D(D^*) $.
We will denote $\mathcal{O}=\Omega$ or $\mathcal O=\Omega^+\times \Omega^-$ depending on the quantum-dot or Lorentz scalar case.
Let $\phi_{\rho,c} u^E \in \cN_\rho(c)$ with $u\in N_\rho(c)$, and $v \in \D(D)$.
Since $ N_{\rho}(c)\subset \D(D_{\rho,c}^*)$ by definition, we find
\begin{align*}
\inprod{D v}{ \phi_{\rho,c} u^E }_{L^2(\mathcal O)} & = 
\inprod{D \phi_{\rho,c}v}{  u^E }_{L^2(\mathcal O)} - \inprod{ v}{ i\left(\dotprod{\sigma}{}\nabla\phi_{\rho,c} \right) u^E }_{L^2(\mathcal O)}\\
&= \inprod{D_{\rho,c}(\phi_{\rho,c}v)}{  u }_{L^2(\mathcal O\cap B(c,\rho)} + \inprod{ i\left(\dotprod{\sigma}{}\nabla\phi_{\rho,c} \right)  v}{ u }_{L^2(\mathcal O\cap B(c,\rho)} \\
&= \inprod{v}{\phi_{\rho,c}D_{\rho,c}^*  u }_{L^2(\mathcal O\cap B(c,\rho)}  + \inprod{ i\left(\dotprod{\sigma}{}\nabla\phi_{\rho,c} \right) v}{  u }_{L^2(\mathcal O\cap B(c,\rho)} \\
&\equiv \inprod{v}{D^*(\phi_{\rho,c}  u^E) }_{L^2(\mathcal O)}.
\end{align*}

Now, we move on to the opposite inclusion.
Fix $w \in \D(D^*)$ and fix a corner $c$ and $\rho>0$. We will show that we can decompose $\phi_{\rho,c} w = \widetilde w^E + \phi_{\rho,c} u^E $, with $\widetilde w \in \D(D_{\rho,c})$ and  $u \in N_\rho(c)$. 
Denoting by $w^R$ the restriction of $w$ to $B(c,\rho)$ we have that $w^R \in \D(D^*_{\rho,c})$ and $D^*_{\rho,c} w^R = (D^* w)^R$.

Now, by \Cref{lem:Fredholm}, $D_{\rho,c}^{-1}:\Ran(D_{\rho,c})\to \D(D_{\rho,c})$ is well defined and bounded and $\Ran(D_{\rho,c})$ is a closed subspace of $L^2$. We decompose $D^*_{\rho,c} w^R$ by projecting on this subspace and its orthogonal:
$D^*_{\rho,c} w^R = D_{\rho,c} \tilde w +  v$,
with $v\in\Ran(D_{\rho,c})^\perp=\Ker(D_{\rho,c}^*)$.
We set $u:=w-\tilde w$ and claim that $u\in\Ker ((D_{\rho,c}^*)^2) $.
Indeed, since both $w^R$ and $\tilde{w}$ belong to $\D(D_{\rho,c}^*)$, we have $u \in \D(D_{\rho,c}^*)$. In addition, we have
\[
H u = H w - H \tilde w = v \in \Ker(D_{\rho,c}^*),
\] 
so $u \in\Ker ((D_{\rho,c}^*)^2)=N_\rho(c)$.
Thus, we have obtained the required decomposition for $\phi_{\rho,c} w$.
If there is more than one corner, we repeat the previous argument with $(1-\phi_{\rho,c} )w$.
Iterating the argument for each corner, we are left with a decomposition
\[
w = \sum_{c \in \mathcal{C}} \left( \widetilde w_c^E + \phi_{\rho,c} u^E\right) + \left( \prod_{c \in \mathcal{C}}(1-\phi_{\rho,c}) \right)u. 
\]
The last term is localized away from all the corners. By the result for smooth domains (see \cite{ourmieres2018strategy}), it is in $H^1$.
\end{proof}

\section{Separation of variables in the wedge} \label{sec:separation}
In this section, we study $N_{\rho}(c)$ for the case that $\Omega \cap B(c,\rho)$ coincides with a truncated wedge with opening angle $\omega$.
We first give some definitions and results. In Subsection 3.1, we obtain a precise description of $N_{\rho}(c)$. In subsection 3.2, we use this description to classify self-adjoint extensions for domains with straight edges close to the corners. At the end of this subsection, we also discuss the behaviour of these extensions under charge conjugation.
Without loss of generality, we can pick coordinates such that $c$ is located at the origin.
In standard polar coordinates $(r, \theta)$ defined by
\[
r:=\sqrt{x_1^2+x_2^2},\quad
\theta:=
\operatorname{sign}(x_2)\arccos\left(\frac{x_1}{r}\right),
\]
the neighborhood of the corner coincides with the wedge $W_\omega$, defined as
\begin{equation}\label{eq:straight_corner}
\Omega \cap B(0, \rho)=W_\omega\cap B(0, \rho),\qquad W_\omega :=\seq{
(r,\theta)\in\C^2: r>0, 0<\theta<\omega}.
\end{equation}
In order to express the Dirac operators in polar coordinates, we define
\[
\bvec{e_r}:=\left(\frac{x_1}{r},\frac{x_2}{r}\right),\quad
\bvec{e_\theta}:=\left(-\frac{x_2}{r},\frac{x_1}{r}\right).
\]
Furthermore, we abbreviate  $\de_r=\bvec{e_r}\cdot{\nabla}$ and $\de_\theta=\bvec{e_\theta}\cdot{\nabla}$,
and obtain
\begin{equation}\label{eq:polar_dirac}
H = -i (\dotprod{\sigma}{e_r} \partial_r + r^{-1}\dotprod{\sigma}{e_\theta}\partial_\theta ) = -i \dotprod{\sigma}{e_r} \left(\partial_r  + i r^{-1}\sigma_3 \partial_\theta \right),
\end{equation}
where in the last equality we use \eqref{eq:sigma.n.t}.

We will need the following functions in order to state our results.
\begin{definition}\label{def:f_k}
Let $\omega\in (0,2\pi)\setminus\seq{\pi}$, and $\alpha\neq 0$.
\begin{itemize}
\item \underline{Quantum-dot}: for all $k\in \Z$, define $f_k^Q:[0,\omega]\to\C^2$ as follows
\[
f_k^Q (\theta) = 
 \frac{1}{\sqrt{2 \omega}\,}\begin{pmatrix}
 e^{i \lambda_k^Q \theta} \\   e^{-i \lambda_k^Q \theta}
\end{pmatrix}, \quad \lambda_k^Q = (2k+1) \frac{\pi}{2 \omega} -1/2.
\]

\item \underline{Lorentz-Scalar}: for all $k\in \Z$ set $f_k^L=(f_{k,+}^L,f_{k,-}^L):[0,\omega]\to\C^4$ with
\[
\begin{split}
f_{k,+}^L(\theta) &:=c_k
 \begin{pmatrix}
\left(\eta_k^L e^{-i\omega/2(\lambda^L_k+1/2)}\right) e^{i \lambda^L_k\theta}\\
\left(i e^{i\omega/2(\lambda^L_k+1/2)}\right) e^{-i \lambda^L_k\theta}\\
\end{pmatrix},\\
f_{k,-}^L(\theta)&:= 
c_k  \begin{pmatrix}
e^{-2i\pi\lambda_k^L}
\left(\eta_k^L\cosh(\alpha) e^{-i\omega/2(\lambda_k^L+1/2)}
-i\sinh(\alpha) e^{i\omega/2(\lambda_k^L+1/2)}\right)
e^{i\lambda_k^L\theta}
\\
e^{-2i\pi\lambda_k^L}
\left(-\eta_k^L\sinh(\alpha) e^{-i\omega/2(\lambda_k^L+1/2)}+i\cosh(\alpha) e^{i\omega/2(\lambda_k^L+1/2)}\right)
e^{-i\lambda_k^L\theta}
\end{pmatrix},
\end{split}
\]
where $\lambda_k^L, k \in \Z$  are the solutions to the transcendental equation 
\[
\abs{\tanh(\alpha)} 
\equiv \frac{2 \abs{\mu}}{1+\mu^2} 
= \frac{\abs{\cos(\pi (\lambda_k^L + 1/2))}}{\abs{\sin((\pi -\omega) (\lambda_k^L+1/2))}} ,
\]
counted in such a way that $\lambda_0^L$ is the unique solution in $(-1/2,0)$ and such that $-\lambda_k^L-1=\lambda^L_{-k-1}$; and
\begin{align*}
\eta^L_k&:=\sign\left(\alpha\sin\left((\pi-\omega)(\lambda_k^L+1/2)\right)\right);
\\
c_k^L&:= e^{i\eta_k^L \pi/4}
\left[2\cosh(\alpha)
	\left(\cosh(\alpha)-\sinh(\alpha)
		\eta_k^L\sin\left(\omega(\lambda_k^L+1/2)\right)\right)
		\right]^{-1/2}.
\end{align*} 
\end{itemize}
Finally let 
\begin{equation}\label{eq:def:u_k}
u_k(r,\theta):=
\phi(r/\rho) r^{\lambda_k} f_k(\theta).
\end{equation}
being $\phi$ the cut-off  function defined in \eqref{eq:def.cutoff}.
\end{definition}

With all definitions in place, we can give a precise description of $\cN_\rho(0)$.
\begin{theorem} \label{prop:quotient}
Let $\omega\in (0,2\pi)\setminus\seq{\pi}$, $\Omega$ as in \eqref{eq:straight_corner}. Let $\cN_\rho(0)$ be defined as in \eqref{eq:def.N_rho} and $u_k$ be defined as in \eqref{eq:def:u_k}. 
Then

\[
\cN_\rho(0) \diagup \left(\cN_\rho(0) \cap H^1\right) = \Span \{ u_k : \lambda_k \in (-1,0] \}.
\]
\end{theorem}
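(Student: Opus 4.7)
My plan is to use separation of variables in polar coordinates around the corner. Since $\Omega \cap B(0, \rho) = W_\omega \cap B(0, \rho)$ is dilation-invariant and the boundary conditions are constant along each of the two rays, the ansatz $u(r,\theta) = r^\lambda f(\theta)$ reduces $\Delta u = 0$ together with $P \gamma u = P \gamma Hu = 0$ to a $4 \times 4$ algebraic system in the coefficients of $f$, parametrized by $\lambda$. The $u_k$ correspond to the nontrivial separated solutions whose exponent lies in $(-1, 0]$.

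For the inclusion $\supseteq$, I verify directly that each $u_k$ with $\lambda_k \in (-1, 0]$ belongs to $\cN_\rho(0) \setminus H^1$. In the quantum-dot case, the two components of $r^{\lambda_k^Q} f_k^Q$ coincide with $z^{\lambda_k^Q}$ and $\bar z^{\lambda_k^Q}$ on a suitable branch of $W_\omega$, so one is holomorphic and the other antiholomorphic. Consequently $\Delta u_k = 0$ and $H u_k = 0$ on the support of $\phi$, trivializing the condition $P^Q_\eta \gamma H u_k = 0$; the remaining condition $P^Q_\eta \gamma u_k = 0$ at $\theta = 0$ and $\theta = \omega$ reduces to the eigenvalue relation $e^{i(2\lambda_k^Q + 1)\omega} = -1$, which holds by the definition of $\lambda_k^Q$. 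A direct polar-Jacobian computation gives $u_k \in L^2$ iff $\lambda_k > -1$ and $u_k \in H^1$ iff $\lambda_k > 0$, while distinct $\lambda_k$ produce linearly independent singularities modulo $H^1$. The Lorentz-scalar case is analogous: $H u_k^L = 0$ on each side of $\bdry$, and the transmission condition $P^L_\mu \gamma u_k^L = 0$ leads to a $4\times 4$ system linking the inner- and outer-wedge amplitudes whose determinant is precisely the transcendental equation in Definition~\ref{def:f_k}.

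For the converse inclusion, I first classify all separated solutions $r^\lambda f(\theta) \in N_\rho(0)$. Imposing the four boundary conditions on the harmonic ansatz $f = (A_1 e^{i\lambda\theta} + B_1 e^{-i\lambda\theta}, A_2 e^{i\lambda\theta} + B_2 e^{-i\lambda\theta})$ produces a characteristic equation that factors into two parts: a first family at $\lambda = \lambda_k$ for which $Hu = 0$ and $f$ is as in Definition~\ref{def:f_k}; and a second family at a shifted exponent $\lambda' = \lambda_k + 1$ for which $Hu \sim r^{\lambda'-1}$ is nonzero. Because the constraint $Hu \in L^2$ forces $\lambda' > 0$ in the second family, such modes always lie in $H^1$; hence only the first-family modes with $\lambda_k \in (-1, 0]$ contribute singular functions to $\cN_\rho(0) \setminus H^1$. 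To reach a general $u \in N_\rho(0)$, I apply a Mellin transform in the radial variable (equivalently a Fourier transform in $t = \log r$): $\hat u(\lambda, \theta)$ satisfies $\partial_\theta^2 \hat u + \lambda^2 \hat u = 0$ with the same boundary conditions, and the constraints $u, Hu \in L^2$ force $\hat u$ to be holomorphic in a right half-plane with meromorphic continuation to $\Re \lambda > -1$ whose only poles in the strip $-1 < \Re \lambda \le 0$ are the first-family $\lambda_k$. Shifting the inversion contour across these poles produces the decomposition $u = \sum_{\lambda_k \in (-1, 0]} c_k u_k + w$ with $w \in H^1$ given by the deformed contour integral. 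The main technical obstacle is executing this shift rigorously on the bounded radial interval $(0, \rho]$: one localizes by $\phi$ and extends by zero to the infinite wedge, then tracks the commutator $[\phi, H]$, whose contributions are smooth and can be absorbed into $w$ by standard Kondrat'ev--Grisvard-type estimates.
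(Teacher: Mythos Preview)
Your outline is correct, but for the converse inclusion you take a genuinely different route from the paper. The paper avoids the Mellin transform and contour-shift machinery entirely by exploiting Lemma~\ref{lem:basis-eigen}: the angular operator $D_{\mathrm{ang}} = -i\sigma_3\partial_\theta$, with the boundary conditions induced by $P\gamma u = 0$, is \emph{self-adjoint} with compact resolvent, so the $\{f_k\}$ form a complete orthonormal basis of the angular Hilbert space. Every $u \in N_\rho(0)$ then has an $L^2$-convergent expansion $u(r,\theta) = \sum_k h_k(r) f_k(\theta)$, harmonicity becomes the scalar radial ODE $h_k'' + r^{-1}h_k' - r^{-2}\lambda_k^2 h_k = 0$ mode by mode, and the constraints $u, Hu \in L^2$ become algebraic conditions on the coefficients of $r^{\pm\lambda_k}$. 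Showing the leftover series are in $H^1$ is a short Parseval computation using the orthonormality of the $f_k$. Your ``two families'' are exactly the $a_k r^{\lambda_k}$ and $b_k r^{-\lambda_k}$ terms in this expansion, so you have recovered the same structure but via the harder Kondrat'ev path. Your Mellin argument should go through, but the step you flag as the main obstacle (localizing to the bounded radial interval and absorbing the commutator $[\phi,H]$) is precisely where the bookkeeping accumulates; the angular-basis route in the paper sidesteps all of this because completeness of the $f_k$ delivers the decomposition directly, with no analytic continuation needed.
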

For $\omega < \pi$ (a convex corner), there are none of the $\lambda^Q_k$'s in $(-1, 0]$, while for $\omega > \pi$, we have only $ \lambda^Q_0$ and $\lambda^Q_{-1}$ in $(-1,0]$.
In the Lorentz-Scalar case,  $ \lambda^L_0$ and $\lambda^L_{-1}$ lie in $(-1,0]$, regardless of the value of $\omega$.
Thanks to this and combining \Cref{prop:D(D^*)} and \Cref{prop:quotient} we directly have the following results:

\begin{proposition}\label{prop:quotient-adj} 
Let $\omega\in (0,2\pi)\setminus\seq{\pi}$, let $\Omega $ be a piecewise $C^2$ domain with a single, straight corner of opening $\omega$, that is $\Omega$ verifies \eqref{eq:straight_corner}. Let $D^Q$ and $D^L$ be defined respectively as in \eqref{eq:def-dom-DQ-R} and \eqref{eq:def-dom-DL-R} and let $u_k$ be defined  as in \Cref{def:f_k}. Then
\begin{itemize}
\item \underline{Quantum-dot}:
 \begin{enumerate}[label=(\roman*)]
\item for $0 < \omega < \pi$:
\[
\D((D^Q)^*) = \D(D^Q);
\]
\item for $\pi < \omega <2 \pi$:
\[
\D((D^Q)^*) = \D(D^Q) + \operatorname{span} (u_0^Q, u_{-1}^Q),
\]

\end{enumerate}
\item[]

\item \underline{Lorentz-Scalar}:
For $\omega \neq \pi$: 
\[
\D((D^L)^*) = \D(D^L) + \operatorname{span} (u_0^L, u_{-1}^L).
\]
\end{itemize}
\end{proposition}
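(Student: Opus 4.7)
The plan is to assemble the two main structural theorems already established. Since the hypothesis \eqref{eq:straight_corner} places a single corner at the origin, \Cref{prop:D(D^*)} reduces to $\D(D^*) = \D(D) + \mathcal{N}_\rho(0)$, while \Cref{prop:quotient} identifies a finite-dimensional complement of $\mathcal{N}_\rho(0) \cap H^1$ inside $\mathcal{N}_\rho(0)$ spanned by the singular functions $u_k$ with $\lambda_k \in (-1, 0]$. Combining the two identities would then yield
\[
\D(D^*) = \D(D) + (\mathcal{N}_\rho(0) \cap H^1) + \Span\{u_k : \lambda_k \in (-1, 0]\}.
\]
The opposite inclusion $\D(D) + \Span\{u_k\} \subset \D(D^*)$ comes for free from the containments $\D(D) \subset \D(D^*)$ and $\mathcal{N}_\rho(0) \subset \D(D^*)$, the latter having been verified inside the proof of \Cref{prop:D(D^*)}.

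The only intermediate step not supplied by those two theorems is the absorption of the middle term, namely $\mathcal{N}_\rho(0) \cap H^1 \subset \D(D)$. I would prove this by inspecting \eqref{eq:def.N_rho}: any $v \in \mathcal{N}_\rho(0)$ has the form $v = \phi(\cdot/\rho)\, u^E$ with $u \in N_\rho(0)$, so if $v \in H^1$ then its trace on $\Sigma$ equals $\phi(\cdot/\rho)\, \gamma u$ on $\Sigma \cap B(0, \rho)$ and is identically zero outside $\supp \phi$. Since $P \gamma u = 0$ on $\Sigma \cap B(0, \rho)$ is built into the definition \eqref{eq:def.K_rho}, it follows that $P \gamma v = 0$ on all of $\Sigma$, i.e.\ $v \in \D(D)$. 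In the Lorentz-scalar case the argument is applied componentwise to $v = (v^+, v^-)$, with $P^L_\mu$ linking the two traces.

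It remains to read off which $\lambda_k$ land in $(-1, 0]$, using the count recorded in the paragraph preceding the statement. For the quantum-dot operator, $\lambda_k^Q = (2k+1)\pi/(2\omega) - 1/2$: when $\omega < \pi$ all of these values lie outside $(-1, 0]$, whereas when $\omega \in (\pi, 2\pi)$ only $\lambda_0^Q$ and $\lambda_{-1}^Q$ do. For the Lorentz-scalar operator, the conventions of \Cref{def:f_k} place $\lambda_0^L$ in $(-1/2, 0)$, and the symmetry $\lambda_{-k-1}^L = -\lambda_k^L - 1$ then places $\lambda_{-1}^L$ in $(-1, -1/2)$, while all other $\lambda_k^L$ lie outside $[-1, 0]$. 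Substituting into the decomposition above yields the three stated cases. Since the analytic content sits entirely in \Cref{prop:D(D^*)} and \Cref{prop:quotient}, I do not expect any real obstacle at this assembly stage; the argument is essentially bookkeeping and the only conceptual point is the elementary trace computation above.
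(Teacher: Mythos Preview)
Your proposal is correct and follows exactly the route the paper itself takes: the paper states this proposition as a direct consequence of \Cref{prop:D(D^*)} and \Cref{prop:quotient} together with the counting of the $\lambda_k$'s in $(-1,0]$, without a separate written proof. Your additional justification that $\mathcal{N}_\rho(0)\cap H^1\subset \D(D)$ via the trace of $\phi(\cdot/\rho)u^E$ is a welcome piece of bookkeeping that the paper leaves implicit.
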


\subsection{Proof of \texorpdfstring{\Cref{prop:quotient}}{Theorem 3.2}}
In order to prove \Cref{prop:quotient}, we need the following lemma, whose proof is left for \Cref{sec:appendix-angular}.
\begin{lemma}\label{lem:basis-eigen}
Let $\seq{f_k(\theta)}_{k\in\Z}$ and $\seq{\lambda_k}_{k\in\Z}$ be defined as in \Cref{def:f_k} and
define the angular operators $D^Q_{ang}$ and $D^L_{ang}$ as the action of $-i\sigma_3\partial_\theta$ on
\begin{align*}
\D(D^Q_{ang})&:=\seq{
f\in H^1([0, \omega], \C^2):
f_2(0) = f_1(0), \ f_2(\omega) = -  e^{i \omega} f_1(\omega)},
\\
\D(D^L_{ang})&:=
\seq*{
\begin{array}{l}
(f^+,f^-)\in H^1([0, \omega], \C^2)\times H^1([\omega,2\pi], \C^2):\\ \\
f_-(2\pi)=\left(
\begin{smallmatrix}  
\cosh(\alpha) & - \sinh(\alpha) \\ -\sinh(\alpha) & \cosh (\alpha)
\end{smallmatrix} 
\right)f_+(0),\quad
f_-(\omega) = 
\left(
\begin{smallmatrix}
  \cosh(\alpha) & e^{-i \omega} \sinh(\alpha) \\ e^{i \omega} \sinh(\alpha) & \cosh (\alpha)\end{smallmatrix}\right)
   f_+(\omega)
\end{array}};
\end{align*}
respectively, in the Hilbert spaces
\begin{align*}
\H^Q&:=\left(L^2([0,\omega],\C^2);\langle\cdot,\cdot\rangle_{L^2}\right),\qquad
\H^L:=\left(L^2([0,\omega],\C^2)\times L^2([\omega,2\pi],\C^2); \langle\cdot,\cdot\rangle_{L^2}\right).
\end{align*}
Then $D_{ang}$ is a self-ajdoint operator with an orthonormal basis of eigenfunctions $\seq{f_k}_{k\in\Z}$ and eigenvalues $\lambda_k$.
Moreover, for any $k\in\Z$
\begin{equation}\label{eq:sigma.er.f_k}
\dotprod{\sigma}{e_r}f_k=f_{-k-1}.
\end{equation}
\end{lemma}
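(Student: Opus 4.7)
The plan is to treat both angular operators by explicit diagonalization, since they are first-order ODE systems on bounded intervals with regular boundary conditions. I would proceed in three stages: verify symmetry via integration by parts, solve the eigenvalue problem by separation into exponential modes, and finally establish orthonormality and completeness together with the covariance identity. For the symmetry step, integration by parts yields
\[
\inprod{D_{ang}f}{g} - \inprod{f}{D_{ang}g} = -i\bigl[\inprodC{\sigma_3 f}{g}\bigr]_{\partial},
\]
and because $\sigma_3$ is diagonal the bracket reduces to expressions of the form $\overline{f_1} g_1 - \overline{f_2} g_2$. For $D^Q_{ang}$ at $\theta=0$ the condition $f_2(0) = f_1(0)$ immediately cancels the contribution, while at $\theta=\omega$ the fact that $|-e^{i\omega}|=1$ yields the same cancellation. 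For $D^L_{ang}$, writing each matching condition as $f_- = M f_+$, a short direct check using $\cosh^2\alpha - \sinh^2\alpha = 1$ shows that $M^* \sigma_3 M = \sigma_3$; the boundary contributions at $\omega$ and at $2\pi$ then cancel pairwise against their matching partners.

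For the eigenvalue problem, the system $-i\sigma_3 \partial_\theta f = \lambda f$ decouples into $f_1 = a e^{i\lambda\theta}$ and $f_2 = b e^{-i\lambda\theta}$. For $D^Q_{ang}$, the two boundary conditions force $b = a$ and $e^{-2i\lambda\omega} = -e^{i\omega}$, whose set of solutions is precisely $\{\lambda^Q_k\}_{k \in \Z}$; the corresponding normalized eigenfunction is exactly $f_k^Q$. For $D^L_{ang}$, one has four unknowns $(a^+, b^+, a^-, b^-)$ and two $2\times 2$ matching conditions, producing a $4\times 4$ linear system. The determinantal compatibility condition simplifies using the identities $(e^{i\beta}+1)(e^{-i\beta}+1) = 4\cos^2(\beta/2)$ with $\beta/2 = (\pi-\omega)(\lambda+1/2) - \pi/2$, and $\sin^2(\pi\lambda) = \cos^2(\pi(\lambda+1/2))$, reducing exactly to the stated transcendental equation involving $\tanh\alpha$. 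Back-substituting into the null space and enforcing $\|f_k^L\|_{\H^L} = 1$ produces the explicit normalization $c_k^L$; this is the most laborious step and requires a careful reduction using the transcendental equation itself.

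For completeness and the covariance identity: $D_{ang}$ is a closed, symmetric first-order differential operator on a bounded interval with maximally isotropic boundary conditions, hence it has compact resolvent (one constructs an explicit Green's function, or equivalently applies the standard spectral theory for $D_{ang}^2$, which is a second-order elliptic operator with regular boundary conditions). Consequently its spectrum is purely discrete, the normalized eigenfunctions span $\H$, and symmetry together with an ONB of eigenfunctions implies self-adjointness. For the identity $\dotprod{\sigma}{e_r} f_k = f_{-k-1}$, I observe that in polar coordinates
\[
\dotprod{\sigma}{e_r} = \begin{pmatrix} 0 & e^{-i\theta} \\ e^{i\theta} & 0 \end{pmatrix},
\]
so applying it to $f_k$ swaps the components and shifts each exponent by $\mp\theta$. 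Combined with the indexing relation $\lambda_k + \lambda_{-k-1} = -1$ (arithmetic in the quantum-dot case; in the Lorentz case following from the invariance of the transcendental equation under $\lambda \mapsto -\lambda-1$) and the chosen sign $\eta_k^L$ and phase $c_k^L$, the resulting spinor is precisely $f_{-k-1}$. The main obstacle throughout is the bookkeeping in the Lorentz case: ensuring that the normalization constants and sign factors in $f_k^L$ are compatible both with unit normalization and with the covariance identity holding on the nose, rather than up to a residual phase.
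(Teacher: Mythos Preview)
Your proposal is correct and follows essentially the same route as the paper: integration by parts for symmetry, explicit exponential ansatz for the eigenproblem, compact resolvent for completeness, and the relation $\lambda_k+\lambda_{-k-1}=-1$ together with the explicit phase choice for the covariance identity. The only organizational difference is that the paper first establishes self-adjointness directly by computing the adjoint domain (showing any $g\in\D(D_{ang}^*)$ must satisfy the same boundary conditions) and then deduces the eigenbasis from compact resolvent, whereas you reverse the order and infer self-adjointness from having found an orthonormal basis of eigenfunctions; both arguments are standard and equivalent here.
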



We can now prove \Cref{prop:quotient}
\begin{proof}[Proof of \Cref{prop:quotient}]
Take $u \in N_\rho(0)$. It has a decomposition in angular eigenfunctions
\[
u(r,\theta) = \sum_{k\in \Z} h_k(r) f_k(\theta) \quad \text{ with } (h_k)_{k\in \Z} \in \ell^2(\Z, \H ).
\]
Since $\Delta u = 0$, which reduces to
\[
h_k''(r) + r^{-1} h_k'(r) -r^{-2}\lambda_k^2 h(r) = 0.
\]
The solution of this equation is $h_k(r) = a_k r^{\lambda_k} + b_k r^{-\lambda_k}$ with some coefficients $a_k, b_k$ in $\C$.
Since $u \in L^2$, we have $a_k = 0$ if $\lambda_k \le - 1$ and $b_k = 0 $ if $\lambda_k \ge 1$.
Therefore, we obtain
\[
u(r,\theta) = \sum_{\lambda_k >- 1} a_k r^{\lambda_k} f_k(\theta) +  \sum_{\lambda_k <  1} b_k r^{-\lambda_k} f_k(\theta).
\]
We also now that $H u = D^*_{\rho,0} u$ is in $L^2$.
By construction, 
\[
H h(r) f_k(\theta) = -i \dotprod{\sigma}{e_r} \left(\partial_r  - D_{\rm ang} \right) h(r) f_k(\theta) = -i(h'(r) - \lambda_k r^{-1} h (r)) f_{-k}(\theta),
\]
so we obtain
\begin{equation} \label{eq:Hu}
H u (r,\theta) = \sum_{\lambda_k <  1} (-2i \lambda_k )b_k r^{-\lambda_k-1} f_{-k}(\theta).
\end{equation}
In order to be square integrable close to the origin, we need $b_k = 0$ for all values of $k$ with $\lambda_k > 0 $.
Therefore, we have obtained a decomposition
\[
u(r,\theta) = \sum_{\lambda_k \in (-1, 0)} a_k r^{\lambda_k} f_k(\theta) + \sum_{\lambda_k \ge 0} a_k r^{\lambda_k} f_k(\theta)+  \sum_{\lambda_k \le  0} b_k r^{-\lambda_k} f_k(\theta).
\]
Individual terms in each of the last two series are in $H^1$, but we still need to show that the same holds true for the sum.
We write
\[
v = \sum_{\lambda_k \ge 0} a_k r^{\lambda_k} f_k(\theta), \qquad
w =  \sum_{\lambda_k \le  0} b_k r^{-\lambda_k} f_k(\theta)
\]
and treat each of them separately.
For $w$, we use the orthonormality of the angular functions to write (with the understanding that both sides may equal $+\infty$)
\begin{align*}
\norm{\nabla w}_{L^2}^2 
&= \norm{\partial_r w}_{L^2}^2 + \norm{r^{-1}D_{\rm ang} w}_{L^2}^2 \\
&= \sum_{\lambda_k \le 0} \abs{b_k}^2 2\lambda_k^2 \norm{r^{-\lambda_k-1} f_k(\theta)}_{L^2}^2 \\
&= 1/2 \sum_{\lambda_k \le 0} \norm{ Hr^{-\lambda_k-1} f_k(\theta)}_{L^2}^2 = 1/2 \norm{Hu}^2_{L^2}  ,
\end{align*}
where the last line follows from \eqref{eq:Hu}.
Since $H u$ is square integrable, this shows that $w \in H^1$.
Now for $v$, we use the fact that $u$ is in $H^1$ when localized away from the corner and from $\partial B(0, \rho)$, by the result for smooth domains.
Restricting to the Lorentz scalar case for simplicity of notation, we have that 
\begin{align*}
 \norm{\nabla v}^2_{L^2(\Omega \cap B(0,2\rho/3) \setminus B(0, \rho/3))} 
&=  \sum_{\lambda_k > 0} \abs{a_k}^2 2\lambda_k^2 \norm{r^{\lambda_k-1} f_k^Q(\theta)}_{L^2(\Omega \cap B(0,2\rho/3) \setminus B(0, \rho/3))}^2 \\
&= \sum_{\lambda_k > 0} \abs{a_k}^2 2\lambda_k^2 (2 \lambda_k)^{-1} \left((2\rho/3)^{2 \lambda_k} - (\rho/3)^{2 \lambda_k} \right) \\
& \ge 1/C \sum_{\lambda_k > 0} \abs{a_k}^2 2\lambda_k^2 (2 \lambda_k)^{-1} (2\rho/3)^{2 \lambda_k} \\
&= 1/C \norm{\nabla v}^2_{L^2(\Omega \cap B(0,2\rho/3) )}. 
\end{align*}
Here, $1/C \equiv 1- (1/2)^\lambda$ with $\lambda$ the smallest positive $\lambda_k$.
This is sufficient to conclude, since $\supp (\phi_{\rho,0}) \subset B(0, 2\rho/3)$.
\end{proof}

\subsection{Characterization of self-adjoint extensions of \texorpdfstring{$D^Q$}{DQ} and \texorpdfstring{$D^L$}{DL}}

We can now describe all self-adjoint extensions of $D^Q$ and $D^L$ for domains with straight edges in a neighborhood of each corner.
For simplicity, we state the theorem for domains with a single corner, but the generalization is straightforward.

\begin{theorem}\label{thm:wedge-full}
Let $\omega\in (0,2\pi)\setminus\seq{\pi}$, $\Omega$ as in \eqref{eq:straight_corner}. Let $D^Q$ and $D^L$ be defined respectively as in \eqref{eq:def-dom-DQ-R} and \eqref{eq:def-dom-DL-R} and let $u_k$ be defined  as in \Cref{def:f_k}. Then

\begin{itemize}
\item \underline{Quantum-dot}:
 \begin{enumerate}[label=(\roman*)]
\item for $0 < \omega < \pi$, $D^Q$ is self-adjoint

\item for $\pi < \omega <2 \pi$, $D^Q$ admits infinite self-adjoint extensions, and they all belong to the one-parameter family $\seq{D_\tau^Q}_{\tau\in[0,\pi)}$ with domains
\begin{equation}\label{eq:def_D(D^Q_tau)}
\D(D^Q_\tau) = \D(D^Q) + \operatorname{span} (\cos(\tau) u_0^Q +i \sin(\tau) u_{-1}^Q).
\end{equation}
\end{enumerate}

\item[]

\item \underline{Lorentz-Scalar}:
$D^L$ has infinite self-adjoint extensions, and they all belong to the one-parameter family $\seq{D_\tau^L}_{\tau\in[0,\pi)}$ with domains
\begin{equation}\label{eq:def_D(D^L_tau)}
\D(D^L_\tau) = \D(D^L) + \operatorname{span} (\cos(\tau) u_0^L +i \sin(\tau) u_{-1}^L).
\end{equation}

\end{itemize}
\end{theorem}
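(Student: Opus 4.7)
Plan for the proof of \Cref{thm:wedge-full}:

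The case $0<\omega<\pi$ for the quantum-dot is immediate: \Cref{prop:quotient-adj} gives $\D((D^Q)^*)=\D(D^Q)$, and since $D^Q$ is closed and symmetric by \Cref{prop:symmetry}, it is already self-adjoint. For the remaining cases, \Cref{prop:quotient-adj} shows that the quotient $\D(D^*)/\D(D)$ is two-dimensional, with basis (representatives) $u_0$ and $u_{-1}$. I would then invoke the von Neumann/Lagrangian theory: since $D$ is closed and symmetric, self-adjoint extensions $D_E$ with $D\subset D_E\subset D^*$ are in bijection with maximal neutral (\emph{isotropic}) subspaces of the anti-Hermitian boundary form
\begin{equation*}
\mathcal{J}(u,v) := \langle D^*u, v\rangle_{L^2}-\langle u, D^*v\rangle_{L^2}
\end{equation*}
viewed as a form on $\D(D^*)/\D(D)$. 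This is where the symmetry of $D$ (so that $\mathcal{J}$ descends to the quotient) and the decomposition in \Cref{prop:quotient-adj} are used.

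The heart of the argument is the explicit computation of the $2\times2$ matrix of $\mathcal{J}$ on the basis $\{u_0,u_{-1}\}$. The key simplification comes from the harmonic ansatz: because $-i\bvec\sigma\cdot \bvec e_r(\partial_r+ir^{-1}\sigma_3\partial_\theta)(r^{\lambda_k}f_k)=0$ pointwise on the open wedge (both factors of $\lambda_k$ cancel using $-i\sigma_3\partial_\theta f_k=\lambda_k f_k$), the function $Hu_k$ is supported on the annular region $\{r/\rho\in(1/3,2/3)\}$ where $\phi'\neq0$, and equals $-i\rho^{-1}\phi'(r/\rho)\,r^{\lambda_k}f_{-k-1}(\theta)$ by \eqref{eq:sigma.er.f_k}. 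All four entries $\mathcal{J}(u_j,u_k)$ then reduce to products of an angular integral and a radial integral. The angular integrals $\int_0^\omega \langle f_{-j-1},f_k\rangle\, d\theta$ are $0$ or $1$ by the orthonormality in \Cref{lem:basis-eigen}; the diagonal entries $\mathcal{J}(u_k,u_k)$ vanish by this orthogonality, while the off-diagonal entries survive precisely because $\lambda_0+\lambda_{-1}+1=0$ (which is the pairing relation $-\lambda_k-1=\lambda_{-k-1}$ specialized to $k=0$, valid in both models). The radial integral collapses to $\int_0^\infty \phi(s)\phi'(s)\,ds=-1/2$, and one reads off $\mathcal{J}(u_0,u_{-1})=\mathcal{J}(u_{-1},u_0)=-i$.

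Thus $i\mathcal{J}$ is represented by $\bigl(\begin{smallmatrix}0&1\\1&0\end{smallmatrix}\bigr)$, a Hermitian form of signature $(1,1)$ on $\C^2$. Its maximal isotropic subspaces are one-dimensional and, up to overall scaling, parametrised exactly by the condition $\Re(\bar{\alpha}\beta)=0$, i.e.\ $(\alpha,\beta)=(\cos\tau,i\sin\tau)$ for $\tau\in[0,\pi)$. Each such subspace yields a self-adjoint extension with domain
\begin{equation*}
\D(D_\tau)=\D(D)+\operatorname{span}(\cos(\tau)u_0+i\sin(\tau)u_{-1}),
\end{equation*}
exactly matching \eqref{eq:def_D(D^Q_tau)} and \eqref{eq:def_D(D^L_tau)}; and since these exhaust the maximal isotropic subspaces, they exhaust the self-adjoint extensions.

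The main technical obstacle I anticipate is justifying that $\mathcal{J}$ really descends to the quotient $\D(D^*)/\D(D)$, i.e.\ that mixed terms $\mathcal{J}(w,u_k)$ with $w\in\D(D)$ vanish. Formally this follows since $u_k\in\D(D^*)$ and $w\in\D(D)$, but because the boundary conditions on $\D(D^*)$ hold only in the weak sense of \Cref{prop:def-D(D^*)}, one should express $\mathcal{J}(w,u_k)$ through the distributional duality in \Cref{lem:trace-ext} (or by approximating $w$ with functions supported away from the corner and using \Cref{lem:ibp}). For the Lorentz-scalar case the same calculation goes through provided the hyperbolic identities for $M_\mu^\pm$ (already used in \Cref{prop:symmetry}) are invoked to cancel the transmission contribution of the inner arc on each side; the angular orthogonality in \Cref{lem:basis-eigen} and the pairing $-\lambda_k^L-1=\lambda_{-k-1}^L$ are precisely what make this cancellation work out.
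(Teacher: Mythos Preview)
Your proposal is correct and follows essentially the same route as the paper: reduce to the two-dimensional quotient via \Cref{prop:quotient-adj}, compute $D^*u_k=-i\rho^{-1}\phi'(r/\rho)r^{\lambda_k}f_{-k-1}$ and use the orthonormality of the $f_k$ together with $\lambda_0+\lambda_{-1}+1=0$ to evaluate the boundary form, then read off the one-parameter family from the condition $\Re(c_0\overline{c_{-1}})=0$. The only cosmetic difference is that you invoke the Lagrangian/isotropic-subspace formalism while the paper verifies symmetry and self-adjointness of each $D_\tau$ by hand; the computations are identical (up to an immaterial overall sign in your $\mathcal{J}(u_0,u_{-1})$).
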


\begin{proof}
If $0 < \omega < \pi$, then $D^Q$ is self-adjoint thanks to \Cref{prop:quotient-adj}

Since the approach is the same, now we analyse at the same time 
 the quantum-dot operator in the case that $\pi<\omega<2\pi$, and the Lorentz-scalar operator. 
 
Let $u\in \D(D^*)$. Then, by \Cref{prop:quotient-adj}, $u=\tilde u+c_0 u_0+c_{-1} u_{-1}$, with $\tilde u\in \D(D)$ and $c_0 ,c_{-1}\in \C$. 
Since $u_0, u_{-1}\in D(D^*)$ and due to the symmetry of $D$ we have that
\begin{multline*}
\inprod{D^*u}{u}_{L^2}-\inprod{u}{D^*u}_{L^2}\\
=
\inprod{c_0D^*u_0+c_{-1} D^*u_{-1}}{c_0 u_0+c_{-1} u_{-1}}_{L^2}-\inprod{c_0 u_0+c_{-1}  u_{-1}}{c_0 D^*u_0+c_{-1} D^*u_{-1}}_{L^2}.
\end{multline*}
Combining \eqref{eq:polar_dirac}, \eqref{eq:sigma.er.f_k} we have that
\begin{equation}\label{eq:D^*u}
D^*u_k(r,\theta)=-\frac{i}{\rho} \phi'(r/\rho) r^{\lambda_k} f_{-k-1}(\theta),\quad\text{for}\ k=0,1.
\end{equation}
Due to the orthonormality of  $f_0$ and $f_1$, and since $\phi(0)=1$, one has
\begin{equation}\label{eq:commut-D^*u_k,u_l}
\inprod{D^*u_k}{u_l}=
\begin{cases}
0&\text{if}\ k=l\\
i/2&\text{if}\ k\neq l
\end{cases}
\qquad
\text{for} \ k,l=0,-1.
\end{equation}
Thanks to this, we can conclude that
\begin{equation}\label{eq:Symmetry}
\inprod{D^*u}{u}_{L^2}-\inprod{u}{D^*u}_{L^2}=
2i\Re(c_0\overline{c_{-1}}).
\end{equation}
Let now $\tilde D$  be a non-trivial symmetric extension of $D$, that is $D\subsetneq \tilde D$.
From \eqref{eq:Symmetry} we have if $u\in \D(\tilde{D})$, then $\Re(c_0\overline{c_{-1}})=0$. Following for instance \cite[Lemma 3.2]{coulombbiagio}, we conclude that  there exists $\tau\in[0,\pi)$ such that
$i c_0\sin \tau+ c_{-1}\cos\tau=0$, that is equivalent to say that $c_0= c\cos\tau$ and $c_{-1}=i c\sin \tau$, for an appropriate $c\in \C$.
This means that the operator $D_\tau$ defined in \eqref{eq:def_D(D^Q_tau)} is symmetric, and that if $\tilde D$ is a non-trivial symmetric extension of $D$, then $\tilde D=D_\tau$ for a certain $\tau\in[0,\pi)$.

Let us prove that $D_\tau$ is self-adjoint.
By construction 
\[
D \subsetneq D_\tau \subset D_\tau^* \subset D^*.
\]
Let $v=\tilde{v}+c_0 u_0+c_{-1} u_{-1}\in \D(D^*_\tau)$ and take $u=(\cos\tau u_0+i\sin\tau u_{-1}) \in \D(D_\tau)$, with $\tilde{v}\in\D(D)$ and $c_0,c_{-1}\in\C$. Reasoning as before and thanks to \eqref{eq:commut-D^*u_k,u_l} we have that
\[
0=\inprod{D^*_\tau v}{u}_{L^2}-\inprod{v}{D_\tau u}_{L^2}
=i/2\left(-i \overline c_0\sin\tau+ \overline c_{-1}\cos\tau\right),
\]
that directly implies that $c_0=c \cos\tau$ and $c_{-1}=i c\sin\tau$ for an appropriate $c\in\C$. Then $v \in\D(D_\tau)$, and so $D_\tau$ is self-adjoint.
\end{proof}

For the cases where there are infinitely many self-adjoint extensions, we always have $\lambda_0 \in (-1/2, 0)$ and $\lambda_{-1} \in (-1, -1/2)$.
Therefore, $u_0$ is in $H^{1/2}$, while $u_{-1}$ is not (see for instance \cite[Theorem 1.4.5.3]{grisvard} for a proof of this).
Thus, the restriction of $D^*$ to $H^{1/2}$ coincides with $D_{\tau = 0}$, and we have proven Theorem~\ref{thm:main} for the case of corners with straight edges.
With our notation, $D_0$ is the self-adjoint extension of $D$ with the most \emph{regular} domain.
An other criterion to select an extension is invariance under charge conjugation, as proposed in \cite{le2018self}.

In the model that we consider here, the anti-unitary operator of charge conjugation is given by 
\begin{equation}\label{eq:def.ChargeConj}
C u:=\sigma_1 \overline{u}.
\end{equation}
When dealing with 4-spinors, charge conjugation is related to the particle-antiparticle interpretation of the Dirac field, see \cite[Section 1.4.6]{thaller}.
In our model, it is just a composition of time reversal (complex conjugation) and parity transformation (swapping spinor components).

The charge conjugation operator anti-commutes with the free Dirac operator in $\R^2$ and also with its perturbation by mass terms of the form $m(x) \sigma_3$, where $m$ can be any real function.
Since the quantum dot operator for $B=1$ and the Lorentz scalar delta-shell operator are limits of operators of this type, it is natural to expect that they are invariant as well.
Indeed, a short computation suffices to verify that $C \D(D) = \D(D)$.

In Proposition \ref{propr:Charge u_k}, we show that, with our choice of phase factor, $C u_k = u_k$. 
Thus,
\[
C(\D(D_\tau))=
\D(\tau)+\operatorname{span}(\cos(\tau)u_0-i\sin(\tau)u_{-1})=\D(D_{-\tau}).
\]
So we conclude that $CD_\tau=-D_{-\tau}C$. This means that $D_0$ and $D_{\pi/2}$ are the only extensions that anti-commute with charge conjugation.

\section{Curvilinear polygons} \label{sec:curvilinear}

In this section, we deduce \Cref{thm:main} from \Cref{thm:wedge-full}.
We first check that the operators with domains in $H^{1/2}$ are symmetric.
In order to simplify the notation further, we assume that $\Omega$ has a single corner centred at the origin. The case of several corners is again just a matter of extra notation.

\begin{lemma} \label{lem:extension-symmetry}
The operators $D_0^L$ and $D_0^Q$, as defined in \Cref{thm:main} are symmetric.
\end{lemma}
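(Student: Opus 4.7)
The plan is to verify $\langle Du, v\rangle_{L^2} = \langle u, Dv\rangle_{L^2}$ for all $u,v\in \D(D_0)$ by a limiting integration-by-parts argument. The starting point is the decomposition from \Cref{prop:D(D^*)},
\[
u = u_{\text{reg}} + \sum_{c\in\mathcal{C}} \phi_{\rho,c}\, w_c^E, \qquad v = v_{\text{reg}} + \sum_{c\in\mathcal{C}} \phi_{\rho,c}\, z_c^E,
\]
with $u_{\text{reg}}, v_{\text{reg}} \in \D(D)$ and $w_c, z_c \in N_\rho(c)$. Since $D$ is symmetric by \Cref{prop:symmetry}, the $u_{\text{reg}}$--$v_{\text{reg}}$ pairing contributes zero, so only cross terms and singular--singular terms remain, and these are supported in a neighborhood of $\bigcup_c \{c\}$.

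To handle the corner terms, I would exhaust the domain by $\Omega_\epsilon := \Omega \setminus \bigcup_c B(c,\epsilon)$ for small $\epsilon > 0$. Since $u,v$ are $H^1$ away from corners (by the remark following \Cref{thm:main}), the identity \eqref{eq:ibp} of \Cref{lem:ibp} applies on $\Omega_\epsilon$, producing a boundary integral over $\Sigma\cap \Omega_\epsilon$ and over the arcs $\Gamma_{c,\epsilon} := \de B(c,\epsilon)\cap \Omega$. The former vanishes by the same matrix algebra used in the proof of \Cref{prop:symmetry}, because on $\Sigma\cap\Omega_\epsilon$ the functions $u,v$ are $H^1$ and satisfy the pointwise boundary conditions. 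Bulk integrals over $\Omega_\epsilon$ converge to those over $\Omega$ as $\epsilon\to 0$, so the whole argument reduces to showing that the arc contributions vanish in that limit.

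Third, I would control each arc contribution using the regularity built into $\D(D_0)$. For cross terms involving $u_{\text{reg}}$ or $v_{\text{reg}}$ (which are $H^1$ up to the corner), a standard scaled trace estimate on $\Gamma_{c,\epsilon}$ paired with the $L^2$ bound for the other factor already gives a vanishing contribution. The genuinely delicate term is the singular--singular one at each corner. Here the decisive point is that the $H^{1/2}$ constraint in $\D(D_0)$ forces the singular parts $w_c,z_c$ to have no $u_{-1}$-component in the decomposition of \Cref{prop:quotient} (since $\lambda_{-1}\in(-1,-1/2)$, while $H^{1/2}$ requires $\lambda>-1/2$); only the $u_0$-component with exponent $\lambda_0\in(-1/2,0)$ survives. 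Consequently $|\langle \bvec\sigma\cdot\bvec n \, w_c, z_c\rangle_{\C^2}|$ is of order $r^{2\lambda_0}$, and integrating over an arc of length $O(\epsilon)$ yields a bound $O(\epsilon^{1+2\lambda_0}) \to 0$.

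The main obstacle is that the description of $N_\rho(c)$ in \Cref{prop:quotient} was obtained only for straight wedges. To apply it near a curvilinear corner, I would use a local $C^2$ diffeomorphism (or the conformal map heralded in the abstract) flattening $\Sigma$ near $c$ to two straight half-lines meeting at the opening angle. One must then check that such a change of variables transforms the Dirac operator into a lower-order perturbation of the wedge operator that preserves both the $H^{1/2}$ condition and the leading $r^{\lambda_0}$ behavior of the singular part, so that the order-of-magnitude estimates above carry over unchanged. With this transport in hand, symmetry of $D_0^Q$ and $D_0^L$ follows.
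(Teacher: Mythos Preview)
Your approach is workable in principle but is considerably heavier than the paper's, and it inverts the paper's logical order. The paper's proof uses \emph{only} the $H^{1/2}$ hypothesis, without any appeal to the decomposition \eqref{eq:D(D^*)}, to the structure of $\mathcal{N}_\rho(c)$, or to any straightening of the boundary. Concretely: one applies \eqref{eq:ibp} on $\Omega\setminus B(0,r)$ (where $u,v\in H^1$), the $\Sigma$-boundary term vanishes by the boundary conditions, and the arc term is handled by \emph{averaging} over $r\in[s,2s]$:
\[
\frac{1}{s}\int_s^{2s}\!\abs*{\int_{\Omega\cap\partial B(0,r)}\!\!\inprodC{u}{\dotprod{\sigma}{e_r}v}}\di r
\;\le\;\frac{1}{s}\int_{\Omega\cap B(0,2s)}\!\!\abs{u}\abs{v}
\;\le\;C\,\norm{u}_{L^4(\Omega\cap B(0,2s))}\norm{v}_{L^4(\Omega\cap B(0,2s))},
\]
which tends to zero as $s\to 0$ by the Sobolev embedding $H^{1/2}(\Omega)\hookrightarrow L^4(\Omega)$ in dimension two. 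No singular-function analysis is needed.

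By contrast, your route requires knowing that the $H^{1/2}$ constraint kills the $u_{-1}$-component, which is the content of \Cref{prop:quotient} for straight wedges, and then transporting this to curvilinear corners---precisely the machinery the paper builds in Sections~\ref{sec:separation}--\ref{sec:curvilinear} \emph{after} establishing the present lemma. Your ``main obstacle'' is therefore genuine, and resolving it essentially amounts to proving most of the main theorem before proving this preliminary lemma. In addition, your treatment of the cross terms (``a standard scaled trace estimate \dots paired with the $L^2$ bound'') is too vague as written: the naive scaled trace bound for the $H^1$ factor on $\Gamma_{c,\epsilon}$ carries a factor $\epsilon^{-1/2}$, which does not obviously beat the $\epsilon^{\lambda_0+1/2}$ decay of the singular factor since $\lambda_0<0$. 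This can be fixed, but the paper's averaging-plus-$L^4$ argument avoids all of these issues at once.
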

\begin{proof}
The proof is identical for the Lorentz scalar and quantum-dot case. We give it here for the latter case, since the notation is more concise. 
Fix $u, v \in \D(D_0^Q)$. By the result for smooth domains, $u$ and $v$ are in $H^1(\Omega \setminus B(0,r))$ for all $r >0$. 
We apply \eqref{eq:ibp} from Lemma~\ref{lem:ibp} to conclude that
\[
 \inprod{u}{Hv}_{L^2(\Omega \setminus B(0,r))} - \inprod{Hu}{v}_{L^2(\Omega \setminus B(0,r))}  = -i\int_{\bdry \setminus B(0,r)} \inprodC{u}{\bvec \sigma \cdot \bvec n_\Omega v} 
 -i\int_{ \Omega  \cap \partial B(0,r)} \inprodC{u}{\bvec \sigma \cdot \bvec e_r v}. 
\]
The first term vanishes because of the boundary conditions, that hold in the classical sense away from the corner.
In order to estimate the second term, we average the identity over $r \in [s, 2s]$. This gives
\[\begin{split}
 \frac{1}{s} \int_s^{2s} \abs*{\int_{ \Omega  \cap \partial B(0,r)} \inprodC{u}{\bvec \sigma \cdot \bvec e_r v}} \di r
& \le  \frac{1}{s} \int_{\Omega \cap B(0,2s) \setminus B(0,s)} \abs{u}\abs{v} \\
& \le \frac{1}{s} \norm{u}_{L^4(\Omega\cap B(0,2s))}  \norm{v}_{L^4(\Omega\cap B(0,2s))} \abs{\Omega\cap B(0,2s))}^{1/2} \\ 
& \le C \norm{u}_{L^4(\Omega\cap B(0,2s))}  \norm{v}_{L^4(\Omega\cap B(0,2s))}.
\end{split}\]
The final bound tends to zero as $s \to 0$, since $u, v  \in L^4(\Omega)$ by the Sobolev embedding $H^{1/2}(\Omega) \subset L^4(\Omega) $. 
\end{proof}

This reduces the problem of self-adjointness to the issue of showing that the domain of the adjoint operator stays in $H^{1/2}$. We know as well that the domain of the adjoint is included in the maximal domain, so away from the corners, elements in the domain of the adjoint are even $H^1$.
Close to the corners, we have to transform coordinates to straighten the boundary. In general, this transformation set up a unitary equivalence between the Dirac operator on the curvilinear wedge and the Dirac operator plus a perturbation on the straight wedge.
The unbounded part of this perturbation consists of derivatives of the first order, multiplied by a function that measures the difference between the Jacobian matrix of the coordinate transformation and the identity matrix.
In the case of smooth boundaries, this perturbation is irrelevant by the elliptic regularity of the Dirac operator on the half-space.

Here, elliptic regularity does no longer hold, so we need a to work a little bit more. 

For the quantum-dot case, we can avoid issues by using bounded conformal transformation to send the interior of the domain to a subset of the wedge. This conformal transformation maps the maximal domain to the maximal domain on the wedge, where the classification from Theorem~\ref{thm:wedge-full} remains valid. This allows for a classification of self-adjoint extensions for the quantum-dot operator as well.
For the sake of brevity, we have stated Theorem~\ref{thm:main} for the extension with domain in $H^{1/2}$, and give a single proof that applies to both the Quantum-Dot and the Lorentz-scalar model.

The case of the Lorentz scalar operator is more delicate, because it is not, in general, possible to find a conformal transformation that maps both the interior and the exterior of the curvilinear domain to the interior and exterior of the wedge.
On the other hand, it is always possible to find a $C^2$ coordinate transformation that achieves this, 
but in this case, we have to treat the perturbation terms carefully.
 We choose a coordinate transformation with the perturbation of the Jacobian matrix of order $r$, with $r$ the distance to the corner. Combined with the $H^{1/2}$ regularity in the whole domain, this gives us precisely what is needed to conclude. The perturbation terms are finite and symmetric on the image of the original domain, which allows to conclude that the image of the original domain is included in $\D (D^L_0)$ on the wedge. 
By using the decomposition of spinors in this domain in a $H^1$ part and a multiple of $u_0^L$, we conclude that the perturbation terms are relatively bounded with respect to the full operator, with a relative bound that can be made smaller by taking a smaller neighbourhood of the corner.  
Note that this strategy does not give a classification of self-adjoint extensions, it only proves the existence of a single extension with the domain in $H^{1/2}$.

We write $D_0^{L,\Omega}$ and $D_0^{L,W_\omega}$ to distinguish the operators $D^L_0$ acting on $\Omega$ and $W_\omega$ respectively.
A first technical step is to construct a coordinate transformation that maps the curved boundary inside this boundary to a straight boundary. An explicit example is given in Appendix~\ref{sec:coordinate-transf}
Having this transformation at hand, we also have to transform spinors so that the transplanted functions satisfy the boundary conditions on the new domain. 
This is achieved by means of point-wise multiplication by a matrix that, at the boundary points, equals $e^{i\sigma_3 \gamma/2}$, where $\gamma$ is the angle measuring the rotation to pass from the tangent vector to the curved boundary to the tangent vector at the boundary of the wedge.
We denote this transformation by $U$. The map $U$ can be chosen to be unitary.
Again, details of this transformation can be found in \Cref{sec:coordinate-transf}. 

The result of this rather technical construction is to set up a unitary equivalence between $D^{L, \Omega}_0$ (after restriction to a neighbourhood of the corner), and an operator in the wedge, that decomposes as
\begin{equation}\label{eq:def_perturbation}
U D^{L, \Omega}_0 U^* = H + \sum_{j=1,2} L_{j}(x) \partial_j + M (x),
\end{equation}
with $L_j$ and $M$ defined in \eqref{eq:def-L-M}.
The matrices $L_j$ depend on the difference between the Jacobian matrix of the coordinate transformation and the identity. The matrix $M$ is a multiplication operator containing first and second derivatives of the functions giving the transformation.  
By the $C^2$ regularity of the boundary, $M$ is bounded, and the the transformation can be chosen to tend linearly to the identity when approaching the origin.
A priori, the expression $H + \sum_{j} L_j(x) \partial_j $ has to be taken in distribution sense, where only the sum of both is well-defined on $U \D(D^{L,\Omega}_0)$.

What we use in the following, is that
\begin{equation}\label{eq:bound_perturbation}
 \norm{L_j(x) }_{\C^2 \to \C^2} \leq C|x|, \quad C>0.
\end{equation}
We first check that $U D^{L, \Omega}_0 U^*$, given by the differential expression \eqref{eq:def_perturbation}, is well-defined on $\D(D^{L,W_\omega}_0)$

 \begin{lemma} \label{lem:pert-bound}

 Let $\Omega $ be a corner domain and assume the origin is at a corner. Assume that $u \in \D(D_0^{L,\Omega})$ with support in $B(0,R)$, where $R>0$ is sufficiently small such that the origin is the only corner in $\supp u$.
 Then
 \[
\norm{\abs{x} \nabla u(x)}_{L^2} \le \infty. 
 \]
 \end{lemma}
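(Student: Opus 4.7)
The plan is to combine the decomposition of Theorem~\ref{prop:D(D^*)} with the explicit classification of singular harmonic spinors obtained on the straight wedge in \Cref{sec:separation}. First, I would invoke Theorem~\ref{prop:D(D^*)} with some $\rho < R$ to write $u = u_R + u_S$, where $u_R \in \D(D^L)$ and $u_S = \phi_{\rho,0} w^E$ for some $w \in N_\rho(0)$. Up to adjusting the cut-off $\phi_{\rho,0}$ so that $u_R$ is supported in $\overline{B(0,R)}$, the $H^1$ regularity of $u_R$ immediately gives $\||x|\nabla u_R\|_{L^2} \le R \|\nabla u_R\|_{L^2} < \infty$. It then remains to estimate $\||x|\nabla u_S\|_{L^2}$ near the corner.

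For the singular piece $u_S$, I would transfer the problem to the straight wedge via the unitary $U$ from \Cref{sec:coordinate-transf}. Since $u \in H^{1/2}$ and $U$ comes from a $C^2$-diffeomorphism that is a bounded pointwise multiplication, $U u_S$ belongs to $H^{1/2}$ close to the origin and, up to the perturbation $\sum_j L_j \partial_j + M$, satisfies the defining properties of the wedge analogue of $\cN_\rho(0)$. By \Cref{prop:quotient}, modulo an $H^1$ remainder, $U u_S$ is a linear combination of $u_0^L$ and $u_{-1}^L$; and since $u_{-1}^L \notin H^{1/2}$ (because $\lambda_{-1}^L \in (-1,-1/2)$; see for instance \cite[Theorem 1.4.5.3]{grisvard}), the $H^{1/2}$ assumption forces the $u_{-1}^L$ coefficient to vanish.

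This reduces $U u_S$, modulo an $H^1$ remainder, to a multiple of $u_0^L(r,\theta) = \phi(r/\rho)\, r^{\lambda_0^L} f_0^L(\theta)$ with $\lambda_0^L \in (-1/2, 0)$. A direct pointwise computation in polar coordinates gives $|\nabla u_0^L(x)| \lesssim |x|^{\lambda_0^L - 1}$ near the origin, so $|x|\,|\nabla u_0^L(x)| \lesssim |x|^{\lambda_0^L}$, which is locally square-integrable since $2\lambda_0^L > -1$. Pulling back by $U^{-1}$ preserves the estimate because the Jacobian is bounded by $C^2$-regularity.

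The hard part will be rigorously closing this transfer argument, because the individual terms $L_j \partial_j u$ are only well-defined on the domain once the conclusion of the lemma is known. To avoid circularity I would use the wedge classification purely to identify the pointwise asymptotic profile of $u_S$ near the corner, and then verify the weighted bound on that profile directly, relying only on the fact that the coordinate change and $U$ are $C^2$-regular with bounded Jacobian.
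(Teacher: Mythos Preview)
Your approach is genuinely different from the paper's, and the circularity you flag in your last paragraph is a real gap that your proposed workaround does not close. The space $N_\rho(0)$ in Theorem~\ref{prop:D(D^*)} is defined for the \emph{curvilinear} corner: its elements are harmonic on $\Omega\cap B(0,\rho)$, not on the wedge. After you apply $U$, the transformed spinor $Uu_S$ is no longer harmonic; it satisfies $(H+\sum_j L_j\partial_j+M)^2 Uu_S=0$, and the boundary conditions of $N_\rho^{W_\omega}(0)$ only up to the same perturbation. \Cref{prop:quotient} therefore does not apply to $Uu_S$, and you cannot conclude that $Uu_S$ is, modulo $H^1$, a combination of $u_0^L$ and $u_{-1}^L$. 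Turning ``up to the perturbation'' into an actual $H^1$ remainder would require controlling $L_j\partial_j(Uu_S)$ in $L^2$, i.e.\ exactly $\||x|\nabla u_S\|_{L^2}<\infty$, which is the statement you are trying to prove. Your suggested fix---using the wedge classification ``purely to identify the pointwise asymptotic profile''---does not escape this, because the asymptotic profile on the curvilinear corner is precisely what is unknown until the transfer is justified.

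The paper avoids this entirely by a direct argument that uses neither Theorem~\ref{prop:D(D^*)} nor the wedge classification. It bounds $\|\nabla(|x|u)\|_{L^2(\R^2\setminus B(0,r))}$ via the quadratic-form identities \eqref{eq:q-form-general} and \eqref{eq:q-form-D-L} from Lemma~\ref{lem:ibp}, applied on $\R^2\setminus B(0,r)$ where $u$ is $H^1$. The $\Sigma$-boundary term is controlled by the $L^2$ trace coming from $u\in H^{1/2}$, and the $\partial B(0,r)$-boundary term is handled by averaging in $r$ and bounding $\int_{B(0,t)}|u||\partial_\theta u|$ by $\|u\|_{H^{1/2}}^2$. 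The resulting bound is uniform in $r$, so the monotone limit $r\to 0$ is finite. This is self-contained and is in fact what makes the subsequent use of the wedge classification (via Lemma~\ref{lem:pert-rel-bound}) legitimate.
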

 \begin{proof}
 First, we note that
 \[
 \norm{\abs{x} \nabla u(x)}_{L^2} \le  \norm{ \nabla\abs{x} u(x)}_{L^2} + \norm{u}_{L^2}^2, 
 \]
 where the second term is clearly finite.
 Since $u$ is $H^1$ away from the origin, we may use \eqref{eq:q-form-general} and \eqref{eq:q-form-D-L} from Lemma~\ref{lem:ibp} to write, for any $r>0$,
 \[\begin{split}
\int_{\R^2\setminus B(0,r)} &\abs{\nabla \abs{x} u(x)}^2 \di x \\
&= \int_{\R^2\setminus B(0,r)} \abs{D^L (\abs{x} u(x))}^2 \di x - \int_{\bdry \setminus B(0,r)} \kappa \abs{s}^2 \inprodC{u_+}{\sigma_3 u_+}(s)\di s \\
&+ \int_{\partial B(0,r) \cap \Omega}  r^2 \inprodC{u_+}{i \sigma_3\partial_{\theta} u_+}(r,\theta)  \di \theta \\
 &+ \int_{\partial B(0,r) \setminus \Omega}  r^2 \inprodC{u_-}{i \sigma_3\partial_{\theta} u_-}(r,\theta)\di \theta .
 \end{split}\]
Since $D^L (\abs{x} u(x)) = \abs{x} D^L u + \abs{x}^{-1}\dotprod{\sigma}{ x } u$, the first term is bounded independently of $r$.
The second term is bounded as well, by using the representation of the boundary traces given in the proof of Lemma~\ref{lem:trace-ext}. Indeed, from \eqref{eq:trace-ext-def}, $T$ is bounded from $H^s(\Omega)$ to $H^{s-1/2}(\Omega)$ for all $s\le 1$, and thus, $u_+ \in H^{1/2}(\Omega,\C^2)$ has boundary traces in $L^2 (\bdry,\C^2)$. 

In order to estimate the contribution from the boundary of $B(0,r)$, we average over $r \in (0,t)$ and write
\[\begin{split}
\frac{1}{t}\int_0^t \int_{\partial B(0,r) \cap \Omega}  r^2 \abs{u_+} \abs{\partial_\theta u_+} (r,\theta) \di \theta \di r
&\le \int_{B(0,t)\cap \Omega}\abs{u_+} \abs{\partial_\theta u_+} (r,\theta) r \di \theta \di r \\
&\le \norm{u_+}_{H^{1/2}}^2.
\end{split}\] 
The same argument works for $u_-$.
Putting everything together, we have shown that, for all $t >0$,
\[
\frac{1}{t} \int_0^t\norm{ \nabla\abs{x} u(x)}_{L^2(\R^2 \setminus B(0,r)}^2 \di r \le 
C(\norm{D^L_0 \abs{x} u(x)}_{L^2} + \norm{u}_{H^{1/2}}).
\]
Since $\norm{ \nabla\abs{x} u(x)}_{L^2(\R^2 \setminus B(0,r)}^2$ increases as $r$ decreases, this shows that the limit at zero is finite. 
 \end{proof}

The previous lemma shows that $U$ maps $\D(D^{L, \Omega}_0)$ unitarily into $\D(D^{L, W_{\omega}}_0)$. 
We can now use Theorem~\ref{thm:wedge-full} to conclude that $u \in \D(D^{L, \Omega}_0)$ decomposes as
\[
u = v + c_0 U^{-1} u_0^L
\]
for some $v \in H^1$ and $c_0 \in \C$. 
This decomposition also allows to show that the second term in \eqref{eq:def_perturbation} is relatively bounded with respect to the first one.

\begin{lemma} \label{lem:pert-rel-bound}
Let $u \in \D(D^{L, W_{\omega}}_0)$.
Then we have
\[
\norm*{ \sum_{j} L_j(x) \partial_j u(x)}_{L^2} \le C\left(\sup_{x \in \supp u } |x|\right)  \norm{D^{L, W_{\omega}}_0 u}_{L^2} +2C   \,\norm{u}_{L^2},\quad\text{with}\ C>0.
\]
\end{lemma}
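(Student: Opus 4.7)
The plan is to combine the pointwise bound $\|L_j(x)\|_{\C^2 \to \C^2} \le C|x|$ with a weighted Hardy-type identity for the Dirac operator on the wedge, exploiting the key geometric simplifications that arise because $W_\omega$ has straight edges meeting the corner at the origin. Concretely, from the operator bound on $L_j$ we immediately get
\[
\Bigl\lVert \sum_{j=1,2} L_j(x)\,\partial_j u \Bigr\rVert_{L^2} \le C\,\bigl\lVert |x|\,\nabla u \bigr\rVert_{L^2(\supp u)},
\]
so the entire statement reduces to proving
\[
\bigl\lVert |x|\,\nabla u\bigr\rVert_{L^2}\;\le\;\sqrt{2}\,\Bigl(\sup_{x\in\supp u}|x|\Bigr)\,\lVert D_0^{L,W_\omega} u\rVert_{L^2} + 2\,\lVert u\rVert_{L^2}.
\]

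To obtain this, I would follow the integration-by-parts strategy already used in the proof of \Cref{lem:pert-bound}, but apply it to $|x|u$ and be careful about constants. Since $u\in\D(D_0^{L,W_\omega})$ is $H^1$ away from the origin by elliptic regularity on smooth portions of the boundary, I would apply \eqref{eq:q-form-general} and \eqref{eq:q-form-D-L} from \Cref{lem:ibp} on $\R^2\setminus B(0,r)$ to the function $|x|u$, and then take $r\to 0$. The expansion
\[
|\nabla(|x|u)|^2 = |x|^2|\nabla u|^2 + 2|x|\,\Re\langle (\nabla|x|)u,\nabla u\rangle + |u|^2
\]
combined with the identity $x\cdot\Re\langle u,\nabla u\rangle = \tfrac12 x\cdot\nabla|u|^2$ and an integration by parts in $\R^2$ (with $\mathrm{div}(x)=2$) yields
\[
\int |x|^2|\nabla u|^2 = \int|\nabla(|x|u)|^2 + \int|u|^2 - \int_{\partial W_\omega}\tfrac{x\cdot n}{2}|u|^2\,ds + \text{(corner arc term)}.
\]
On $W_\omega$ the edges are straight with $x\cdot n\equiv 0$, so the boundary integral vanishes identically; the curvature term in \eqref{eq:q-form-D-L} also vanishes since $\kappa\equiv 0$ on the edges; and the circular arc contribution at $r\to 0$ is controlled exactly as in \Cref{lem:pert-bound}, using $u\in H^{1/2}$ to make the averaged estimate tend to zero.

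With all geometric boundary contributions gone, the identity collapses to $\lVert|x|\nabla u\rVert_{L^2}^2 = \lVert D^L(|x|u)\rVert_{L^2}^2 + \lVert u\rVert_{L^2}^2$. A direct computation using $D^L=-i\bvec\sigma\cdot\nabla$ gives
\[
D^L(|x|u) = |x|\,D^L u - i\,(\bvec\sigma\cdot\hat x)\,u,
\]
and since $\bvec\sigma\cdot\hat x$ is unitary we conclude
\[
\lVert D^L(|x|u)\rVert_{L^2} \le \Bigl(\sup_{x\in\supp u}|x|\Bigr)\lVert D^L u\rVert_{L^2} + \lVert u\rVert_{L^2}.
\]
Putting everything together and using $(a+b)^2\le 2a^2+2b^2$ produces the claimed inequality.

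The main delicate point is justifying the integration by parts, because $u$ is only $H^{1/2}$ globally. I would handle this exactly as in \Cref{lem:pert-bound}: carry out the computation on $\R^2\setminus B(0,r)$ (where $u$ is $H^1$), control the $\partial B(0,r)$ contribution by the averaging trick and the $H^{1/2}$ Sobolev embedding, and then pass to the limit $r\to 0$. The crucial structural input — and what makes the estimate much cleaner than in \Cref{lem:pert-bound} — is the wedge geometry, which forces both $\kappa$ and $x\cdot n$ to vanish on $\partial W_\omega$, so no uncontrolled $H^{1/2}$ trace terms remain in the final bound.
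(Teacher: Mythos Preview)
Your approach is correct but takes a genuinely different route from the paper's. The paper uses the wedge decomposition from \Cref{prop:quotient}: any $u \in \D(D_0^{L,W_\omega})$ splits as $u = v + c_0\, u_0^L$ with $v \in H^1$, and since $\lambda_0^L > -1/2$ one checks directly that $|x|\, u_0^L \in H^1$. This gives $|x|\, u \in \D(D^L)$ outright, so \eqref{eq:q-form-D-L} (with $\kappa = 0$ on the straight rays) applies \emph{without any limiting procedure}, yielding $\lVert\nabla(|x|u)\rVert = \lVert D^L(|x|u)\rVert$; the triangle inequality $\lVert |x|\nabla u\rVert \le \lVert\nabla(|x|u)\rVert + \lVert u\rVert$ then finishes. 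Your route, by contrast, bypasses the decomposition entirely and reruns the $\R^2 \setminus B(0,r)$ argument of \Cref{lem:pert-bound}, exploiting the wedge geometry ($\kappa \equiv 0$ and $x\cdot \bvec n \equiv 0$ on the rays) to kill all boundary terms except the arc contributions, and even obtains the sharper identity $\lVert |x|\nabla u\rVert^2 = \lVert D^L(|x|u)\rVert^2 + \lVert u\rVert^2$. The trade-off: the paper's argument is shorter given the machinery of Section~\ref{sec:separation}, while yours is more self-contained.

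One small point deserves tightening. You assert that the arc contribution at $r\to 0$ ``tends to zero'' via the averaging trick of \Cref{lem:pert-bound} and $u \in H^{1/2}$, but that argument as written in \Cref{lem:pert-bound} only shows the averaged arc term is \emph{bounded} by $\int_{B(0,t)} |u|\,|\partial_\theta u|\,\di x$. To conclude vanishing you need this integral to go to zero with $t$, i.e., you need $|u|\,|\partial_\theta u| \in L^1$ near the corner. This follows from Cauchy--Schwarz once you know $|x|\nabla u \in L^2$ (since $|\partial_\theta u| \le |x|\,|\nabla u|$), which is exactly the conclusion of \Cref{lem:pert-bound} itself, applied on $W_\omega$. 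So the argument is not circular --- first invoke \Cref{lem:pert-bound} for the qualitative finiteness, then rerun the identity to extract the quantitative bound --- but this two-step structure should be made explicit rather than folded into ``using $u\in H^{1/2}$''.
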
 
\begin{proof} 
By \Cref{prop:quotient}, any $u \in \D(D^{L, W_{\omega}}_0)$ has a decomposition $u= v + c_1 u_0^L$ with $v \in H^1$. The key point is that the entries of $\abs{x} u_0^L(x)$ behaves as $r^{\lambda_0^L +1/2} e^{ \pm i (\lambda^L_0-1/2) \theta}$ and therefore, $|x|u(x)$ is in $H^1$. In addition, $u$ satisfies boundary conditions, so $ |x| u \in \D(D^L)$.
Now, we use \eqref{eq:bound_perturbation} and \eqref{eq:q-form-D-L} with $\kappa = 0$ to bound
\[\begin{split}
\norm*{ \sum_{j= 1,2} L_{j} \partial_j u}_{L^2} 
\le  C \norm{|x|\nabla u}_{L^2} 
& \le C\norm{\nabla (|x| u)}_{L^2} + C \norm{u}_{L^2}       \\
&\le  C\norm{D^L |x| u}_{L^2} +  C \norm{u}_{L^2}      \\
& \le C \norm{|x| D^L_0 u}_{L^2} +  2C \norm{u}_{L^2}.   \qedhere
\end{split}\]
\end{proof}

\begin{proof}[Proof of Theorem ~\ref{thm:main}]
We start by localizing the Dirac operator with an IMS-type formula.
Fix a cutoff $\phi$ as in $\eqref{eq:def.cutoff}$. For $\rho >0$ small enough, we write $\phi_\rho(x) = \phi(x)/\rho$. 
Then 
\begin{equation}\label{eq:magic-H}
H=\phi_\rho H\phi_\rho +\sqrt{1-\phi_\rho^2}H\sqrt{1-\phi^2}.
\end{equation}
After replacing $H$ by $D_0^{L,\Omega}$ in \eqref{eq:magic-H}, the second addend describes a self-adjoint operator, since the corner does not belong to the support of $1-\phi_\rho^2$.

We now focus on the first addend. Since we are considering only functions that are localized close to the corner, we can assume that $\Omega=W_\omega$ outside a sufficiently large neighbourhood of the origin.
Let $U$ be the unitary transformation defined in \Cref{sec:coordinate-transf};
by \eqref{eq:def_perturbation},
\eqref{eq:bound_perturbation} and  Lemma~\ref{lem:pert-bound} we have
$U \D(D_0^{L, \Omega}) = \D(D_0^{L, W_{\omega}})$. 
In particular, from \eqref{eq:bound_perturbation} we have that
\begin{equation}\label{eq:almost-done}
U \phi_\rho D_0^{L, \Omega}\phi U^* =
\widetilde\phi_\rho D_0^{L, W_{\omega}}\widetilde\phi + \widetilde\phi_\rho \sum_{j=1,2} L_j \partial_l \widetilde\phi_\rho+ \widetilde\phi_\rho M\widetilde\phi_\rho,
\end{equation}
with $\widetilde\phi_\rho=\phi_\rho\circ S^{-1}$ and $S$ the coordinate transformation defined in \Cref{sec:coordinate-transf}.
The last two terms of the right-hand-side of \eqref{eq:almost-done} are symmetric on $\D(D_0^{L, W_{\omega_j}})$  since both $D_0^{L, \Omega}$ and $D_0^{L, W_{\omega}}$ are symmetric.
In addition, 
\[
\begin{split}
||\widetilde \phi_\rho L_j\partial_j\widetilde\phi_\rho u||_{L^2}
&\leq
||L_j\partial_j \widetilde \phi_\rho^2 u||+ C||\nabla\phi_\rho||_{L^\infty}||u||_{L^2}\\
&\leq C \left(\sup_{x\in\supp\phi} |x|\right)
||\widetilde\phi D^{L,W_\omega}_0 \widetilde\phi_\rho u||_{L^2}+2C||\nabla\phi_\rho||_{L^\infty}||u||_{L^2},
\end{split}
\]
where in the last line, we used \Cref{lem:pert-rel-bound}. So we have that $
\sum_{j} \widetilde \phi_\rho L_j\partial_j\widetilde\phi_\rho+\widetilde\phi_\rho M\widetilde\phi_\rho$ is relatively bounded with respect to $\widetilde\phi_\rho D^{L,W_\omega}_0 \widetilde\phi_\rho$. Choosing $\rho$ sufficiently small, the relative bound can be made smaller than $1$ and so, 
by the Kato-Rellich theorem, see \cite[Theorem 4.3]{kato2013perturbation}                  
for instance,
we conclude that $ \phi_\rho D_0^{L, \Omega} \phi_\rho$ is unitarily equivalent to a self-adjoint operator .
\end{proof}

\appendix

\section{Some technical identities}\label{sec:appendix}
In this Appendix, we prove some technical results from Section~\ref{sec:general}. 
We start with \Cref{lem:ibp}:
\begin{proof}[Proof of Lemma~\ref{lem:ibp}]
The identity \eqref{eq:ibp} follows from the divergence theorem.
Let us prove \eqref{eq:q-form-general}. For $u,v \in H^2(\mathcal{O}, \C^2)$, identity \eqref{eq:q-form-general} follows from writing
\begin{align*}
\inprodC{Hu}{Hv} &= \sum_{j,k} \inprodC{\sigma_j \partial_j u}{ \sigma_k\partial_k v} \\
& =  
\sum_{j} \inprodC{\sigma_j \partial_j u}{ \sigma_j\partial_j v}+ 
\sum_{j\neq k} \inprodC{\sigma_j\partial_j u}{\sigma_k\partial_k v}\\
&=:I+II.
\end{align*}
Then $I=\inprod{\nabla u}{\nabla v}_{L^2(\mathcal{O})}$, because the matrices $\sigma_j$ are symmetric and $\sigma_j^2=\id$.
For the second term, since $\sigma_1\sigma_2=-\sigma_2\sigma_1=i\sigma_3$, by the divergence theorem we have that
\begin{align*}
II&=
\sum_{j\neq k} \int_\mathcal{O} \inprodC{\partial_j u}{\sigma_j \sigma_k\partial_k v}
= 
\int_\mathcal{O} \inprodC{\partial_1 u}{i\sigma_3 \partial_2 v}-
\int_\mathcal{O} \inprodC{\partial_2 u}{i\sigma_3 \partial_1 v}\\
&=
\int_{\partial \mathcal{O}}
\inprodC{u}{i \sigma_3 (\bvec {n_1} \partial_2-\bvec {n_2}\partial_1)  v}.
\end{align*}
Finally, in dimension two:
\[
\left( \bvec{n}_1 \partial_2 - \bvec{n}_2 \partial_1 \right) = \bvec t \cdot \nabla,
\]
that gives the required identity for $u,v \in H^2(\mathcal{O}, \C^2)$.
By density, it extends to $u, v \in H^1(\mathcal{O}, \C^2)$, upon interpreting the boundary term as the pairing between $u \in H^{1/2}(\partial \mathcal{O}, \C^2)$ 
and $\sigma_3 \partial_{\bvec t} v \in H^{-1/2}(\partial \mathcal{O}, \C^2)$.

Identity \eqref{eq:q-form-D-Q}
 is just \eqref{eq:q-form-general} restricted to functions satisfying boundary conditions, which allows to rewrithe the boundary term in a convenient form. Recall that $\kappa$ is defined as a piecewise continuous function on $\bdry$.
We firstly treat the case $B= 1$ (infinite mass).
For functions $f, g \in C^1(\bdry, \C^2)$ that satisfy infinite mass boundary conditions
\[
f  = \dotprod{\sigma}{t} f, \quad g  = \dotprod{\sigma}{t} g, 
\]
we have, point-wise away from the corners,
\begin{align*}
\inprodC{f}{i\sigma_3\partial_{\bvec t}  g}
& = \frac{1}{2} \inprodC{\dotprod{\sigma}{t}f}{i\sigma_3\partial_{\bvec t} g} +\frac{1}{2} \inprodC{f}{i\sigma_3\partial_{\bvec t} (\dotprod{\sigma}{t} g)} \\
&= \frac{1}{2} \inprodC{\dotprod{\sigma}{t} f}{i \partial_{\bvec t}  \sigma_3g} +\frac{1}{2} \inprodC{ f}{i \sigma_3\dotprod{\sigma}{t}\partial_{\bvec t} g} - \frac{\kappa}{2} \inprodC{f}{i \sigma_3\dotprod{\sigma}{n}g} \\
&= \frac{1}{2} \inprodC{f}{i \{ \dotprod{\sigma}{t} ,  \sigma_3 \} g} + \frac{\kappa}{2} \inprodC{f}{\dotprod{\sigma}{t}g}\\
&= \frac{\kappa}{2} \inprodC{f}{g}.
\end{align*}
Here, we have used $\partial_{\bvec t}  \bvec t_j = - \kappa \bvec n_j$ 
 to obtain the second line, the anti-commutation relations of the Pauli matrices and finally again the boundary conditions.
For any set $\mathcal O $ that does not contain corners, we find
\[
\inprod{ f}{i\sigma_3\partial_{\bvec t}  g}_{L^2(\bdry \cap \mathcal O)} = \frac{\kappa}{2} \inprod{f}{g}_{L^2(\bdry \cap \mathcal O)}.
\]
By density, this identity extends to all $f,g \in H^{1/2}(\bdry \cap \mathcal O)$ that satisfy boundary conditions, in particular, one can take $f = \gamma  u$ and $g = \gamma  v$, with $u,v\in\D(D^Q)$.
By dominated convergence, one can increase the set $\mathcal O$ to obtain the integral over all of $\bdry$ on both sides of the equality.
For \eqref{eq:q-form-D-L}, we sum \eqref{eq:q-form-D-L} with $\mathcal{O}= \Omega^+$ and $\mathcal{O}= \Omega^-$. Taking into account a change in sign of the tangent vector,
\begin{align*}
\inprod{D^L u }{D^L v}_{L^2(\Omega^+)\times L^2(\Omega^-)} =  \inprod{\nabla u }{\nabla v}_{L^2(\Omega^+)\times L^2(\Omega^-)} 
-i \int_{\bdry} \left( \inprodC{ u_+}{\sigma_3 \partial_{\bvec{t}} v_+}
 - \inprodC{ u_-}{\sigma_3 \partial_{\bvec{t}} v_-} 
\right)
\end{align*} 
Again, for functions
$f_+, g_+ \in C^1(\bdry)$, we define
\[
f_- = (\cosh(\alpha) + \sinh(\alpha) \dotprod{\sigma}{t}) f_+, \quad g_- = (\cosh(\alpha) + \sinh(\alpha) \dotprod{\sigma}{t}) g_+
\]
and obtain the desired form of the boundary term away from corners:
\begin{align*}
&-i \inprodC{ f_+}{\sigma_3 \partial_{\bvec{t}} g_+}
+i \inprodC{f_-}{\sigma_3 \partial_{\bvec{t}} g_-} \\
&=   -i \inprodC{f_+}{\sigma_3 \partial_{\bvec{t}} g_+} +i \inprodC{ (\cosh(\alpha) + \sinh(\alpha) \dotprod{\sigma}{t}) f_+}{\sigma_3 ((\cosh(\alpha) + \sinh(\alpha) \dotprod{\sigma}{t}))\partial_{\bvec{t}} g_+}  \\
& \qquad +i\sinh(\alpha) \inprodC{f_-}{\sigma_3 \dotprod{\sigma}{}\left(\partial_{\bvec{t}} \bvec{t} \right) g_+} \\
&= \kappa \sinh(\alpha) \inprodC{f_-}{ \dotprod{\sigma}{t} g_+}.
\end{align*}
As for the quantum-dot case, the result for all $\gamma u_\pm$ and $ \gamma v_\pm$ that satisfy boundary conditions, follows from here by density and dominated convergence. 
\end{proof}

We can now prove \Cref{lem:trace-ext}.

 \begin{proof}[Proof of \Cref{lem:trace-ext}]

We start noticing that, by the triangle inequality $H^1(\mathcal{O},\C^2)\subset\K(\mathcal{O})$ and there exists $C>0$ such that
\[
||u||_{\K(\mathcal{O})}\leq C||u||_{H^1(\mathcal{O})},\quad\text{for}\ u\in H^1(\mathcal{O},\C^2).
\]
Moreover, $H^1(\mathcal{O},\C^2)$ is dense in $\cK(\mathcal{O})$ with respect to the norm $||\cdot||_{\K(\mathcal{O})}$, see \cite[Proposition 2.12]{ourmieres2018strategy}).

  Fix a bounded extension operator (see \cite[Section 5]{adams2003sobolev})
  \[
  \xi : H^{1/2}(\partial \mathcal{O}, \C^2) \mapsto H^1(\mathcal{O}, \C^2).
  \]
  For $v \in \cK(\mathcal{O})$, we define $T_\xi v \in H^{-1/2}(\partial \mathcal{O}, \C^2)$ by 
  \begin{equation} \label{eq:trace-ext-def}
    T_\xi v [f] := i \inprod{v}{H\xi( f)}_{L^2(\mathcal{O})} -i \inprod{Hv}{\xi(f)}_{L^2(\mathcal{O})},\quad\text{for}\ f\in H^{1/2}(\partial\mathcal{O}, \C^2).
  \end{equation}
  Let us prove that $T_\xi$ has the necessary properties. Indeed
  \[
  \begin{split}
  |T_\xi v [f] &|\leq ||v||_{L^2(\mathcal{O})}||\xi(f)||_{\K(\mathcal{O})}+||Hv||_{L^2(\mathcal{O})}||\xi (f)||_{L^2(\mathcal{O})} \leq
C ||\xi (f)||_{H^1(\mathcal{O})}  ||v||_{\K(\mathcal{O})}\\
&\leq C_\xi||f||_{H^{1/2}(\de \mathcal{O})}  ||v||_{\K(\mathcal{O})},
  \end{split}
  \]
  for some $\eta_\xi >0$, and so $T_\xi $ is bounded from $\cK(\mathcal{O})$ to $H^{-1/2}(\partial \mathcal{O}, \C^2)$. 
  Moreover, by \eqref{eq:ibp}, it coincides with $\dotprod{\sigma}{n} \gamma$ on $H^{1}(\mathcal{O},\C^2)$.
  Since $H^1(\mathcal{O},\C^2)$ is dense in $\cK(\mathcal{O})$ (see \cite[Proposition 2.12]{ourmieres2018strategy}), 
   $\eta_E$ is independent of the choice of $E$ and it be denoted by $T$ to stress this. 
  
%
%
%
\end{proof}

\section{Properties of the angular operator}
\label{sec:appendix-angular}
In this appendix we prove some technical results about the \emph{angular operator}.

\begin{proof}[Proof of \Cref{lem:basis-eigen}]
Let us consider the quantum-dot case. 
The operator $D_{ang}^Q$ in symmetric on $\H^Q$. 
To prove the self-adjointness, we start noticing that by construction $\D((D^Q_{ang})^*)\subset H^1([0,\omega],\C^2)$. 
So, let $g\in H^1([0,\omega],\C^2)$, then integrating by parts we have that for any $f \in \D(D^Q_{ang})$
\[
\inprod{-i\sigma_3\partial_\theta f }{g}_{L^2}
-
\inprod{f}{-i\sigma_3\partial_\theta g}_{L^2}=
\left\langle -i\sigma_3 f(\omega), 
g(\omega)\right\rangle_{\C}
-
\left\langle-i \sigma_3 f(0), 
g(0)\right\rangle_{\C}.
\]
Since $f$ verifies the boundary conditions $f_2(0) =  f_1(0)$ and $f_2(\omega) = -  e^{i \omega} f_1(\omega)$, we have that
$g\in \D((D^Q_{ang})^*)$ if and only if 
\[
\inprod{f_1(\omega)}{g_1(\omega)+ e^{-i\omega}g_2(\omega)}_{\C}-
\inprodC{f_1(0)}{g_1(0)-g_2(0)}=0.
\]
Choosing firstly $f$ such that $f_1(\omega)=0$ and $f_1(0)\neq 0$, and secondly  $f$ such that $f_1(\omega)\neq 0$ and $f_1(0)= 0$, we deduce that $g$ has to verify the boundary conditions and so $g\in \D(D^Q_{ang})$.
The same proof can be adapted to prove the self-adjointness of $D^L_{ang}$.

Let us now find the eigenvalues of $D_{ang}$.
The generic solution of  $-i \sigma_3 \partial_\theta f=\lambda f$ is
\[
f(\theta)=
\begin{pmatrix}
c_1 e^{i\lambda\theta}\\
c_2 e^{-i\lambda\theta}
\end{pmatrix}.
\]
Let us impose the boundary conditions.
For the quantum-dot case, in order to satisfy the boundary conditions at $\theta= 0$, the eigenfunctions must take the form 
\[
f(\theta) = c_1 \begin{pmatrix} e^{i \lambda\theta} \\  e^{-i\lambda \theta}\end{pmatrix}.
\]
The boundary condition at $\theta = \omega$ implies that $\lambda =  \lambda_k = (2k+1) \frac{\pi}{2 \omega} -1/2 $, while  $c_1$ is determined by the normalization constant . 
To conclude, it is enough to observe that the operator $D^Q_{ang}$ is self-adjoint and it has compact resolvent, since $\D(D^Q_{ang})$ is compactly embedded in $\H^Q$. Thanks to this, and by the spectral theorem we can deduce that $\seq{f_k^Q}_{k\in\Z}$ is a basis of $\H^Q$.
Finally,
\[
\dotprod{\sigma}{e_r}f_k^Q(\theta)=
\frac{1}{\sqrt{2\omega}}
\begin{pmatrix}
0&e^{-i\theta}\\
e^{i\theta}&0
\end{pmatrix} \cdot
\begin{pmatrix}
 e^{i \lambda_k^Q \theta} \\  e^{-i \lambda_k^Q \theta}
\end{pmatrix}
=
\frac{1}{\sqrt{2\omega}}
\begin{pmatrix}
 e^{i \lambda_{-k-1}^Q \theta} \\  e^{-i \lambda_{-k-1}^Q \theta}
\end{pmatrix}
\]
where in the last equality we used that $-\lambda_k^Q-1=\lambda_{-k-1}^Q$.

Let us consider the Lorentz-scalar case.
Reasoning as before, in order to be an eigenfunction the the pair of spinors $(f_+ (\theta), f_-(\theta))$ has to be of the form
\[
f_{\pm} (\theta)= \begin{pmatrix}
a_{\pm} e^{i \lambda \theta} \\
b_{\pm} e^{-i \lambda \theta}
\end{pmatrix}.
\]
Let us impose the boundary conditions:
\begin{align}
\label{eq:bc0-2pi}
\begin{pmatrix}
e^{i2\pi\lambda}a_-\\
e^{-i2\pi\lambda}b_-
\end{pmatrix}&=
\begin{pmatrix}
\cosh(\alpha) & - \sinh(\alpha) \\ -\sinh(\alpha) & \cosh (\alpha)
\end{pmatrix} 
\cdot\begin{pmatrix}
a_+\\
b_+
\end{pmatrix},
\\
\label{eq:bc-omega}
\begin{pmatrix}
e^{i\omega\lambda}a_-\\
e^{-i\omega\lambda}b_-
\end{pmatrix}&=
\begin{pmatrix}
  \cosh(\alpha) & e^{-i \omega} \sinh(\alpha) \\ e^{i \omega} \sinh(\alpha) & \cosh (\alpha)
\end{pmatrix} 
\begin{pmatrix}
e^{i\omega\lambda}a_+\\
e^{-i\omega\lambda}b_+
\end{pmatrix}.
\end{align}
Combining \eqref{eq:bc0-2pi} and \eqref{eq:bc-omega},  after few trigonometric identities, we have that 
\begin{footnotesize}
\begin{equation}\label{eq:system-a+b+}
\begin{pmatrix}
2i\cosh (\alpha)e^{-i \lambda(\pi-\omega)}\cos(\pi(\lambda+1/2)) 
&
-2\sinh(\alpha)e^{-i(\omega/2+\pi\lambda)} \sin\left((\pi-\omega)(\lambda+1/2)\right)
\\
-2\sinh(\alpha)e^{i(\omega/2+\pi\lambda)} \sin\left((\pi-\omega)(\lambda+1/2)\right)
&
-2i\cosh (\alpha)e^{i \lambda(\pi-\omega)}\cos(\pi(\lambda+1/2)) 
\end{pmatrix}\cdot
\begin{pmatrix}
a_+\\
b_+
\end{pmatrix} =0.
\end{equation}
\end{footnotesize}
To get a non-trivial solution, the determinant of the matrix has to be zero, that is
\[
\cosh^2(\alpha) \cos^2(\pi(\lambda+1/2))-\sinh^2(\alpha) \sin^2\left((\pi-\omega)(\lambda+1/2)\right)=0.
\] 
We want to find the solutions of the following equations
\begin{align}
\label{eq:trascendental-lambda_k+}
&\cos(\pi (\lambda + 1/2))=\left|\tanh(\alpha)\right|\sin(|\pi -\omega| (\lambda+1/2)),\\
\label{eq:trascendental-lambda_k-}
&\cos(\pi (\lambda + 1/2))=-\left|\tanh(\alpha)\right|\sin(|\pi -\omega| (\lambda+1/2)).
\end{align}
By using standards tools, it is easy to see that both equations admit a countable family of solutions.
Let $\seq{\lambda_{2n}^L}_{n\in\Z}$ be the family of solutions of \eqref{eq:trascendental-lambda_k+} such that $\lambda^L_0$ is the unique solution in $(-1/2,0)$. Moreover let $\seq{\lambda_{2n+1}^L}_{n\in\Z}$ be the family of the solutions of \eqref{eq:trascendental-lambda_k-}
such that $\lambda_{-1}^L$ is the unique solution in $(-1,-1/2)$.
By construction, for any $k\in \Z$, $\lambda_k^L=-\lambda^L_{-k-1}-1$.
With this notation, $\lambda_0^L, \lambda^L_{-1}$ are the unique solutions to \eqref{eq:trascendental-lambda_k+} and \eqref{eq:trascendental-lambda_k-} that are in $(-1,0)$.
\begin{figure}[h!]
\begin{center}
\includegraphics[scale=0.17]{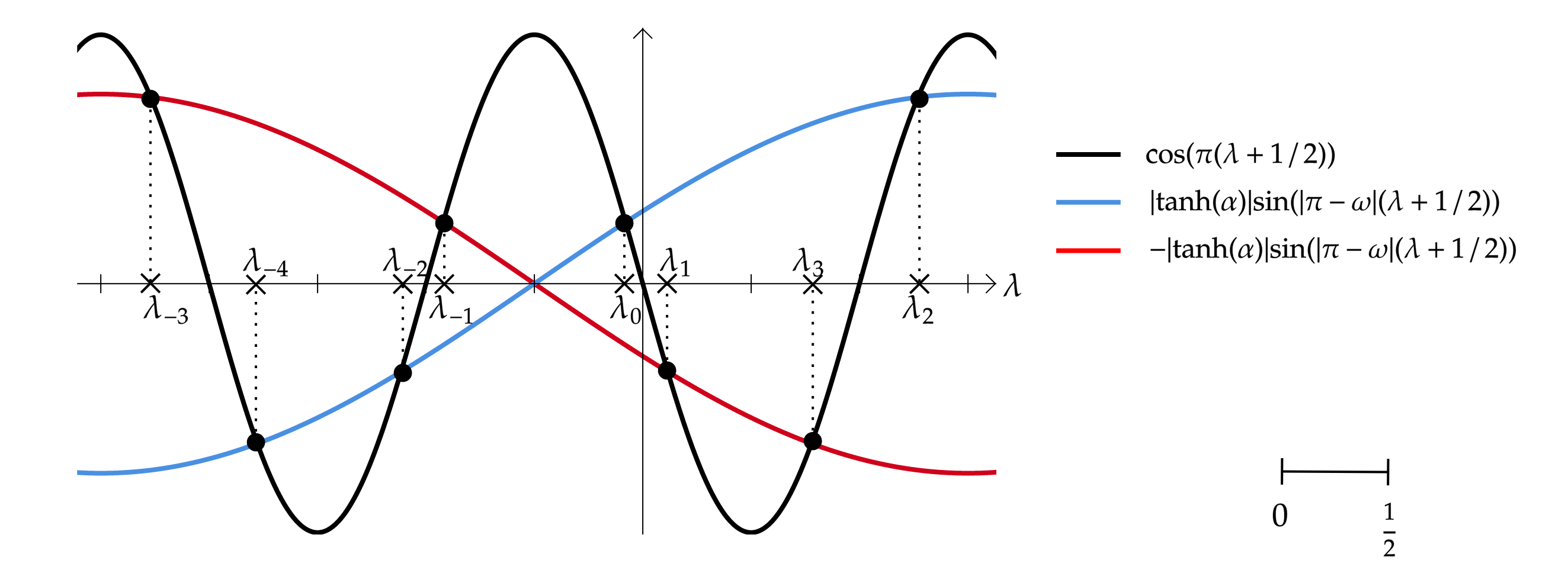} 
\caption{The solutions of \eqref{eq:trascendental-lambda_k+} and \eqref{eq:trascendental-lambda_k-} in $(-5/2,3/2)$, with $\alpha=1$ and $\omega=\pi/4$.}
\end{center}
\end{figure}

Assuming that \eqref{eq:trascendental-lambda_k+} and \eqref{eq:trascendental-lambda_k-} hold true, setting $\eta_k=\sign\left(\alpha\sin\left((\pi-\omega)(\lambda_k^L+1/2)\right)\right)$, we have that \eqref{eq:system-a+b+} is equivalent to
\begin{equation}
\begin{pmatrix}
ie^{i \lambda^L_k\omega}
&
-\eta_k e^{-i\omega/2}
\\
-\eta_k e^{i\omega/2}
&
-ie^{-i \lambda^L_k\omega}
\end{pmatrix}
\cdot
\begin{pmatrix}
a_+\\ b_+
\end{pmatrix}=0,
\end{equation}
whose solutions are
\begin{equation}\label{eq:def.a,b_+}
a_{k,+}:=  c_k \eta_k e^{-i\omega/2(\lambda_k^L+1/2)} ,\quad
b_{k,+}:=i c_k e^{i\omega/2(\lambda_k^L+1/2)},\qquad\text{with}\ c_k\in\C.
\end{equation}
Then, thanks to \eqref{eq:bc0-2pi} we have
\begin{equation}\label{eq:def.a,b_-}
\begin{split}
a_{k,-}&:= c_k e^{-i2\pi\lambda_k^L}\left(i\cosh(\alpha) e^{-i\omega/2(\lambda_k^L+1/2)}-\eta_k\sinh(\alpha) e^{i\omega/2(\lambda_k^L+1/2)}\right),\\
b_{k,-}&:=c_k e^{i2\pi\lambda_k^L}\left(\eta_k\cosh(\alpha) e^{i\omega/2(\lambda_k^L+1/2)}-i\sinh(\alpha) e^{-i\omega/2(\lambda_k^L+1/2)}\right).
\end{split}
\end{equation}
Finally, let us write $c_k=\rho_k e^{i\varphi_k}$, for $\rho_k>0$ and $\varphi_k\in[0,2\pi)$, and set
\begin{equation}\label{eq:def:rho_k}
\begin{split}
\rho_k:&=\left(|a_{k,+}|^2+|b_{k,+}|^2+|a_{k,-}|^2+|b_{k,-}|^2\right)^{-1/2}\\
&=\left[\cosh(\alpha)
	\left(\cosh(\alpha)+\sinh(\alpha)
		\eta_k^L\sin\left(\omega(\lambda_k^L+1/2)\right)\right)
		\right]^{-1/2};
\end{split}
\end{equation}
At this point, arguing as before, we conclude that $\seq{f_k^L}_{k\in\Z}$ is a basis of $\H^L$.
To conclude the proof, we only need to determine $\varphi_k$ in order to verify \eqref{eq:sigma.er.f_k}. Since $-\lambda_k^L-1=\lambda_{-k-1}$, we have that 
\[
\dotprod{\sigma}{e_r} f^L_{k,\pm}(\theta)=
\begin{pmatrix}
b_{k,\pm}e^{i\lambda^L_{-k-1}\theta}\\
a_{k,\pm} e^{-i\lambda^L_{-k-1}\theta}
\end{pmatrix}.
\]
Since $\eta_k=-\eta_{-k-1}$ and $\rho_k=\rho_{-k-1}$, we have that $a_{-k-1,\pm}=b_{k,\pm}$ if and only if 
$-\eta_k e ^{i\varphi_{-k-1}}=i e ^{i\varphi_{k}}$.
Since $\eta_k=e^{i(1-\eta_k)\pi/2}$ and $i=e^{i\pi/2}$ we can conclude the proof setting $\varphi_k=\eta_k/4$.
\end{proof}
\begin{proposition}\label{propr:Charge u_k}
Let $u_k$ be defined as in \Cref{def:f_k} and let $C$ be the charge conjugation operator defined in \eqref{eq:def.ChargeConj}. Then 
\begin{equation}\label{eq:Cu_k}
C u_k= u_k.
\end{equation}
\end{proposition}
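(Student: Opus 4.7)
The plan is to reduce the statement $Cu_k=u_k$ to a pointwise angular identity on $f_k(\theta)$, and then verify it by direct algebraic manipulation of the coefficients appearing in \Cref{def:f_k}. Concretely, since $\phi(r/\rho)$ is real-valued, $r>0$, and the exponents $\lambda_k$ are real, complex conjugation commutes with the radial prefactor and
\[
Cu_k(r,\theta)=\sigma_1\overline{u_k(r,\theta)}=\phi(r/\rho)\,r^{\lambda_k}\,\sigma_1\overline{f_k(\theta)}.
\]
Thus I would reduce \eqref{eq:Cu_k} to the identity $\sigma_1\overline{f_k(\theta)}=f_k(\theta)$ for every $\theta$, in both the quantum-dot and Lorentz-scalar cases.

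For the quantum-dot case this is essentially automatic: starting from
$f_k^Q(\theta)=(2\omega)^{-1/2}\bigl(e^{i\lambda_k^Q\theta},e^{-i\lambda_k^Q\theta}\bigr)^{T}$
with $\lambda_k^Q\in\R$, complex conjugation swaps the two exponentials, and $\sigma_1$ swaps the two components back, so $\sigma_1\overline{f_k^Q}=f_k^Q$ holds entry-wise.

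For the Lorentz-scalar case I would work component-wise. Writing
$f_{k,\pm}^L(\theta)=\bigl(a_{k,\pm}e^{i\lambda_k^L\theta},b_{k,\pm}e^{-i\lambda_k^L\theta}\bigr)^{T}$
with the coefficients $a_{k,\pm},b_{k,\pm}$ from the proof of \Cref{lem:basis-eigen}, and using again that $\lambda_k^L\in\R$, the identity $\sigma_1\overline{f_{k,\pm}^L}=f_{k,\pm}^L$ reduces to the pair of scalar equalities $\overline{b_{k,\pm}}=a_{k,\pm}$ (and the conjugate relation, which is equivalent). The identity on the $+$ side collapses to a single condition on $c_k^L$ of the form $\eta_k^L c_k^L=\pm i\,\overline{c_k^L}$, and this is exactly what the phase choice $c_k^L=|c_k^L|e^{i\eta_k^L\pi/4}$ was engineered to produce in the proof of \Cref{lem:basis-eigen}: since $(\eta_k^L)^2=1$, the ratio $c_k^L/\overline{c_k^L}=e^{i\eta_k^L\pi/2}=i\eta_k^L$ makes the required scalar equality hold. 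The $-$ side is then obtained by the same manipulation, noting additionally that $\cosh(\alpha),\sinh(\alpha)\in\R$ so that conjugating $a_{k,-},b_{k,-}$ only affects the factors $c_k^L$, $e^{\pm i\omega(\lambda_k^L+1/2)/2}$ and $e^{-2i\pi\lambda_k^L}$, all of which combine again via the same identity on $c_k^L/\overline{c_k^L}$.

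The main obstacle is purely bookkeeping of phase factors: no new idea beyond the explicit choice of $\varphi_k=\eta_k^L\pi/4$ made in \Cref{lem:basis-eigen} enters. The proposition is essentially a compatibility check between the phase convention fixed for $c_k^L$ and the symmetry $C=\sigma_1\,\overline{\,\cdot\,}$; once one writes out the four scalar identities $\overline{b_{k,\pm}}=a_{k,\pm}$, they all reduce, after cancellation of the common $|c_k^L|$ and the real hyperbolic factors, to the single algebraic statement $(c_k^L)^2/|c_k^L|^2=i\eta_k^L$.
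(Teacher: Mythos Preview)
Your proposal is correct and follows essentially the same route as the paper: reduce to the pointwise angular identity $\sigma_1\overline{f_k(\theta)}=f_k(\theta)$, note that the quantum-dot case is immediate, and for the Lorentz-scalar case rewrite the identity as $\overline{b_{k,\pm}}=a_{k,\pm}$ and verify it using the phase convention $c_k^L=|c_k^L|\,e^{i\eta_k^L\pi/4}$ fixed in the proof of \Cref{lem:basis-eigen}. The paper's own proof does exactly this, only more tersely (it simply states that ``arguing as above one can see that $\overline{a_{k,\pm}}=b_{k,\pm}$''), so your write-up is in fact a bit more explicit about how the phase choice enters.
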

\begin{proof}
We start computing
\[
C u_k(r,\theta)=
\phi(r)r^{\lambda_k}\sigma_1 \overline{f_k(\theta)},
\]
then it remains to prove $\sigma_1 \overline{f_k(\theta)}=f_k(\theta)$.
For the quantum-dot case, it is trivial. Let us consider the Lorentz scalar case. Then:
\[
\sigma_1 \overline{f_{k,\pm}(\theta)}=
\begin{pmatrix}
\overline{b_{k,\pm}}e^{i\lambda_k^L\theta}\\
\overline{a_{k,\pm}}e^{-i\lambda_k^L\theta}\\
\end{pmatrix},
\]
where $a_{k,\pm}$ and $b_{k,\pm}$ are defined in \eqref{eq:def.a,b_+} and \eqref{eq:def.a,b_-}, with $c_k=\rho_k e ^{i\eta_k/4}$ and $\rho_k>0$ is defined in \eqref{eq:def:rho_k}.
Arguing as above one can see that $\overline{a_{k,\pm}}=b_{k,\pm}$ and this concludes the proof.
\end{proof}

\section{Straightening of a curvilinear wedge}
\label{sec:coordinate-transf}
Throughout this section, we consider a domain that is bounded by a pair of semi-infinite curves of class $C^2$, intersecting at an angle $\omega$ at the origin.
We assume that the tangent and curvature have left and right limits at the origin.
Up to interchanging the interior and exterior, we can assume that $\omega \in (0, 2 \pi)$.  
Contrary to the previous convention, in this appendix, we take $y$-axis oriented along the bisector of $W_\omega$. We orient $\Omega$ in the same way, with the angle of opening $\omega$ at the origin. Then, we assume that $\Omega$ is bounded by a pair of semi-infinite curves that admit a parametrization $(x,c(x))$ for $x \ge 0$ and $(x,c(x))$, for $x \le 0$ respectively.
The border of $W_\omega$ is parametrized by $\left(x, \tfrac{\abs{x}}{\tan (\omega/2)}\right)$. 

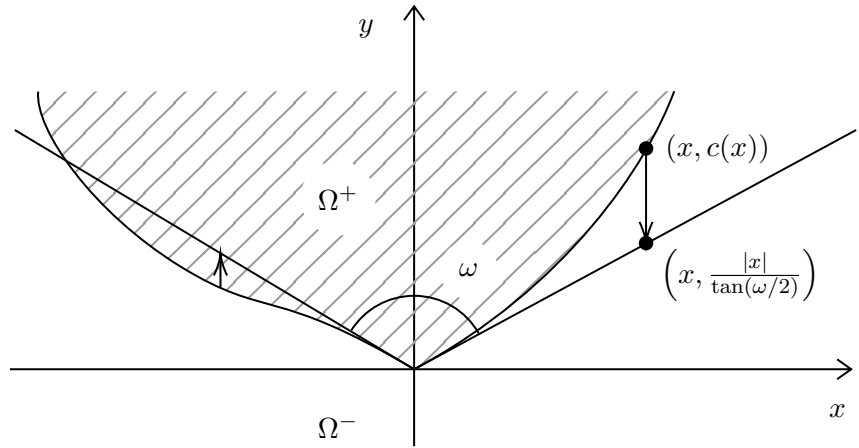
\begin{figure}[H]
\begin{centering}

 
\tikzset{
pattern size/.store in=\mcSize, 
pattern size = 5pt,
pattern thickness/.store in=\mcThickness, 
pattern thickness = 0.3pt,
pattern radius/.store in=\mcRadius, 
pattern radius = 1pt}
\makeatletter
\pgfutil@ifundefined{pgf@pattern@name@_qf0uwwjre}{
\pgfdeclarepatternformonly[\mcThickness,\mcSize]{_qf0uwwjre}
{\pgfqpoint{0pt}{0pt}}
{\pgfpoint{\mcSize+\mcThickness}{\mcSize+\mcThickness}}
{\pgfpoint{\mcSize}{\mcSize}}
{
\pgfsetcolor{\tikz@pattern@color}
\pgfsetlinewidth{\mcThickness}
\pgfpathmoveto{\pgfqpoint{0pt}{0pt}}
\pgfpathlineto{\pgfpoint{\mcSize+\mcThickness}{\mcSize+\mcThickness}}
\pgfusepath{stroke}
}}
\makeatother
\tikzset{every picture/.style={line width=0.75pt}} 
 \usetikzlibrary{patterns}

\begin{tikzpicture}[x=0.75pt,y=0.75pt,yscale=-1,xscale=1]

\draw  (137.44,220.27) -- (557.35,220.27)(339.19,37.7) -- (339.19,258.97) (550.35,215.27) -- (557.35,220.27) -- (550.35,225.27) (334.19,44.7) -- (339.19,37.7) -- (344.19,44.7)  ;
\draw [pattern=_qf0uwwjre,pattern size=11.25pt,pattern thickness=0.75pt,pattern radius=0pt, pattern color={rgb, 255:red, 155; green, 155; blue, 155}]   (151.55,80.6) .. controls (149.55,99.6) and (178.97,141.59) .. (222.55,168.6) .. controls (266.13,195.61) and (268.46,177.94) .. (339.19,220.27) .. controls (415.12,179.27) and (455.48,117.36) .. (469.02,80.41) ;

\draw    (139.88,99.93) -- (339.19,220.27) ;

\draw    (339.19,220.27) -- (559.83,99.93) ;

\draw    (454.96,109.23) -- (454.96,155.1) ;
\draw [shift={(454.96,157.1)}, rotate = 270] [color={rgb, 255:red, 0; green, 0; blue, 0 }  ][line width=0.75]    (10.93,-3.29) .. controls (6.95,-1.4) and (3.31,-0.3) .. (0,0) .. controls (3.31,0.3) and (6.95,1.4) .. (10.93,3.29)   ;

\draw    (242.53,179.45) -- (242.53,164.17) ;
\draw [shift={(242.53,162.17)}, rotate = 450] [color={rgb, 255:red, 0; green, 0; blue, 0 }  ][line width=0.75]    (10.93,-3.29) .. controls (6.95,-1.4) and (3.31,-0.3) .. (0,0) .. controls (3.31,0.3) and (6.95,1.4) .. (10.93,3.29)   ;

\draw  [draw opacity=0] (307.86,200.69) .. controls (314.38,190.26) and (325.98,183.32) .. (339.19,183.32) .. controls (353,183.32) and (365.04,190.9) .. (371.38,202.13) -- (339.19,220.27) -- cycle ; \draw   (307.86,200.69) .. controls (314.38,190.26) and (325.98,183.32) .. (339.19,183.32) .. controls (353,183.32) and (365.04,190.9) .. (371.38,202.13) ;
\draw  [fill={rgb, 255:red, 0; green, 0; blue, 0 }  ,fill opacity=1 ] (451.84,109.23) .. controls (451.84,107.5) and (453.24,106.1) .. (454.96,106.1) .. controls (456.69,106.1) and (458.09,107.5) .. (458.09,109.23) .. controls (458.09,110.95) and (456.69,112.35) .. (454.96,112.35) .. controls (453.24,112.35) and (451.84,110.95) .. (451.84,109.23) -- cycle ;
\draw  [fill={rgb, 255:red, 0; green, 0; blue, 0 }  ,fill opacity=1 ] (451.84,156.83) .. controls (451.84,155.1) and (453.24,153.7) .. (454.96,153.7) .. controls (456.69,153.7) and (458.09,155.1) .. (458.09,156.83) .. controls (458.09,158.55) and (456.69,159.95) .. (454.96,159.95) .. controls (453.24,159.95) and (451.84,158.55) .. (451.84,156.83) -- cycle ;

\draw (490.74,109.23) node   {$( x,c( x))$};
\draw (501.96,173.09)node   {$\left( x,\frac{|x|}{\tan( \omega /2)}\right)$};
\draw (550.55,241.11) node   {$x$};
\draw (315.35,49.1) node   {$y$};
\draw  [draw opacity=0][fill={rgb, 255:red, 255; green, 255; blue, 255 }  ,fill opacity=1 ]  (366.5, 170.43) circle [x radius= 14.6, y radius= 14.6]   ;
\draw (366.5,170.43) node   {$\omega $};
\draw  [draw opacity=0][fill={rgb, 255:red, 255; green, 255; blue, 255 }  ,fill opacity=1 ]  (301.19, 134.22) circle [x radius= 18.77, y radius= 18.77]   ;
\draw (301.19,134.22) node   {$\Omega ^{+}$};
\draw  [draw opacity=0][fill={rgb, 255:red, 255; green, 255; blue, 255 }  ,fill opacity=1 ]  (301.19, 248.74) circle [x radius= 18.4, y radius= 18.4]   ;
\draw (301.19,248.74) node   {$\Omega ^{-}$};

\end{tikzpicture}
\caption{The domain $\Omega$ and the wedge $W_\omega$.}
\end{centering}
\end{figure}

Consider the coordinate transformation $S:(x, y ) \in \Omega\mapsto S(x,y) \in W_\omega$ defined by
\[
S(x,y) = \left(x, y - c(x) + \frac{\abs{x }}{ \tan(\omega/2)} \right).
\] 
Since the boundary of $\Omega$ is $C^2$ except at the origin, where it is tangent to the wedge, 
\[
 \abs*{ c'(x) - \frac{\sign (x )}{\tan (\omega/2)}} \le \abs{x} \sup_{\R \setminus \{0\}} \abs{c''(x)}. 
\] 
The Jacobian matrix of $S$ is
\[
J(x, y) = 	\begin{pmatrix}  
			1 & 0 \\
 			- c'(x) + \frac{\sign (x )}{\tan(\omega/2)} & 1	
		\end{pmatrix}
 = \id  + O(\abs{x}).
\]
The relative angle of the rotation of the boundary tangent is
\[
\delta(x ) 
\equiv \sign (x) \arctan (1/c'(x)) - \omega /2)
\quad\text{and}\quad
|\delta(x)| \sim \abs{x}.
\]
Now, for $u = (u_+, u_-) \in L^2(W_\omega)\times L^2(W_\omega^c) $, we define $U_S u  \in L^2(W_\omega)\times L^2(W_\omega^c) $ by
\[ 
(U_S u)_\pm(x,y) :=
e^{i\delta(x)\sigma_3/2}u_\pm (S(x,y))=
\begin{pmatrix}
e^{i\delta(x)/2}&0\\
0&e^{-i\delta(x)/2}
\end{pmatrix} u_\pm (S(x,y)). 
\] 
One checks that
\[
e^{- i \delta \sigma_3} \begin{pmatrix} 0 & e^{-i \omega/2} \\ e^{i \omega/2} & 0 \end{pmatrix} e^{ i \delta \sigma_3} = \begin{pmatrix} 0 & e^{-i(\omega/2 + \delta)} \\ e^{i (\omega/2 + \delta)} & 0 \end{pmatrix}.
\]
If $u_\pm $ have boundary traces that satisfy Lorentz-scalar boundary conditions at the boundary of $\bdry$, we have that
\[\begin{split}
& \left(- i \bvec \sigma \cdot \bvec n_{W_\omega} + \mu \sigma_3 \right)  \gamma (U_S u)_+ 
-\left(- i \bvec \sigma \cdot \bvec n_{W_\omega} - \mu \sigma_3 \right) \gamma (U_S u)_-  \\
&\,= e^{ i \delta(x)  \sigma_3 / 2} \left(  \left(- i \bvec \sigma \cdot \bvec n_{\Omega} + \mu \sigma_3 \right)  \gamma u_+(S(x,y)) 
-\left(- i \bvec \sigma \cdot \bvec n_{\Omega} - \mu \sigma_3 \right) \gamma  u_- (S(x,y)) \right) =0.
\end{split}\]
We can also compute
\[\begin{split}
H U_S( u) =& e^{ i \delta(x)  \sigma_3} U_S(H u) -i \left(- c'(x) + \frac{\sign (x)}{\tan(\omega/2)} \right) \sigma_1 e^{ i \delta(x)  \sigma_3/2} \partial_y u  \circ S \\
&+ (\delta'(x)/2 ) \sigma_1e^{ i \delta(x)  \sigma_3/2} .
\end{split}\]
Then, expanding the first exponential around $x =0$, we have
\[
U_S^* H U_S=
H+
(e^{ i \delta(x)  \sigma_3}-1) H
-i \left(- c'(x) + \frac{\sign (x)}{\tan(\omega/2)} \right) \sigma_1 e^{ i \delta(x)  \sigma_3} \partial_y
+ (\delta'(x)/2 ) \sigma_1e^{ i \delta(x)  \sigma_3}.
\]
We recover \eqref{eq:def_perturbation}, setting
\begin{equation}\label{eq:def-L-M}
\begin{split}
L_1&:=-i(e^{ i \delta(x)  \sigma_3}-1)\sigma_1,\\
L_2&:=-i(e^{ i \delta(x)  \sigma_3}-1)\sigma_2-i \left(- c'(x) + \frac{\sign (x)}{\tan(\omega/2)} \right) \sigma_1 e^{ i \delta(x)  \sigma_3} ,\\
M&:=(\delta'(x)/2 ) \sigma_1e^{ i \delta(x)  \sigma_3}.
\end{split}
\end{equation}

\section*{Acknowledgements}
We would like to thank Luis Vega for the enlightening discussions. 
This work was partially developed while F.~P.~was employed at \emph{BCAM - Basque Center for Applied Mathematics}, 
and he was supported by
ERCEA Advanced Grant 2014 669689 - HADE, by the MINECO project MTM2014-53850-P, by
Basque Government project IT-641-13 and also by the Basque Government through the BERC
2018-2021 program and by Spanish Ministry of Economy and Competitiveness MINECO: BCAM
Severo Ochoa excellence accreditation SEV-2017-0718. 
He has also has received funding from
the European Research Council (ERC) under the European Union’s Horizon 2020
research and innovation programme (grant agreement MDFT No 725528 of Mathieu Lewin).
The work of H.~VDB.~has been partially supported by CONICYT (Chile) through PCI Project REDI170157, Fondecyt Projects \# 318--0059 and \# 118--0355, and Grant PIA AFB-170001.

\end{document}